\DeclareMathOperator*{\esssup}{ess\,sup}
\newtheorem{theorem}{Theorem}[section]
\newtheorem{corollary}{Corollary}
\newtheorem{lemma}[theorem]{Lemma}
\newtheorem{proposition}{Proposition}[section]
\theoremstyle{definition}
\newcommand{\japvi}{\left\langle v \right\rangle_i}
\newcommand{\japvj}{\left\langle v \right\rangle_j}
\newcommand{\japyj}{\left\langle y \right\rangle_j}
\newcommand{\japvip}{\left\langle v' \right\rangle_i}
\newcommand{\japvjps}{\left\langle v'_* \right\rangle_j}
\newcommand{\lk}{L_k^1}
\newcommand{\lpki}{L_{k,i}^p}
\newcommand{\lnula}{L_0^1}
\newcommand{\ldva}{L_2^1}
\newcommand{\ldvak}{L^2_k}
\newcommand{\ljedk}{L_k^1}
\newcommand{\lnminus}{L^1_{N-2}}
\newcommand{\lpk}{L_k^p}
\newcommand{\lpnula}{L_0^p}
\newcommand{\lplam}{L_\lambda^p}
\newcommand{\ldvanula}{L_0^2}
\newcommand{\lrnula}{L_0^r}
\newcommand{\lqlam}{L_\lambda^q}
\newcommand{\linfnula}{L_0^\infty}
\newcommand{\lpcomp}{p}
\newcommand{\lqcomp}{q}
\newcommand{\lrcomp}{r}
\newcommand{\lrpcomp}{r'}
\newcommand{\linfcomp}{\infty}
\newcommand{\lonecomp}{1}
\newcommand{\lppcomp}{p'}
\newcommand{\linfp}{\infty,+}
\newcommand{\lrp}{r,+}
\newcommand{\lpp}{p,+}
\newcommand{\lqp}{q,+}
\newcommand{\F}{\mathbb{F}}
\newcommand{\G}{\mathbb{G}}
\newcommand{\Q}{\mathbb{Q}(\F,\F)}
\newcommand{\QFG}{\mathbb{Q}(\F,\G)}
\newcommand{\QFGp}{\mathbb{Q}^+(\F,\G)}
\newcommand{\QFGm}{\mathbb{Q}^-(\F,\G)}
\newcommand{\QFGptil}{\mathbb{Q}^+(\tilde{\F},\tilde{\G})}
\newcommand{\Qp}{\mathbb{Q}^+(\F, \F)}
\newcommand{\Qm}{\mathbb{Q}^-(\F, \F)}
\newcommand{\QQp}{\mathbb{Q}^+}
\newcommand{\Qpone}{\mathbb{Q}^+_1(\F, \F)}
\newcommand{\Qpinf}{\mathbb{Q}^+_\infty(\F, \F)}
\newcommand{\nFlpk}{\left\|\mathbb{F}\right\|_{\lpk}}
\newcommand{\md}{\mathrm{d}}
\newcommand{\bb}{\left\| b_{ij} \right\|_{L^\infty(\mathbb{S}^{N-1})} }
\newcommand{\bbjed}{\left\| b_{ij} \right\|_{L^1(\mathbb{S}^{N-1};\mathrm{d}\sigma)} }
\newcommand{\bbdvaepsjed}{\left\| b_{ij}^{\varepsilon_{ij},2} \right\|_{L^1(\mathrm{d}\sigma)} }
\newcommand{\E}{E_{v v'}}
\newcommand{\p}{P_{r_{ij}}(v,v')}
\newcommand{\K}{K_i[\G](v,x)}
\newcommand{\Kg}{K_{\gamma_{ij},i}[\G](v,x)}
\newcommand{\Kn}{K_{0,i}[\G](v,x)}
\newcommand{\KNmdva}{K_{N-2,i}[\G](v,x)}
\newcommand{\Qpg}{Q^+_{\gamma_{ij},ij}}
\newcommand{\Qpn}{Q^+_{0,ij}}
\newcommand{\QpNmdva}{Q^+_{N-2,ij}}
\newcommand{\ko}{k_0}
\newcommand{\kN}{k_N}
\newcommand{\opP}{\mathscr{P}_{ij}(\varphi, \chi)}
\newcommand{\opPs}{\mathscr{P}_{ij}(\varphi_p^*, \chi_q^*)}
\newcommand{\opPsinf}{\mathscr{P}_{ij}(\varphi_\infty^*, \chi_\infty^*)}
\newcommand{\opB}{\mathscr{B}_{ij}(\varphi, \chi)}
\newcommand{\opBs}{\mathscr{B}_{ij}(\widetilde{\varphi_p^*},\widetilde{\chi_q^*})}
\newcommand{\CopPr}{C_{p,q,r}^{\mathscr{P}_{ij}}}
\newcommand{\CopPinf}{C_{\infty}^{\mathscr{P}_{ij}}}
\newcommand{\CopPinftri}{C_{\infty,\infty,\infty}^{\mathscr{P}_{ij}}}
\newcommand{\CopBr}{C_{p,q,r}^{\mathscr{B}_{ij}}}
\newcommand{\CopBinf}{C_{\infty}^{\mathscr{B}_{ij}}}
\newcommand{\CopBinftri}{C_{\infty,\infty,\infty}^{{\mathscr{B}}_{ij}}}
\newcommand{\CopQr}{C_{p,q,r}^{\QQp}}
\newcommand{\CopQinf}{C_{\infty}^{\QQp}}
\newcommand{\CopQone}{C^{\QQp}_{1}}
\newcommand{\CopQonetri}{C^{\QQp}_{1,1,1}}
\newcommand{\CijQp}{C^{ij}_{p,q,r}}
\newcommand{\CijQppppinf}{C^{ij}_{p,p',\infty}}
\newcommand{\Cijtwo}{C^{\mathscr{P}_{ij}}_{r',\infty,r'}}
\newcommand{\Cijthree}{C^{\mathscr{P}_{ij}}_{\infty,r',r'}}
\newcommand{\Cijfive}{C^{\mathscr{P}_{ij}}_{\infty,1,1}}
\newcommand{\Cijsix}{C^{\mathscr{P}_{ij}}_{1,\infty,1}}
\newcommand{\Cijfour}{\left(C^{\mathscr{P}_{ij}}_{1,\infty,1}\right)^{1/p}\left(C^{\mathscr{P}_{ij}}_{\infty,1,1}\right)^{1/p'} }
\newcommand{\Cijroner}{C^{\QQp}_{p,1,p}}
\newcommand{\mdx}{\md \xi_N^{b_{ij}}(s)}
\newcommand{\gij}{\gamma_{ij}}
\newcommand{\lij}{{\lambda_{ij}}}
\newcommand{\Mij}{ M_{ij}}
\newcommand{\clb}{c_{lb}}
\newcommand{\ct}{\tilde{c}}
\newcommand{\expLoneLpFzero}{\left\| \F_0 e^{\alpha_0 \langle \cdot \rangle^{s}}\right\|_{(\lnula \cap \lpnula)}}
\newcommand{\expLpF}{\left\| \F e^{\alpha \langle \cdot \rangle^{s}}\right\|_{\lpnula}}
\newcommand{\expLoneF}{\left\| \F e^{\tilde{\alpha} \langle \cdot \rangle^{s}}\right\|_{\lnula}}
\newcommand{\expLinfF}{\left\| \F e^{\alpha \langle \cdot \rangle^{s}}\right\|_{\linfnula}}
\newcommand{\bone}{b_{ij}^{\varepsilon_{ij},1}}
\newcommand{\btwo}{b_{ij}^{\varepsilon_{ij},2}}
\title[]
      {Propagation of $L^p_{\beta}$-norm, $ 1<p\le \infty$, for the system of Boltzmann equations for monatomic gas mixtures}
\author[Erica de la Canal, Irene M. Gamba and Milana Pavi\'c-\v Coli\'{c}]{}
\begin{document}
\maketitle


\centerline{\scshape Erica de la Canal}
\medskip
{\footnotesize
	\centerline{Department of Mathematics}
	\centerline{University of Texas at Austin}
	\centerline{2515 Speedway Stop C1200
		Austin, Texas 78712-1202}
	
}

\medskip

\centerline{\scshape Irene M. Gamba}
\medskip
{\footnotesize
	\centerline{Department of Mathematics}
	\centerline{University of Texas at Austin}
	\centerline{2515 Speedway Stop C1200
		Austin, Texas 78712-1202}
	
}

\medskip

\centerline{\scshape Milana Pavi\'c-\v Coli\'{c}}
\medskip
{\footnotesize
  \centerline{Department of Mathematics and Informatics}
  \centerline{Faculty of Sciences, University of Novi Sad}
 \centerline{Trg Dositeja Obradovi\'ca 4, 21000 Novi Sad, Serbia}
}

\bigskip

\centerline{\today}

\bigskip


\begin{abstract} \justify
With the existence and uniqueness of a vector value solution for the full non-linear homogeneous Boltzmann system of equations describing multi-component monatomic gas mixtures for binary interactions proved \cite{IG-P-C}, we present in this manuscript several properties for such a solution. We start by proving the gain of integrability of the gain term of the multispecies collision operator, extending the work done previously for the single species case \cite{IG-Alonso}. In addition, we study the integrability properties of the multispecies collision operator as a bilinear form, revisiting and expanding the work done for a single gas \cite{IG-Alonso-Carneiro}. With these estimates, together with a control by below for the loss term of the collision operator as in \cite{IG-P-C}, we develop the propagation for the polynomially and exponentially  $\beta$-weighted $L^p_\beta$-norms for the vector value solution.  Finally, we extend such    $L^p_\beta$-norms propagation property to $p=\infty$.

\end{abstract}


\tableofcontents

\section{Introduction}

We consider a mixture of $I$ monatomic gases in a space homogeneous setting. Each component of the mixture can be statistically described by its distribution function $f_i:=f_i(t,v)$, depending on time $t \geq 0$ and velocity of molecules $v \in \mathbb{R}^N$, that changes due to binary interactions with other particles of the same or different species. For an $i$ fixed, each $f_i$ solves a Boltzmann type equation where the collision operator takes into account not only the influence of particles of the same species, but all other species. Since we are considering all species simultaneously, we introduce a vector valued set of distribution functions $\F := [f_i]_{1\leq i \leq I}$, whose change due to binary collisions of particles is expressed by a vector of collision operators, with its $i-$th component given by $[\mathbb{Q}(\F)]_{i}:= \sum_{j=1}^I Q_{ij}(f_i, f_j)$. Then, the evolution of a mixture is leaded by  a system of Boltzmann equations.\\

The existence and uniqueness of the solution for the non-linear system of spatially homogeneous Boltzmann equations for multi-species mixtures with binary interactions has been proven recently in \cite{IG-P-C}, in a vector valued Banach space with a norm depending on the species mass fractions, to be defined in the next sections. That result is obtained by following general ODE theory in Banach spaces that studies the evolution of such vector valued solutions with their suitable norm, without requesting entropy boundedness. Such normed spaces provided estimates to rigorously prove the generation and propagation of scalar polynomial and exponential moments of the vector valued $\F$.   \\

The innovative tools and results at the core of this manuscript are the following: first the introduction of suitable $L^p_\beta$ vector valued spaces whose norms have dependence on specific weights depending on the species mass fractions and a new explicit Carleman integral representation for the positive part of the collisional operator, associated to a binary interaction of two different species with different masses. Next, by means of the new Carleman representation and the vector valued  $L^p_\beta$-spaces associated to the multi-species system, we obtain a gain of integrability estimate for the positive contribution  of the collisional form that provides explicit constants rates. Last, we show that gain of integrability estimates prove the propagation of the $L^p$ norms with polynomial and exponential weights of the vector valued solution of the Boltzmann system of equations for mixtures.

%

The techniques used in this manuscript are extensions or adaptations of results developed for the scalar Boltzmann equations. But since  in the mixture framework each component of the mixture is characterized by the molecular mass $m_i$, the symmetry properties of the binary collisions are no longer valid, which yields to changes in the mathematical treatment.\\

It is noteworthy that the gain of integrability estimates essentially follows, after the generalization to the multi-species problem, the strategy devised in \cite{IG-Alonso}, which consists in first showing the control of a weighted $L^2$ norm of the positive part of the collision operator of each species by a lower order weighted $L^2$ norm of the input vector value function in a suitable Banach space. After the  $L^2$ control, one can obtain a Young's type inequality for the gain term of the collision operator for mixture of gases by means of a  weak form of the positive term of the collisional operator as an extension of the original strategy developed in \cite{IG-Alonso-Carneiro} and a subsequent interpolation argument for the system. 

 This  extends the work in \cite{IG-Alonso-Carneiro} and \cite{IG-Alonso}, both proven for a single species Boltzmann equation. With this two estimates, we are able to state and prove the gain of $L^p$ integrability of the positive part of the collision operator $1<p<\infty$. In addition, we prove the propagation of $L^\infty$ norms, following the same approach as in \cite{IG-Alonso-Taskovic} applied to  the single component gas.

The paper is organized as follows. In section \ref{Preliminaries} we describe the kinetic model along with the notation used through the manuscript and state the main results of this work. In the same section we define the suitable Banach spaces needed to get our results and their notation and we state different ways to write the gain part of the collisional operator, both in the classical definition and the weak formulation. Then, in Section \ref{Section Ltwo} and \ref{Section Lp} we prove the gain of integrability for $L^2$ and $L^p$ polynomially weighted norms respectively. In section \ref{Section propagation} we prove the main results: the propagation of $L^p$ norms, both with polynomial and exponential weights, and in Section \ref{Section Linfty}  we extend this results for the case $p= \infty$. Finally, there is an Appendix with some calculations and statements needed.

\section{Preliminaries and Main results} \label{Preliminaries}
\subsection{The kinetic model for monatomic gas mixtures } \label{Section kinetic model}

Each component of the mixture, namely  $\mathcal{A}_i$ with $1\leq i \leq I$, is described with its own distribution function $f_i:=f_i(t, v)\geq 0$, that depends on time $t>0$ and particle velocity   $v \in \mathbb{R}^N$. In order to describe its evolution, we first model the  binary interaction of two colliding molecules.

\subsubsection{Collision process}
We fix two molecules of species $\mathcal{A}_i$ and $\mathcal{A}_j$ that are going to collide. Let molecule of species $\mathcal{A}_i$ have mass $m_i$ and velocity $v'$; and molecule of species $\mathcal{A}_j$,  mass $m_j$ and velocity $v'_*$ before the collision. After the collision, they belong to the same species, have the same mass, but their velocities changed to $v$ and $v_*$, respectively. During the elastic collision, conservation of momentum and kinetic energy hold, that is
\begin{equation}\label{CL micro}
\begin{split}
m_i v' + m_j v'_* &= m_i v + m_j v_*, \\
m_i \left| v' \right|^2 +  m_j \left| v'_* \right|^2 &= m_i \left| v \right|^2 +  m_j \left| v_* \right|^2.
\end{split}
\end{equation} 
Let $r_{ij}\in (0,1)$ be the mass contribution of the molecule of species $\mathcal{A}_i$ to the sum of masses of two colliding molecules of species $\mathcal{A}_i$ and $\mathcal{A}_j$, i.e. denote
\begin{equation}\label{rij}
r_{ij}:= \frac{m_i}{m_i + m_j} \quad \Rightarrow \quad r_{ji} :=  1-r_{ij}=\frac{m_j}{m_i + m_j}.
\end{equation}
Then equations \eqref{CL micro} can be parametrized with a parameter $\sigma \in \mathbb{S}^{N-1}$,
so that pre-collisional quantities can be written in terms of post-collisional ones as
\begin{equation}\label{collision transformation}
v' = v + (1-r_{ij}) (\left|u\right| \sigma - u ), \qquad v'_* = v_* -r_{ij}  (\left|u\right| \sigma - u ),
\end{equation}
where $u := v-v_*$ is the relative velocity. In other words,  $\sigma$ is in the direction of the pre-collisional relative velocity $u'=v'-v'_*$,
\begin{equation*}
u' = \left|u\right| \sigma.
\end{equation*} 

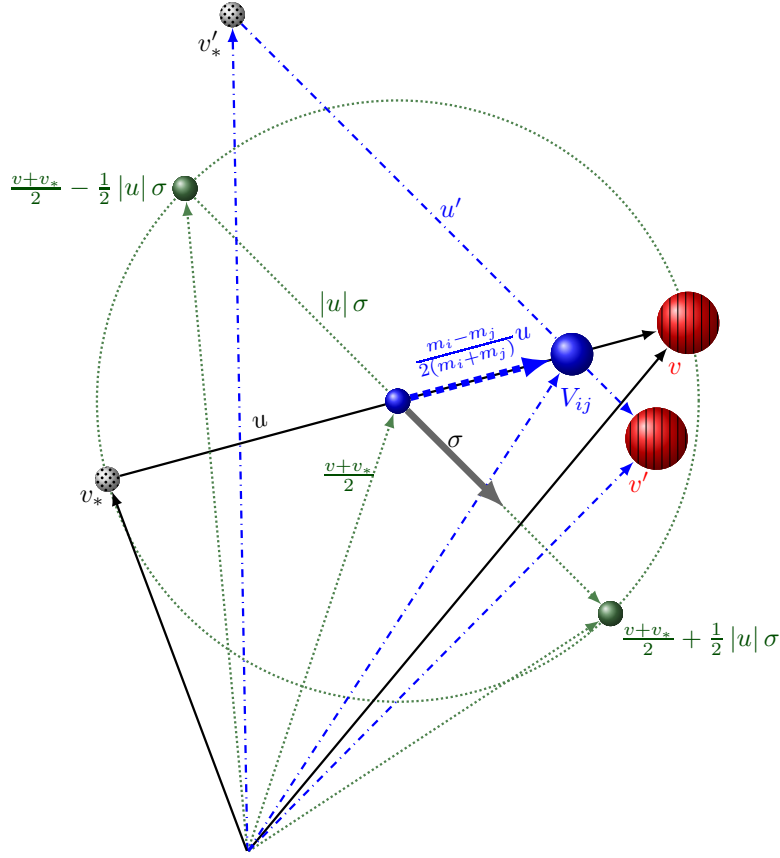
\begin{figure}
	\begin{center}
		\begin{tikzpicture}[scale=2]
		\node (v) at (1.93,0.52) [preaction={shade, ball color=red},pattern=vertical lines, pattern color=black,circle,scale=2.5] {};
		\node (vs) at (-1.93,-0.52) [preaction={shade, ball color=white!70!gray},circle,scale=1, ,pattern=crosshatch dots, pattern color=black] {};
		\node (V) at (1.158,0.312)  [circle,scale=1.7, shade,ball color=blue] {};
		\node (vp) at (1.72,-0.25)  [preaction={shade,ball color=red},circle,scale=2.5, ,pattern=vertical lines, pattern color=black] {};
		\node (vps) at (-1.1,2.57) [preaction={shade,ball color=white!70!gray}, circle,scale=1,pattern=crosshatch dots, pattern color=black ] {};
		\node (V old) at (0,0)	 [circle,scale=1, shade,ball color=blue] {};
		\node (start) at (-1,-3)  [circle,scale=0.1, gray] {};
		\node (vp old) at (1.414,-1.414)   [circle,scale=1, shade,ball color=green!30!black!70!white] {};
		\node (vps old) at (-1.414,1.414)   [circle,scale=1, shade,ball color=green!30!black!70!white] {};
		
		\begin{scope}[>=latex]
		\draw[->,line width=0.6ex,gray!80!black] (V old) -- (0.7071,-0.7071) node[midway, above,black] {$\sigma$};
		\draw[->,line width=0.2ex] (start) --  (vs) node[left,pos=0.98] {$v_*$};
		\end{scope}
		\begin{scope}[>=latex, on background layer]
		\draw[densely dotted,color=green!30!black!70!white,line width=0.2ex] (0,0) circle (2cm);
		\draw[densely dotted,->,color=green!30!black!70!white,line width=0.2ex] (vps old) -- (vp old) node[densely dotted,color=green!30!black,pos=0.3,above right=-0.1cm,line width=0.2ex] {$  \left|u \right| \sigma$};
		\draw[->,line width=0.2ex] (start) -- (v)  node[pos=0.96,right=0.1cm] {\color{red}$v$};
		\draw[->,line width=0.2ex] (vs) -- (v) node[pos=0.3,sloped, black,above left] {$u$};
		\draw[densely dotted,->,color=green!30!black!70!white,line width=0.2ex] (start) -- (V old) node[color=green!30!black,pos=0.9,below left=-0.1cm] {$\frac{v+v_*}{2}$};
		\draw[->,dashdotted,color=blue,line width=0.2ex]  (vps)--(vp) node[color=green!30!black,above right=-0.1cm,midway] {\color{blue}$u'$};
		\draw[densely dotted,->,color=green!30!black!70!white,line width=0.2ex] (start) -- (vp old) node[color=green!30!black,below right,line width=0.2ex] {$ \frac{v +  v_*}{2} + \frac{1}{2} \left|u \right| \sigma$};
		\draw[densely dotted,->,color=green!30!black!70!white,line width=0.2ex] (start) -- (vps old) node[color=green!30!black, left=0.1cm] {$ \frac{v +  v_*}{2} - \frac{1}{2} \left|u \right| \sigma$};
		\draw[->,densely dashed,blue,line width=0.6ex] (V old) --  (V) node[midway, sloped,above, blue] {$\frac{m_i-m_j}{2(m_i+m_j)}u$};
		\draw[dashdotted,->,color=blue,line width=0.2ex] (start) -- (V) node[pos=0.98,below right=-0.05cm] {\color{blue}$V_{ij}$};
		\draw[dashdotted,->,color=blue,line width=0.2ex] (start) -- (vps) node[pos=0.98,left] {\color{black}$v'_*$};
		\draw[dashdotted,->,color=blue,line width=0.2ex] (start) -- (vp) node[pos=0.98,below right=-0.1cm] {\color{red}$v'$};
		\end{scope}	
		\end{tikzpicture}
	\end{center}
	\caption{Illustration of the collision transformation, with notation $V_{ij}:=\frac{m_i v + m_j v_*}{m_i + m_j}$, $u:=v-v_*$,  $u':=v'-v'_*$. The displacement of the center of mass with respect to a single component elastic binary interaction is given by $(r_{ij}-\frac 12)u = \frac{m_i-m_j}{2(m_i+m_j)}u$, if $m_i>m_j$.  
		Solid lines denote vectors after collision, or given data. Dash-dotted vectors  represent primed (pre-collisional) quantities  that can be calculated from the given data, and compared to the case $m_i=m_j$, represented by dotted vectors. Dashed vector direction is the displacement along the direction of the relative velocity $u$ proportional to the half difference of relative masses, (which clearly vanishes for  $m_i = m_j$, reducing the model to a classical collision). Note that the scattering direction $\sigma$  is preserved as  the pre-collisional relative velocity $u'$ keeps the same magnitude as the post-collisional $u$,   $u'$ is parallel the reference elastic  pre-collisional relative velocity $|u|\sigma$. This figure is reproduced from \cite{IG-P-C}. }
	\label{picture}
\end{figure}

\subsubsection{The system of Boltzmann equations}

Since   the whole mixture is considered simultaneously, we are led to introduce a vector of distribution functions,
\begin{equation*}
\F = \left[f_i\right]_{1\leq i \leq I}.
\end{equation*}
The vector-value function $\F$ satisfies the system of Boltzmann equations,
\begin{equation}\label{BE}
\partial_t \F(t,v)=\Q(t,v), \quad t>0, \ v\in \mathbb{R}^N,
\end{equation}
where $\Q$ is a vector of multispecies collision operators whose $i-$th component is 
\begin{equation}\label{Boltzmann operator}
\left[\Q\right]_{i} = \sum_{j=1}^I Q_{ij}(f_i, f_j)(v),
\end{equation}
and $Q_{ij}$ is defined below.
\subsection{Pairwise collision operator $Q_{ij}$}
Let $f$ be the distribution function for the species $\mathcal{A}_i$ and let the distribution function $g$  be associated to the species  $\mathcal{A}_j$. The pairwise collision operator describing  the collisions of molecules of species $\mathcal{A}_i$ with the molecules of species $\mathcal{A}_j$ is defined as
\begin{equation}\label{collision operator}
Q_{ij}(f,g)(v)= \int_{\mathbb{R}^N} \int_{\mathbb{S}^{N-1}} \left( f(v') \, g(v'_*) - f(v) \, g(v_*)  \right) \,\mathcal{B}_{ij}(\left|u\right|,\hat{u}\cdot \sigma) \, \mathrm{d} \sigma \, \mathrm{d}v_*,
\end{equation}
where the pre-collisional velocities  are given by \eqref{collision transformation}, and $\hat{u}:=u/\left|u\right|$.

The other way around, we define the pairwise collision operator that describes collision of molecules of species $\mathcal{A}_j$ with the ones of species $\mathcal{A}_i$ 
\begin{equation}\label{collision operator ji}
Q_{ji}(g,f)(v)= \int_{\mathbb{R}^N} \int_{\mathbb{S}^{N-1}} \left( g(w') \, f(w'_*) - g(v) \, f(v_*)  \right) \,\mathcal{B}_{ji}(\left|u\right|,\hat{u}\cdot \sigma) \, \mathrm{d} \sigma \, \mathrm{d}v_*,
\end{equation}
where now velocities $w' $ and $w'_*$ differ from \eqref{collision transformation} by mass interchange $m_i \leftrightarrow m_j$,
\begin{equation}\label{collision transformation ji}
\begin{split}
w' &= v + (1-r_{ji}) (\left|u\right| \sigma - u ) = v + r_{ij} (\left|u\right| \sigma - u ) , \\
 w'_* &= v_* -r_{ji}  (\left|u\right| \sigma - u )  = v_* -(1-r_{ij})  (\left|u\right| \sigma - u ),
\end{split}
\end{equation}
after noting that $r_{ji} = 1-r_{ij}$ from definition \eqref{rij}.

The collision kernel associated to transition probabilities of exchanging states at an interaction,  $\mathcal{B}_{ij}$, is a positive a.e. measure  
 that satisfies the  micro-reversibility assumption given by the invariance  between the switching of   post- and pre-collisional velocities for a pair of interacting particles. 
That is, $\mathcal{B}_{ij}$ remains invariant  under the following exchange $(v,v_*,\sigma)\leftrightarrow (v',v'_*,\sigma')$, with $\sigma'=u/ \left|u\right|$, and $(v,v_*,\sigma)\leftrightarrow (v_*,v,-\sigma)$, resulting in the following identity 
\begin{equation}\label{cross-sym}
\mathcal{B}_{ij}(\left|u\right|,\hat{u}\cdot \sigma) = \mathcal{B}_{ij}(\left|u'\right|,\hat{u}'\cdot \sigma')=\mathcal{B}_{ji}(\left|u\right|,\hat{u}\cdot \sigma).
\end{equation}

In our scope, we only deal with the hard potential and integrable angular transition probability case  for each particle pair  $\mathcal{A}_i$ and $\mathcal{A}_j$, $1\leq i, j \leq I$. 
That means, the  terms $\mathcal{B}_{ij}$, $i,j=1,\dots,I$ are assumed to take the following form 
\begin{equation}\label{cross section}
\mathcal{B}_{ij}(\left|u\right|,\hat{u}\cdot \sigma) =\left|u\right|^{\gamma_{ij}} \, b_{ij}(\hat{u} \cdot \sigma), \ \gamma_{ij}\in (0,1], \ \text{and} \ b_{ij}(\hat{u} \cdot \sigma) \in L^1(\mathbb{S}^{N-1}),
\end{equation}
where $b_{ij}(\hat{u} \cdot \sigma)$ is the angular transition rate. We assume that each $b_{ij}$ has been symmetrized with respect to the polar angle $\theta$, and therefore its support lays in $[0,1]$.
\medskip

We note that this introductory part does not need the split in \eqref{cross section}. The general form of cross section $ \mathcal{B}_{ij}$ with the symmetries from \eqref{cross-sym} is enough to develop the new Carleman representation, to be shown in next sections. However, existence and uniqueness theory is built in \cite{IG-P-C} for the cross section \eqref{cross section}.

\subsubsection{Bilinear form of collision operator in the vector form notation} Once we defined the pairwise collision operator $Q_{ij}(f,g)$, we can introduce the vector value bilinear form of  multispecies collision operator $\QFG$. Let $\F$ and $\G$ be vectors of distribution functions $\F= \left[ f_i \right]_{i=1}^I$ and $\G= \left[ g_i \right]_{i=1}^I$. Then the collision operator associated to the distribution functions $\F$ and $\G$ is defined through its $i-$th component by
\begin{equation*}
\left[\QFG\right]_i(v):=  \sum_{j=1}^I Q_{ij}(f_i,g_j)(v), \qquad i=1,\dots,I.
\end{equation*}
Clearly, the Boltzmann operator \eqref{Boltzmann operator} is obtained for $\F=\G$.

\subsubsection{Gain and loss terms} 
When it is possible to separate the collision operator \eqref{collision operator} into the sum of  two operators (typical situation is the cut-off regime when the angular part of the cross section \eqref{cross section} is integrable), we are led to define the gain and the loss term.
Namely,  the first part of the collision operator \eqref{collision operator} is called the gain term, 
\begin{equation}\label{collision operator gain}
Q^{+}_{ij}(f,g)(v) = \int_{\mathbb{R}^N} \int_{\mathbb{S}^{N-1}} f(v') \, g(v'_*) \, \mathcal{B}_{ij}(\left|u\right|,\hat{u}\cdot \sigma) \, \mathrm{d} \sigma \, \mathrm{d}v_*,
\end{equation}
while the second part is called the loss term,
\begin{equation*}
Q_{ij}^-(f,g)(v)= f(v) \int_{\mathbb{R}^N} \int_{\mathbb{S}^{N-1}}  g(v_*)   \,\mathcal{B}_{ij}(\left|u\right|,\hat{u}\cdot \sigma) \, \mathrm{d} \sigma \, \mathrm{d}v_*,
\end{equation*}
so that \eqref{collision operator} can be rewritten as the difference of this two operators
\begin{equation*}
Q_{ij}(f,g)(v)= Q^+_{ij}(f,g)(v) - Q^-_{ij}(f,g)(v).
\end{equation*}
Passing to the vector notation, we define the vector value gain $\QFGp$ and loss $\QFGm$ terms. Namely, with the pairwise operators $Q^+_{ij}$ and $Q^-_{ij}$ defined above, the gain term $\QFGp$ is  defined as
\begin{equation}\label{gain term vector}
\left[\QFGp \right]_i= \sum_{j=1}^I Q_{ij}^+ (f_i, g_j) =  \sum_{j=1}^I  \int_{\mathbb{R}^N} \int_{\mathbb{S}^{N-1}} f_i(v') \, g_j(v'_*) \, \mathcal{B}_{ij}(\left|u\right|,\hat{u}\cdot \sigma) \, \mathrm{d} \sigma \, \mathrm{d}v_*,
\end{equation}
while the loss term $\QFGm$ is defined with 
\begin{equation*}
\left[\QFGm \right]_i= \sum_{j=1}^I Q_{ij}^- (f_i, g_j) = f_i(v) \sum_{j=1}^I   \int_{\mathbb{R}^N} \int_{\mathbb{S}^{N-1}}   g_j(v_*) \, \mathcal{B}_{ij}(\left|u\right|,\hat{u}\cdot \sigma) \, \mathrm{d} \sigma \, \mathrm{d}v_* ,
\end{equation*}
with $u=v-v_*$, $\hat{u}=u/\left|u\right|$.

\subsection{Functional spaces and integral representations for binary interactions of mixtures of monatomic gases} 

We will study the gain operator and introduce some essential concepts and notation for later studies.

\subsubsection{Functional spaces} \label{Section Functional spaces}
We are working in Banach spaces associated to the mixture, as well as to its components. Therefore we need to define the associated vector valued $L^p$ weighted spaces ($L^p_\beta$), where $\beta$ is  either a polynomial weight of order $k$ (where we will denote $\beta = k$), i.e.   $\beta\equiv \japvi^{k}$, or an exponential weight with rate $\alpha >0$ and order $s$,  for $0< s \le 1$, i.e. $\beta\equiv\text{exp}(\alpha\japvi^{s})$,   and their respective norms, where both weights depend on a renormalized mass $m_i$ for each specie mass density $f_i$ with $1\le i\le I$. 

For the case where $\beta$ is a polynomial weight of order $k$, the notation is drawn from the previous related  work  defined for the $L^1_k$ vector value functional space case for the system of Boltzmann equation for mixtures \cite{IG-P-C}, that  we extend here to $1< p\le \infty$.   
We point out that in the case of $L^1_k$-spaces these polynomial weighted norms depend on time and can be  viewed describing the time evolution of observables (or $L^1_k$-moments) associated to the vector valued probability densities. In addition, such evolution of norms was crucial to obtain a set of ordinary differential inequalities that enabled to show that  $L^1_{k,i}$-moments of each species were generated and propagated uniformly in time depending on the initial data, as much as to show these  $L^1_{k,i}$-moments were summable in $k$, for $k>k^*$, with $k^*$ a constant given in \eqref{k star} depending on $b_{ij}$ and $r_{ij}$, to obtain uniform in time estimates of exponential moments. 

Such  time dependent norms  are obtained from the weak formulation associated to the  system, which will be defined as follows.

Following the structural form of the $L^1_k$ Banach space introduced for the existence and uniqueness of the vector valued solutions for the Boltzmann system introduced in \cite{IG-P-C} we define the corresponding $L^p_k$ space 
\begin{equation*}
\lpk := \left\{ \F=\left[f_i\right]_{1\leq i \leq I}:  \sum_{i=1}^I \int_{\mathbb{R}^N} \left( \left|f_i(v) \right| \japvi^{k} \right)^p \mathrm{d}v< \infty, \ k\geq 0, 1 \leq p < \infty \right\},
\end{equation*}
Following the introduction of $L^1_k$ norms in \cite{IG-P-C}, we recall their definition of the Lebesgue weight $\japvi :=\sqrt{ 1+ \frac{m_i}{\sum_{i=1}^I m_i} \left| v \right|^2} $. We remark that the renormalization of the weight is a sufficient condition to obtain the energy identity \cite[Lemma 4]{IG-P-C}, which is essential to obtain Povzner estimates and propagation of $L^1_k$ norms.

The associated norm is then
\begin{equation}\label{norm definition}
\nFlpk := \left( \sum_{i=1}^I \int_{\mathbb{R}^N} \left( \left|f_i(v) \right|  \japvi^{k} \right)^p \mathrm{d}v \right)^{1/p}.
\end{equation}
For $p=\infty$, we define 
\begin{equation*}
L^\infty_k := \left\{ \F=\left[f_i\right]_{1\leq i \leq I}:  \sum_{i=1}^I \esssup_{v \in \mathbb{R}^N} \left( |f_i(v)|\japvi^{k} \right)< \infty, \ k\geq 0 \right\},
\end{equation*}
with its associated norm
\begin{equation*}
\| \F \|_{L^\infty_k} := \sum_{i=1}^I \esssup_{v \in \mathbb{R}^N} \left( |f_i(v)|\japvi^{k} \right).
\end{equation*}

We will also work in the components framework. We define the space of each mixture component as
\begin{equation*}
\lpki := \left\{ g: \int_{\mathbb{R}^N}\left( \left|g(v)\right|  \japvi^{k} \right)^p \mathrm{d}v < \infty, k\geq 0, 1\leq p < \infty \right\}, 
\end{equation*}
together with its norm
\begin{equation*} 
\left\| g \right\|_{\lpki} := \left( \int_{\mathbb{R}^N} \left( \left|g(v)\right| \japvi^{k} \right)^p \mathrm{d}v \right)^{1/p}.
\end{equation*}
When $p=\infty$ the space related to the specie $A_i$ is
\begin{equation*}
L^\infty_{k,i}:= \left\{ g: \esssup_{v \in \mathbb{R}^N} \left( |g(v)|\japvi^{k} \right)< \infty, \ k\geq 0 \right\}, 
\end{equation*}
with its norm
\begin{equation*}
\|g\|_{L^\infty_{k,i}} :=\esssup_{v \in \mathbb{R}^N} \left( |g(v)|\japvi^{k} \right).
\end{equation*}
Note that the norm of $\F$ in $\lpk$ is related to the norm of its components $f_i$ in the space $\lpki$ via 
\begin{equation*}
\left\| \F \right\|_{\lpk}^p = \sum_{i=1}^{I} \left\| f_i \right\|_{\lpki}^p, \quad \left\| \F \right\|_{L^\infty_{k}} = \sum_{i=1}^{I} \left\| f_i \right\|_{L^\infty_{k,i}}.
\end{equation*}
In the special case when $k=0$, we introduce the following notation for the norm
\begin{equation*}
\|g\|_{p}:= \|g\|_{L^p_{0,i}}, \quad 1\leq p \leq \infty,
\end{equation*}
for any $i=1,\dots,I$.

\

Now, when $\beta$ is an exponential weight with rate $\alpha >0$ and order $s$,  for $0< s \le 1$ we define the norm as
\begin{equation}
 \expLpF := \left( \sum_{i=1}^I \left\| f_i (t, \cdot) e^{\alpha \langle \cdot \rangle_i^s}\right\|^p_p \right)^{\frac{1}{p}} .
\end{equation}

\subsubsection{Weak form of the gain term}

Weak formulation of the pairwise collision operator plays a central role in all  further calculations. In this Section, we define it for the gain part of the collision operator.

We integrate the gain operator \eqref{collision operator gain} over the velocity space  against some suitable test function $\psi(v)$. After performing the change of variables  $(v,v_*,\sigma)\leftrightarrow (v',v'_*,\sigma')$, with $v, v'$ given in \eqref{collision transformation} and $\sigma'= \left|u\right|u$, we obtain the following weak form of the pairwise collision operator $Q_{ij}$ that corresponds to an interaction of species $\mathcal{A}_i$ with the species $\mathcal{A}_j$
\begin{equation}\label{weak form Qij}
\begin{split}
\int_{ \mathbb{R}^N} Q^{+}_{ij}(f,g)(v) \, \psi(v) \, \mathrm{d}v&= \int_{\mathbb{R}^N} \int_{\mathbb{R}^N} \int_{\mathbb{S}^{N-1}} f(v) \, g(v_*) \, \mathcal{B}_{ij}(\left|u\right|,\hat{u}\cdot \sigma) \, \psi(v') \, \mathrm{d} \sigma \, \mathrm{d}v_*\mathrm{d}v.
\end{split}
\end{equation}

Moreover, by changing $(v,v_*,\sigma)\leftrightarrow (v_*,v,-\sigma)$ in the last integral we get another representation
\begin{multline}\label{weak collision operator gain}
\int_{ \mathbb{R}^N} Q^{+}_{ij}(f,g)(v) \, \psi(v) \, \mathrm{d}v\\=  \int_{\mathbb{R}^N} \int_{\mathbb{R}^N} \int_{\mathbb{S}^{N-1}} f(v_*) \, g(v) \, \mathcal{B}_{ij}(\left|u\right|,\hat{u}\cdot \sigma) \, \psi(w'_*) \, \mathrm{d} \sigma \, \mathrm{d}v_*\mathrm{d}v,
\end{multline}
where $w'_*$ is
$$
w'_* = v_* - (1-r_{ij}) (\left|u\right| \sigma - u ),
$$
as introduced in \eqref{collision transformation ji}.

\textit{A different weak form representation for the gain term, associated to binary interactions for mixture via angular integration.} Following the representation introduced in \cite{IG-Alonso-Carneiro}, we need to write the integration on the sphere of any test function for a binary interaction of two particles with different masses as follows . Let $\varphi$ and $\chi$ be bounded and continuous functions. We  define 
the collision weight angular integral operator acting on the test functions $\varphi$ and $\chi$,
\begin{equation}\label{Pij}
\mathscr{P}_{ij}(\varphi, \chi)(u) := \int_{\mathbb{S}^{N-1}} \varphi(u^-_{ij}) \, \chi(u^+_{ij}) \, b_{ij}(\hat{u} \cdot \sigma)\, \md \sigma,
\end{equation}
where $u^+$ and $u^-$ are defined by
$$u^-_{ij}:=(1-r_{ij})(u-|u|\sigma)  \ \  \text{and} \ \  u^+_{ij}:= u-u^-_{ij}=r_{ij}u+(1-r_{ij})|u|\sigma.$$
Moreover, let $\tau$ and $\mathcal{R}$ denote the translation and reflection operators,
$$\tau_v\psi(x) := \psi(x-v) \ \ \text{and} \ \ \mathcal{R}\psi(x):=\psi(-x), \ v, x \in \mathbb{R}^N.$$
Then the weak formulation \eqref{weak form Qij} of the gain part of the collision operator for the cross section \eqref{cross section}
can be rewritten as
\begin{multline}\label{Pij and Qij+}
\int_{\mathbb{R}^N} Q^+_{ij}(f,g)(v)\psi(v) \md v = \int_{\mathbb{R}^N} \int_{\mathbb{R}^N}  f(v) g(v-u) |u|^{\gamma_{ij}} \mathscr{P}_{ij} (\tau_{-v}(\mathcal{R}\psi),1)(u)\md u \md v\\
 = \int_{\mathbb{R}^N} \int_{\mathbb{R}^N}  f(v-u) g(v) |u|^{\gamma_{ij}}  \mathscr{P}_{ij} (1, \tau_{-v}(\mathcal{R}\psi))(u) \md u \md v.
\end{multline}


\subsubsection{Carleman representation}

 The Carleman integral representation in the study of the Boltzmann equation for elastic binary interactions \cite[Appendix]{GambaPanfVil09} plays an important role in the analysis as much as in approximation theory to the studies of the initial value problem associated to this model. It provides a strong form alternate representation of the gain operator where the angular integration is performed in the orthogonal direction to the one corresponding to the difference of a molecular velocity $v$ and its post collisional one $v'$. 
 Such framework  has been useful,  both, in the study of propagation of $L^\infty$-estimates of classical solutions for elastic interactions with hard potentials and integrable cross sections \cite{GambaPanfVil09}  or for hard potentials with non-integrable cross sections in \cite{IG-P-T} . It was also used in the gain of integrability properties for the elastic Boltzmann equation for hard potentials \cite{Alonso-Carneiro, IG-Alonso} to obtain explicit estimates in $L^p$, that are sharp in some cases.

In the case of a system of Boltzmann equations for binary interactions for mixture of gases, the analogue to the Carleman representation, has only been recently addressed in \cite{BriantDaus16}  where some constant parameters are undetermined. 

We present here a new formulation  for the Carleman representation  for the  strong collisional form associated to the gain operator for a binary interaction, that is  compatible with the Banach spaces and $L^1_k$ norms  that allowed us to construct vector valued solution in \cite{IG-P-C}. All parameters in our new representation  are determined by functions of the corresponding two different masses  in the interaction.

\begin{theorem}[Carleman representation of the gain term] \label{Th Carleman}
	Let $$\p= \frac{\left( v - (2r_{ij}-1) v' \right)}{2(1-r_{ij})}, \quad r_{ij}\in (0,1).$$
	Denote with $\E$  the hyperplane  orthogonal to the vector $v-v'$, 
	that is
	\begin{equation}\label{hyperplane E}
	\E= \left\{  y\in \mathbb{R}^N: (v-v') \cdot y =0   \right\} \subset \mathbb{R}^{N-1}.
	\end{equation}
	Let $f$ and $g$ be nonnegative functions. 
	Then the gain term \eqref{collision operator gain} can be represented as follows
	\begin{multline}\label{carleman representation}
	Q^{+}_{ij}(f,g)(v) = (1-r_{ij})^{-N+1} \int_{x \in \mathbb{R}^N} \frac{f(x)}{\left|x-v\right|} \int_{ z \in E_{v x}}  g(z+P_{r_{ij}}(v,x) ) \\ \times \left| \frac{(v-x)}{2(1-r_{ij})}+z\right|^{2-N}  \mathcal{B}_{ij}\left( \left|\frac{(v-x)}{2(1-r_{ij})}+ z\right|, 1- \frac{ \left|x-v\right|^2}{2(1-r_{ij})^2 \left|\frac{(v-x)}{2(1-r_{ij})}+ z\right|^2}\right) \mathrm{d}z \, \mathrm{d}x. 
	\end{multline}
\end{theorem}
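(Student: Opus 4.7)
The plan is to perform a sequence of three changes of variables starting from the strong form of $Q^{+}_{ij}(f,g)(v)$, transforming the integration over $(v_*,\sigma)\in\mathbb{R}^N\times\mathbb{S}^{N-1}$ into one over $(x,z)\in\mathbb{R}^N\times E_{vx}$. First, for $v_*$ fixed, I would replace the sphere integral over $\sigma$ by a full-space integral over $\omega:=|u|\sigma$, using the identity $\md\sigma = |u|^{-(N-1)}\delta(|\omega|-|u|)\,\md\omega$ together with $\delta(|\omega|-|u|)=2|u|\delta(|\omega|^{2}-|u|^{2})$, so that
\[
\int_{\mathbb{S}^{N-1}} h(\sigma)\,\md\sigma = 2|u|^{-(N-2)}\int_{\mathbb{R}^N}h(\omega/|u|)\,\delta(|\omega|^{2}-|u|^{2})\,\md\omega.
\]
In these variables $v'=v+(1-r_{ij})(\omega-u)$ and $v'_*=v_*-r_{ij}(\omega-u)$. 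I would then change $\omega\mapsto x:=v+(1-r_{ij})(\omega-u)=v'$, whose Jacobian contributes a factor $(1-r_{ij})^{-N}$. A short computation gives
\[
|\omega|^{2}-|u|^{2} = \frac{|x-v|^{2}+2(1-r_{ij})(v-v_*)\cdot(x-v)}{(1-r_{ij})^{2}},
\]
so rescaling the delta produces an additional factor $(1-r_{ij})^{2}$, and on the support of the delta one has $v'=x$ exactly.

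Second, I would integrate out the remaining delta in $v_*$. Decomposing $v_*=t\hat n+z$ with $\hat n=(x-v)/|x-v|$ and $z\in E_{vx}$, the argument of the delta is affine in $t$ with slope $-2(1-r_{ij})|x-v|$, so integration contributes $1/(2(1-r_{ij})|x-v|)$ and pins $t$ to the unique value $t_*=v\cdot\hat n+|x-v|/(2(1-r_{ij}))$. A direct algebraic manipulation then yields $v_*\big|_{t=t_*}=z+P_{r_{ij}}(x,v)$, and hence
\[
v'_*=v_*-\frac{r_{ij}}{1-r_{ij}}(x-v)=z+P_{r_{ij}}(v,x),
\]
recovering the argument of $g$. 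Using the orthogonality $z\cdot(v-x)=0$ I would obtain $u=\frac{v-x}{2(1-r_{ij})}-z$ and therefore
\[
|u|^{2}=\Big|\frac{v-x}{2(1-r_{ij})}\Big|^{2}+|z|^{2}=\Big|\frac{v-x}{2(1-r_{ij})}+z\Big|^{2},
\]
which simultaneously supplies the $|u|^{2-N}$ prefactor inherited from the first step and the first argument of $\mathcal{B}_{ij}$, while a parallel computation of $\hat u\cdot\sigma$ produces the second argument $1-|x-v|^{2}/(2(1-r_{ij})^{2}\,|\cdot|^{2})$.

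Collecting Jacobian factors gives the overall constant $2|u|^{-(N-2)}(1-r_{ij})^{-N}(1-r_{ij})^{2}\big(2(1-r_{ij})|x-v|\big)^{-1}=(1-r_{ij})^{-N+1}\,|u|^{2-N}/|x-v|$, matching the stated formula \eqref{carleman representation}. The main obstacle I anticipate is the algebraic identification $v_*\big|_{t=t_*}=z+P_{r_{ij}}(x,v)$: since $m_i\neq m_j$ breaks the symmetry under $v\leftrightarrow v_*$, the shift $P_{r_{ij}}$ is asymmetric (indeed $v'_*$ uses $P_{r_{ij}}(v,x)$ whereas $v_*$ uses $P_{r_{ij}}(x,v)$), and the coefficients $(2r_{ij}-1)$ and $2(1-r_{ij})$ must be tracked carefully through the parallel/perpendicular decomposition relative to $v-x$. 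A convenient sanity check is the elastic limit $r_{ij}=1/2$, where $P_{r_{ij}}(v,x)=v$ and the overall prefactor reduces to $2^{N-1}$, recovering the classical Carleman representation. The distributional manipulations can be rigorously justified by a standard mollification of the delta, which I would treat as routine.
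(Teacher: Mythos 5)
Your overall approach---converting the spherical integral into a full-space integral against a $\delta$-function, changing variables $\omega\mapsto x=v'$ (which is exactly the paper's $y\mapsto v'=v+(1-r_{ij})y$, written the other way around), and then integrating out the remaining $\delta$ via a parallel/perpendicular decomposition---is the same route the paper takes, and your Jacobian bookkeeping gives the correct prefactor $(1-r_{ij})^{-N+1}/(|x-v|\,|u|^{N-2})$. However, the identity $v_*\big|_{t=t_*}=z+P_{r_{ij}}(x,v)$, which you yourself flag as the delicate point, is not literally true as stated. Write $\hat n=(x-v)/|x-v|$ and decompose $P_{r_{ij}}(x,v)$ along and across $\hat n$. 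One finds that its $\hat n$-component is exactly $t_*=v\cdot\hat n+\tfrac{|x-v|}{2(1-r_{ij})}$, so the parallel part matches, but $P_{r_{ij}}(x,v)$ also has a nonzero perpendicular part, namely $v^\perp:=v-(v\cdot\hat n)\hat n\in E_{vx}$. Hence
\begin{equation*}
v_*\big|_{t=t_*}=t_*\hat n+z=P_{r_{ij}}(x,v)+\bigl(z-v^\perp\bigr),
\end{equation*}
not $P_{r_{ij}}(x,v)+z$, and correspondingly $v'_*=(z-v^\perp)+P_{r_{ij}}(v,x)$ and $u=\tfrac{v-x}{2(1-r_{ij})}-(z-v^\perp)$. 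The final representation is still correct because $z\mapsto z-v^\perp$ is a translation within the hyperplane $E_{vx}$ and therefore has unit Jacobian, but that change of variable must be performed explicitly; as written, the ``direct algebraic manipulation'' step does not go through. The paper avoids this bookkeeping issue by first translating $v'_*\mapsto\bar v=v'_*-v$ and only then performing the parallel/perpendicular split: since $\bar v$ already has no inherited $v$-offset, its perpendicular component coincides with the $z$ that ultimately appears as the integration variable in \eqref{carleman representation}. If you carry out the corresponding shift in your scheme (or, equivalently, decompose $v_*-v$ rather than $v_*$), your argument closes.
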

The proof of this Theorem is given in Appendix \ref{Appendix proof Carleman}.

\subsubsection{Kernel form}
By means of the Carleman representation \eqref{carleman representation}, the 
operator $\QFGp$ can be written in a  kernel form as follows.
\begin{lemma}(Kernel form of the gain operator) Let $\F = \left[f_i\right]_{1\leq i \leq I}$ and $\G = \left[g_i\right]_{1\leq i \leq I}$, where $f_i(v)\geq 0$ and $g_i(v)\geq0$ for all $v \in \mathbb{R}^N$ and all $1\leq i \leq I$.
	Then the gain operator  $\QFGp$  \eqref{gain term vector} can be written in the following kernel form  
	\begin{equation*}
	\left[\QFGp\right]_i(v) = \int_{\mathbb{R}^N} f_i(x)  \, \K \, \mathrm{d}x,
	\end{equation*}
	where the kernel is
	\begin{equation*}
	\K= \sum_{j=1}^I \tau_x Q_{ij}^+(\delta_0, \tau_{-x}g_j)(v),
	\end{equation*}
	with the translation operator $\tau_w$ defined by
	\begin{equation*}
	\tau_w g(v) = g(v-w),\quad \text{for any} \quad v, w \in \mathbb{R}^N.
	\end{equation*}
\end{lemma}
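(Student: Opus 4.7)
The plan is to obtain the kernel form as an immediate corollary of the Carleman representation \eqref{carleman representation} by factoring $f_i$ out of the outer integral and then repackaging the remaining inner integral via translations. First, I would unfold the vector definition
$$\left[\QFGp\right]_i(v) = \sum_{j=1}^I Q^+_{ij}(f_i, g_j)(v)$$
and apply \eqref{carleman representation} to each summand. Since $f_i(x)$ enters the Carleman formula only as a multiplicative factor in the outer $\md x$-integral, interchanging the finite sum with the integral and factoring $f_i(x)$ out gives $\left[\QFGp\right]_i(v) = \int_{\mathbb{R}^N} f_i(x) \sum_{j=1}^I \mathcal{I}_{ij}(v,x)\,\md x$, where each $\mathcal{I}_{ij}(v,x)$ denotes the inner Carleman integrand associated to the pair $(f_i,g_j)$.

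Next, I would identify $\mathcal{I}_{ij}(v,x) = \tau_x Q^+_{ij}(\delta_0,\tau_{-x}g_j)(v)$. Formally feeding $\delta_0$ into the first slot of the Carleman formula collapses the outer integral, so that
$$Q^+_{ij}(\delta_0,\widetilde g)(w) = (1-r_{ij})^{-N+1}\frac{1}{|w|}\int_{z\in E_{w0}}\widetilde g\bigl(z+P_{r_{ij}}(w,0)\bigr)\Bigl|\tfrac{w}{2(1-r_{ij})}+z\Bigr|^{2-N}\mathcal{B}_{ij}(\cdots)\,\md z.$$
Specializing $w=v-x$ and $\widetilde g = \tau_{-x}g_j$ (this evaluation at $v-x$ is precisely the action of $\tau_x$), term-by-term agreement with $\mathcal{I}_{ij}(v,x)$ follows from three elementary identities: the hyperplane $E_{(v-x)\,0}=\{y:(v-x)\cdot y=0\}$ coincides with $E_{vx}$; the definition of $P_{r_{ij}}$ yields $P_{r_{ij}}(v-x,0)+x=P_{r_{ij}}(v,x)$, whence $\tau_{-x}g_j\bigl(z+P_{r_{ij}}(v-x,0)\bigr)=g_j\bigl(z+P_{r_{ij}}(v,x)\bigr)$; and both the weight $\bigl|\tfrac{v-x}{2(1-r_{ij})}+z\bigr|^{2-N}$ and the arguments of $\mathcal{B}_{ij}$ are functions of $v-x$ and $z$ alone, so they carry over identically.

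The only conceptual point that deserves comment is the meaning of $Q^+_{ij}(\delta_0,\cdot)$, since the definition \eqref{collision operator gain} does not apply literally to a Dirac mass in the first slot. I would treat the notation as shorthand for the closed-form expression produced by substituting $f=\delta_0$ into \eqref{carleman representation}, which defines a locally integrable function of $v$ under the integrability assumptions on $g_j$; equivalently, the identity can be justified by approximating $\delta_0$ by a mollifying sequence $\eta_\varepsilon$ and passing to the limit inside the outer integral. With this convention in place, the proof reduces to the translation bookkeeping above, and I do not foresee any genuine obstacle.
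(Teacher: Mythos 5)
Your proof is correct and follows essentially the same route as the paper: both collapse the Carleman representation at a Dirac mass in the first slot to produce the explicit form of $Q^+_{ij}(\delta_0,g)$, and then conjugate by the translation $\tau_x$ to recover the kernel $\K$. The paper states the $\delta_0$-collapsed formula and then says only ``the kernel representation is obtained by using the translation operator,'' whereas you spell out the translation bookkeeping, in particular the identity $P_{r_{ij}}(v-x,0)+x=P_{r_{ij}}(v,x)$ and the coincidence of $\{v-x\}^{\perp}$ with $E_{vx}$, which are exactly the points left implicit in the paper's one-line conclusion.
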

\begin{proof}
	The proof of this lemma uses the Carleman representation for the gain operator for a binary interaction in species mixture  \eqref{carleman representation}, proven in Appendix A. Indeed, it follows
	\begin{multline*}
	Q^+_{ij}(\delta_0,g)(v)=\left(1-r_{ij}\right)^{-N+1} \frac{1}{\left|v\right|} \int_{z\in \left\{ v \right\}^\perp} g\left(\frac{v}{2(1-r_{ij})} + z\right) \left| \frac{v}{2(1-r_{ij})}  + z\right|^{2-N} \\ \times \mathcal{B}_{ij}\left(\left| \frac{v}{2(1-r_{ij})} + z\right|, 1-\frac{\left|v\right|^2}{2(1-r_{ij})^2\left|\frac{v}{2(1-r_{ij})}  + z\right|^2}\right) \mathrm{d}z.
	\end{multline*}
	Then the kernel representation is obtain by using the translation operator.
\end{proof}

\subsection{Propagation of polynomial and exponential weighted $L^p$ norms}
We start by recalling that in \cite{IG-P-C} the authors proved existence and uniqueness of the vector value solution of the Cauchy problem for the homogeneous Boltzmann system of equations and generation and propagation of polynomial and exponential weighted $L^1$ norms by means of an existence theorem for ODE systems in suitable Banach spaces. We stress that norms are generated in $L^1_k, \, k \geq 0$ (observables) because of the use of Jensen's inequality for probability. In the present manuscript, we cannot have an analogue of such an inequality for $L^p, \, p>1$; therefore, we will prove the propagation of polynomial and exponential weighted $L^p$ norms. In order to do so, we need to obtain a lower bound for the negative contribution of the loss term and upper bounds for the gain part of the collision operator that produces a gain of integrability, by meaning that $L^p_k$ norm of the positive part of the collision operator is controlled sublinearly by the $L^p_k$ norms of the input functions.

\begin{theorem}\label{Gen and prop Lp norms}(Propagation of polynomially weighted $L^p$ norms.) If $\F$ is a solution of the Boltzmann system
\begin{equation}\label{Cauchy problem}
\left\{ \begin{split} & \partial_t \F(t,v) = \Q(t,v), \quad  t>0, \ v \in \mathbb{R}^N, \\& \F(0,v) = \F_0(v),\end{split} \right.
\end{equation} with the cross section \eqref{cross section} where $b_{ij} \in L^1(\mathbb{S}^{N-1})$, an initial data $\F_0:=\F(0,\cdot) \, \in \Omega$, with $\Omega$ defined in \eqref{Omega}, and $\|\F_0\|_{L^p_k}^p < \infty$  for  $k>k^*$ , with $k^*$ as given in \eqref{k star}, then there is a constant $D_{k}=\left( \frac{B_k}{A_k}\right)^{-\frac{p}{1-\theta}}$, where $B_k$ and $A_k$ are defined as
\begin{equation}\label{const Q}
\begin{split}
A_{k} &=  \frac{\min_{1\leq i,j, \leq I} \|b_{ij}\|_{L^1(\mathbb{S}^{N-1})}}{\max_{1\leq j \leq I} m_j} c_{l b}  -   \epsilon^{\bar{\gamma}}  I^{2-\frac{1}{p}} 2^{\frac{p-1}{p}} \|\F\|_{\ljedk} \Cijroner \\
&\qquad- \left( 2 \frac{\sum_{k=1}^I m_k}{ \min_{1\leq i \leq I}{m_i}} \right)^{\bar{\gamma}}  \mathfrak{C} \epsilon^{\bar{\gamma}} \| \F \|_{L^1_{k + \bar{\gamma}\left( 1 + \frac{1}{p}\right)}} , \\[0.2cm]
B_k &=  I^{2-\frac{1}{p}} 2^{\frac{p-1}{p}} \epsilon^{2+\underline{\gamma} - N}  \hat{C}_N \|\F\|_{\ljedk} \| \F\|^{2-\theta}_{L^1_{\frac{N-2}{1-\theta}+k}},
\end{split}
\end{equation} such that
\begin{equation}\label{propagation of Lp}
\|\F\|^p_{L^p_k} \leq \max\{D_k, \|\F_0\|_{L^p_k}^p \}.
\end{equation}

		\end{theorem}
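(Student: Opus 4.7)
The plan is to turn the system into an autonomous scalar differential inequality for $y(t):=\|\F(t,\cdot)\|_{L^p_k}^{p}$ and then close by a barrier comparison for ODEs with sub-linear source. The three ingredients that make this possible are (i) the coercive pointwise lower bound for the loss operator from \cite{IG-P-C}, (ii) the $L^p_k$ Young-type bilinear control of $Q^+_{ij}$ developed in Section \ref{Section Lp}, and (iii) the uniform-in-time propagation of $L^1_k$ moments proved in \cite{IG-P-C}, which allows every $L^1$ quantity appearing below to be treated as a bounded, time-independent coefficient depending only on the data.

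\emph{Deriving the differential inequality.} I multiply the $i$-th component of \eqref{Cauchy problem} by $f_i^{p-1}\langle v\rangle_i^{pk}$, integrate in $v$ and sum in $i$ (justified by the regularization used in \cite{IG-P-C}, together with $f_i\ge 0$), obtaining
\[
\tfrac{1}{p}\tfrac{d}{dt}\|\F\|_{L^p_k}^{p}
=\sum_{i,j}\!\int_{\mathbb{R}^N}\!Q_{ij}^{+}(f_i,f_j)\,f_i^{p-1}\langle v\rangle_i^{pk}\,dv
-\sum_{i,j}\!\int_{\mathbb{R}^N}\!Q_{ij}^{-}(f_i,f_j)\,f_i^{p}\langle v\rangle_i^{pk}\,dv.
\]
To the loss sum I apply the pointwise bound $\sum_j Q_{ij}^{-}(f_i,f_j)(v)\ge \frac{\min_{ij}\|b_{ij}\|_{L^1}}{\max_j m_j}\,c_{lb}\langle v\rangle_i^{\bar\gamma}f_i(v)$ from \cite{IG-P-C}, which delivers the leading positive constant of $A_k$ times the coercive weighted norm $\|\F\|_{L^p_{k+\bar\gamma/p}}^{p}\ge \|\F\|_{L^p_k}^{p}$. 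The gain sum I split at a cutoff $|u|\lessgtr\epsilon$: the near piece is estimated pointwise by $\epsilon^{\bar\gamma}$ and Hölder against the weighted $L^1$-norms of $\F$, producing the two $\epsilon^{\bar\gamma}$-corrections appearing in $A_k$, which are absorbed into the coercive loss for $\epsilon$ small; the far piece is controlled through the $L^p_k$ Young-type inequality for $Q^+_{ij}$ from Section \ref{Section Lp}, followed by one final Hölder pairing against $f_i^{p-1}\langle v\rangle_i^{k(p-1)}\in L^{p'}$, producing a source of the shape $B_k\,\|\F\|_{L^p_k}^{p-1+\theta}$ with interpolation exponent $\theta\in(0,1)$.

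\emph{ODE comparison.} Freezing the $L^1$-norms by the moment propagation of \cite{IG-P-C}, the above collects into
\[
\frac{d}{dt}y(t)\;\le\;-A_k\,y(t)+B_k\,y(t)^{(p-1+\theta)/p},
\]
with $A_k,B_k$ as in \eqref{const Q}. Choosing $\epsilon$ small enough that $A_k>0$, the right-hand side vanishes at $y_\ast=(B_k/A_k)^{p/(1-\theta)}=D_k$ and is strictly negative for $y>D_k$; a standard barrier argument then yields $y(t)\le \max\{D_k,y(0)\}$, which is precisely \eqref{propagation of Lp}.

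\emph{Main obstacle.} The delicate step is the $L^p_k$ Young-type bound on $Q^+_{ij}$ in the mass-asymmetric, species-dependent weighted framework. One must simultaneously (a) produce a strictly sub-linear power of $\|\F\|_{L^p_k}$ so the ODE admits a finite barrier, (b) pay the cost of interpolation only in $L^1_k$ moments of sufficiently high order (here $\tfrac{N-2}{1-\theta}+k$), already propagated in \cite{IG-P-C}, and (c) calibrate the cutoff $\epsilon$ so that the short-range gain contribution is strictly dominated by the coercive loss, guaranteeing $A_k>0$. The mass asymmetry $r_{ij}\ne 1/2$ forces this bookkeeping to be carried out pairwise in $(i,j)$, which is exactly where the new Carleman representation of Theorem \ref{Th Carleman} and the weighted $L^p_k$ architecture developed in Sections \ref{Section Ltwo}--\ref{Section Lp} become essential.
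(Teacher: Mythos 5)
Your proposal follows the paper's proof strategy faithfully: the energy method of multiplying by $f_i^{p-1}\langle v\rangle_i^{pk}$ and integrating, the coercive lower bound for the loss term from \cite{IG-P-C}, the gain-of-integrability bound producing a strictly sublinear source in $\|\F\|_{L^p_k}$, the monotonicity $\|\F\|_{L^p_{k+\bar\gamma/p}}\ge\|\F\|_{L^p_k}$ to close the coercive term, and finally the Bernoulli-type ODE comparison (your $D_k=(B_k/A_k)^{p/(1-\theta)}$ is the correct barrier value; the negative exponent in the paper's statement of the theorem is a typographical slip).

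The one refinement worth flagging is your attribution of both $\epsilon^{\bar\gamma}$-corrections in $A_k$ to a single $|u|\lessgtr\epsilon$ cutoff. In the paper they arise from two distinct mechanisms. The gain-of-integrability estimate of Theorem \ref{trm qplus r fg} (which does perform the $|u|$-cutoff internally and produces both the first $\epsilon^{\bar\gamma}$-term with constant $\Cijroner$ and the $\epsilon^{2+\underline{\gamma}-N}\hat{C}_N$ source) actually requires $b_{ij}\in L^\infty$, since $\hat{C}_N$ involves $\|b_{ij}\|_{L^\infty(\mathbb{S}^{N-1})}$ through \eqref{CNij}. To handle the stated hypothesis $b_{ij}\in L^1(\mathbb{S}^{N-1})$, Proposition \ref{prop Q} first splits the angular kernel as in \eqref{b splitting}, $b_{ij}=b_{ij}^1+b_{ij}^\infty$ with $\|b_{ij}^1\|_1\le\epsilon^{\bar\gamma}$; the $b_{ij}^\infty$ piece is fed into the gain-of-integrability result, while the $b_{ij}^1$ piece is estimated separately via the control \eqref{control relative velocity} of the relative velocity and the Young-type inequality of Theorem \ref{trm Lr0 norm of Q+}, yielding the second $\epsilon^{\bar\gamma}$-term in $A_k$ (the one with $\mathfrak{C}$ and the weight $k+\bar\gamma(1+1/p)$). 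Without this preliminary angular splitting, the chain of estimates you outline would only be valid for bounded $b_{ij}$, so the hypothesis $b_{ij}\in L^1$ of the theorem would not be reached.
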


\

We can also state  the theorem for propagation of $L^p$ norms with exponential weights, for $p \in (1, \infty)$. The idea behind the proof will follow from what is proven for polynomial weights.

\begin{theorem}(Propagation of exponentially weighted $L^p$ norms.) \label{theorem exp Lp}
Let $\F$ be the solution of the Boltzmann system \eqref{Cauchy problem} with the cross section \eqref{cross section} where $b_{ij} \in L^1(\mathbb{S}^{N-1})$ and let
\begin{equation}\label{sup bar gamma}
\bar{\gamma} = \max_{1\leq i,j \leq I} \gamma_{ij} \in (0,1].
\end{equation} Assume that the initial data $\F_0:=\F(0,\cdot)$ satisfies the assumptions of Theorem \ref{Gen and prop Lp norms}. If additionally \begin{equation*}
\expLoneLpFzero = C_0^e < \infty,
\end{equation*} 
for some $s \in (0,1]$, $p \in (1,\infty)$ and positive constants $\alpha_0$ and $C_0^e$, then, there exist a positive constant $\alpha$ such that
\begin{equation}\label{prop exp weight}
 \expLpF \leq \max \left\{\left( \frac{\hat{B_0}}{\hat{A_0}}\right)^{-\frac{1}{1-\theta}}, \|\F_0 e^{\alpha \langle \cdot \rangle^s}\|_{\lpnula}\right\}, \quad t \geq 0.
\end{equation}
with $\hat{B_0}$ and $\hat{A_0}$ given as
\begin{equation}\label{const Bzero Azero}
\begin{split}
\hat{A_{0}} &= \min_{1\leq i,j \leq I}  \bbjed \frac{1}{\max_{1 \leq j \leq I} m_j} c_{l b}  -   \epsilon^{\bar{\gamma}}  I^{2-\frac{1}{p}} 2^{\frac{p-1}{p}} \|\F e^{\alpha \langle \cdot \rangle^s}\|_{\lnula} \Cijroner, \\
\hat{B_0} &=  I^{2-\frac{1}{p}} 2^{\frac{p-1}{p}} \epsilon^{2+\underline{\gamma} - N}  \hat{C}_N \| \F e^{\alpha \langle \cdot \rangle^s} \|^{2-\theta}_{L^1_{\frac{N-2}{1-\theta}}},
\end{split}
\end{equation} 
and $\theta$ as given in \eqref{theta r N}.
\end{theorem}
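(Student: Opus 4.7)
The approach is to mirror the scheme used for Theorem \ref{Gen and prop Lp norms}, but applied to the exponentially weighted vector $\widetilde{\F} := [f_i \, e^{\alpha \japvi^{s}}]_{1\le i \le I}$. The key algebraic ingredient is that the mass-normalized brackets satisfy the identity $\japvip^{2} + \japvjps^{2} = \japvi^{2} + \japvjs^{2}$, which is the energy conservation \eqref{CL micro} rewritten under the renormalization $\japvi^2 = 1 + \frac{m_i}{\sum_k m_k} |v|^2$. Combined with the subadditivity of $x \mapsto x^{s/2}$ for $s \in (0,1]$, this gives
\begin{equation*}
\japvi^{s} \;\le\; \japvip^{s} + \japvjps^{s},
\qquad\text{hence}\qquad
e^{\alpha \japvi^{s}}\, Q_{ij}^{+}(f_i,f_j)(v) \;\le\; Q_{ij}^{+}\!\left( f_i \, e^{\alpha \langle \cdot \rangle_i^{s}},\, f_j \, e^{\alpha \langle \cdot \rangle_j^{s}} \right)\!(v),
\end{equation*}
so that the exponential weight can be absorbed inside the gain operator free of charge.

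Next I would test the $i$-th equation in \eqref{Cauchy problem} against $f_i^{p-1} e^{p \alpha \japvi^{s}}$ and sum over $i$ to obtain
\begin{equation*}
\frac{1}{p} \frac{\md}{\md t} \expLpF^{p} = \sum_{i,j=1}^{I} \int_{\mathbb{R}^N} f_i^{p-1}(v) \, e^{p \alpha \japvi^{s}} \, \bigl( Q_{ij}^{+}(f_i,f_j) - Q_{ij}^{-}(f_i,f_j) \bigr) \md v.
\end{equation*}
The absorption inequality rewrites each gain-side integrand as a pointwise bound by $\tilde f_i^{p-1}\, Q^{+}_{ij}(\tilde f_i,\tilde f_j)$, and the latter is precisely the object controlled by the bilinear gain-of-integrability estimates of Sections \ref{Section Ltwo} and \ref{Section Lp} applied to $\widetilde{\F}$; the subsequent interpolation produces a contribution whose coefficient matches the structure of $\hat{B_0}$ in \eqref{const Bzero Azero}. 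On the loss side, the pointwise lower bound on $\sum_j \int f_j(v_*) |u|^{\gamma_{ij}} b_{ij}(\hat u\cdot\sigma)\, \md \sigma \md v_*$ borrowed from \cite{IG-P-C} is independent of the exponential weight and so transfers directly to $\tilde f_i^{p}$; after subtracting the standard $\epsilon^{\bar\gamma}\Cijroner$ correction this yields the coercive coefficient $\hat{A_0} \, \expLpF^{p}$.

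Combining the gain and loss estimates produces the scalar differential inequality
\begin{equation*}
\frac{\md}{\md t}\, Y(t) \;\le\; -\,p\,\hat{A_0}\, Y(t) \;+\; p\,\hat{B_0}\, Y(t)^{\theta}, \qquad Y(t) := \expLpF^{p},
\end{equation*}
for which a standard comparison argument with the equilibrium value $(\hat{B_0}/\hat{A_0})^{1/(1-\theta)}$ of the right-hand side then gives \eqref{prop exp weight}. The hard part will be the choice of the rate $\alpha$: both $\hat{A_0}$ and $\hat{B_0}$ depend on exponentially weighted $L^1$ moments of $\F$, so I have to fix $\alpha \in (0, \alpha_0]$ small enough that the propagation of $L^1$ exponential moments proved in \cite{IG-P-C} applies at this rate, keeping both $\|\F e^{\alpha \langle \cdot \rangle^{s}}\|_{\lnula}$ and the weighted $L^{1}_{(N-2)/(1-\theta)}$-moment entering $\hat{B_0}$ bounded uniformly in time, while simultaneously tuning the auxiliary parameter $\epsilon$ in \eqref{const Bzero Azero} so that $\hat{A_0}>0$. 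Once this balancing is achieved the remainder of the argument is routine ODE comparison, and the only genuine novelty relative to the polynomial case is the absorption inequality derived at the outset.
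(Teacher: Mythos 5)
Your proof follows essentially the same route the paper takes: the energy-conservation absorption estimate $\japvi^s \le \japvip^s + \japvjps^s$, the loss-term lower bound from \cite{IG-P-C}, the gain estimates of Propositions \ref{trm Q+}--\ref{prop Q} applied to $\G := \F\,e^{\alpha\langle\cdot\rangle^s}$, a Bernoulli-type ODE comparison, and a choice of $\alpha$ small enough to keep the exponentially weighted $L^1$-moments uniformly bounded. One slip: with $Y := \expLpF^{p}$, the nonlinear term in the differential inequality coming from Proposition \ref{prop Q} scales as $Y^{(p-1+\theta)/p}$ (from $\|\G\|^{p-1+\theta}_{\lpnula}$), not $Y^{\theta}$; with this corrected exponent the equilibrium value you quote for $\expLpF$, namely $(\hat{B_0}/\hat{A_0})^{1/(1-\theta)}$, is indeed what the comparison yields.
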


Moreover, we can extend this results for the case $p=\infty$.
\begin{theorem}(Propagation of polynomially weighted $L^\infty$ norms.)\label{propagation of l inf norm pol weight}
Let $\gamma_{ij} \in (0,1]$, $b_{ij} \in L^1(\mathbb{S}^{N-1})$, and an initial data satisfying the hypothesis of Theorem \ref{Gen and prop Lp norms} and such that
\begin{equation*}
\| \F_0 \|_{L^{\infty}_k} = \mathcal{C}_0,
\end{equation*}  for  $k>k^*$ with $k^*$ as given in \eqref{k star}, and for some positive constant $\mathcal{C}_0$. Then there exists a constant $\mathcal{C}(\F_0)$ depending on $\bar{\gamma}, \, m_i, \, b_{ij}, k$ such that
\begin{equation}\label{ineq l inf norm pol weight}
\| \F(t, \cdot) \|_{L^{\infty}_k} \leq \mathcal{C}(\F_0), \quad t\geq 0,
\end{equation}
for $\F$ the solution of the Boltzmann system \eqref{Cauchy problem}, with $\bar{\gamma}$ defined as in \eqref{sup bar gamma}.
\end{theorem}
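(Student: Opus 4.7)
The plan is to derive a pointwise differential inequality for $f_i(t,v)\japvi^k$ that features an absorbing contribution from the loss part of the collision operator, and then to integrate it in time via an ODE comparison argument. This follows the same strategy as in \cite{IG-Alonso-Taskovic} for the single--species case, but with the mixture--dependent weights $\japvi$ and the mass ratios $r_{ij}$ introduced in Section~\ref{Preliminaries}.

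Fix $i\in\{1,\dots,I\}$ and $v\in\mathbb{R}^N$ and multiply the $i$-th component of \eqref{Cauchy problem} by $\japvi^k$. Writing $[\Q]_i=[\QFFp]_i-\nu_i[\F]f_i$ with
$$
\nu_i[\F](v)=\sum_{j=1}^{I}\|b_{ij}\|_{L^1(\mathbb{S}^{N-1})}\int_{\mathbb{R}^N}f_j(v_*)|v-v_*|^{\gamma_{ij}}\,\md v_*,
$$
we obtain
$$
\partial_t\bigl(f_i(t,v)\japvi^k\bigr)=\japvi^k[\QFFp]_i(v)-\nu_i[\F](v)\bigl(f_i(t,v)\japvi^k\bigr).
$$
The collision--frequency lower bound from \cite{IG-P-C} yields $\nu_i[\F](v)\ge \clb\japvi^{\bar{\gamma}}$, where $\clb$ depends only on the $L^1_2$--moments of $\F$, which are uniformly propagated in time.

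The key step is to control $\japvi^k[\QFFp]_i(v)$ pointwise by the $L^\infty$--norm of $\F$ at a \emph{strictly smaller} $v$--weight than $\japvi^{\bar{\gamma}}$. Using the Carleman kernel representation $[\QFFp]_i(v)=\int_{\mathbb{R}^N} f_i(x)K_i[\F](v,x)\,\md x$ proved in the preceding Lemma and the pointwise inequality $f_i(x)\le\|f_i\|_{L^\infty_{k-\epsilon,i}}\langle x\rangle_i^{-(k-\epsilon)}$ for a small $\epsilon\in(0,\bar{\gamma})$, one is reduced to estimating $\int\langle x\rangle_i^{-(k-\epsilon)}K_i[\F](v,x)\,\md x$. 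Bounding this integral using the propagated $L^p_\beta$ norms of $\F$ supplied by Theorem~\ref{Gen and prop Lp norms}, together with careful tracking of the $v$--dependence inherited from the Carleman integration over the hyperplane $E_{vx}$ defined in \eqref{hyperplane E}, should produce
$$
\japvi^k[\QFFp]_i(v)\le C_1\japvi^{\bar{\gamma}-\delta}\|\F\|_{L^\infty_{k-\epsilon}}+C_2,
$$
for some $\delta>0$, with $C_1,C_2$ depending only on the initial data through Theorem~\ref{Gen and prop Lp norms}. Interpolating $\|\F\|_{L^\infty_{k-\epsilon}}\le \|\F\|_{L^\infty_k}^{1-\epsilon/k}\|\F\|_{L^\infty_0}^{\epsilon/k}$ and applying Young's inequality absorbs the $\japvi^{\bar{\gamma}-\delta}$ growth into a small multiple of $\japvi^{\bar{\gamma}}$ plus an additive constant.

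Combining these two estimates and using $\nu_i[\F]\ge\clb\japvi^{\bar{\gamma}}$ as the absorbing factor, taking the essential supremum in $v$, summing over $i$, and setting $M(t)=\|\F(t)\|_{L^\infty_k}$, the pointwise inequality reduces to an autonomous differential inequality of the form $\dot M(t)\le C_3-\tfrac{\clb}{2}M(t)$, from which $M(t)\le\max\{2C_3/\clb, M(0)\}$ follows by ODE comparison, establishing \eqref{ineq l inf norm pol weight}. The main obstacle is the pointwise gain estimate with weight $\japvi^{\bar{\gamma}-\delta}$ strictly below $\japvi^{\bar{\gamma}}$: it requires handling both the Carleman integration over the hyperplane $E_{vx}$ with its mass--dependent scaling factor $(1-r_{ij})^{-N+1}$ and the quantitative dependence of $\clb$ on the mass densities of the mixture, which is where the multi--species structure makes the argument genuinely more delicate than in the scalar case.
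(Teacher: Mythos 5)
Your proposal takes a genuinely different route from the paper, and unfortunately it hinges on a key step that is asserted but not established, and which I do not believe goes through in the form you state it.

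The crucial display in your argument is the pointwise bound
\begin{equation*}
\japvi^k[\QFFp]_i(v)\le C_1\japvi^{\bar{\gamma}-\delta}\|\F\|_{L^\infty_{k-\epsilon}}+C_2,
\end{equation*}
with a \emph{strict gain of polynomial decay} $\delta>0$ in the velocity variable. You yourself write ``should produce,'' which signals that this is not actually proved, and this is exactly where the difficulty lies. For hard potentials the gain operator generically grows like $\japvi^{\bar{\gamma}}$, matching the loss term, and obtaining strictly better decay is not a consequence of the Carleman kernel representation together with a crude replacement $f_i(x)\le\|f_i\|_{L^\infty_{k-\epsilon,i}}\langle x\rangle_i^{-(k-\epsilon)}$. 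The Carleman representation of Theorem~\ref{Th Carleman} is used in this paper to produce \emph{gain of integrability} (Lemma~\ref{lemma estimate Q N-2 in Ldva 0}, Theorem~\ref{trm qplus r fg}), not gain of polynomial decay; converting it into a decay estimate of the type you want would require a substantial additional argument over the hyperplane integration in $E_{vx}$, tracking the $v$--dependence quantitatively, and that argument is not sketched. A related subsidiary point: your interpolation $\|\F\|_{L^\infty_{k-\epsilon}}\le\|\F\|_{L^\infty_k}^{1-\epsilon/k}\|\F\|_{L^\infty_0}^{\epsilon/k}$ requires a priori control of $\|\F\|_{L^\infty_0}$, i.e.\ of the unweighted $L^\infty$ norm, which is itself a theorem (Theorem~\ref{propagation of l inf norm}) that has to be proved first; your write-up does not acknowledge that dependency.

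The paper's actual argument avoids the need for any gain of decay. It first observes, via $\japvi\leq\japvip\japvjps$, that $\tilde f_i := f_i\japvi^k$ satisfies $\partial_t\tilde f_i\le\sum_j Q_{ij}(\tilde f_i,\tilde f_j)$, which reduces the weighted case cleanly to the unweighted one. Then, in the unweighted case, the gain term is estimated with the \emph{same} weight $\japvi^{\bar{\gamma}}$ as the loss term (after moving the relative-velocity factor onto the second argument via \eqref{control relative velocity}), and the absorption is achieved instead by splitting the angular kernel $b_{ij}=\bone+\btwo$ as in \eqref{b splitting bis}: the non-grazing part $\bone$ is bounded, giving an $L^2\times L^2\to L^\infty$ bound controlled by the already-propagated $L^2$ norm, while the grazing part $\btwo$ is small in $L^1$, so that the coefficient $C^{b_2}$ of $\|f_i\|_\infty$ in \eqref{split for Q+} can be made arbitrarily small. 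That smallness, combined with the loss term lower bound, is what closes the Grönwall/Duhamel argument. Your proposal would need to be reworked to use either this reduction-and-angular-splitting mechanism, or else to supply a genuine proof of the pointwise decay estimate you are postulating.
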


\begin{theorem}(Propagation of exponentially weighted $L^\infty$ norms.) \label{thm prop exp weight inf norm}
Let $\F$ be the solution of the Boltzmann system \eqref{Cauchy problem} with the cross section \eqref{cross section} where $b_{ij} \in L^1(\mathbb{S}^{N-1})$ and let $\bar{\gamma}$ defined as in \eqref{gamma bar}. Assume that the initial data $\F_0:=\F(0,\cdot)$ satisfies the assumptions of Theorem \ref{Gen and prop Lp norms} and that
\begin{equation*}
\| \F_0 \|_{\linfnula} = \mathcal{C}_0.
\end{equation*} 
If additionally, \begin{equation*}
\expLoneLpFzero = C_0^e < \infty,
\end{equation*} 
for some $s \in (0,1]$, $p \in (1,\infty)$ and positive constants $\alpha_0$ and $C_0^e$, then, there exist positive constants  $\alpha$  such that
\begin{equation}\label{prop exp weight inf norm}
 \expLinfF  \leq \max \left\{\left( \frac{\hat{B_0}}{\hat{A_0}}\right)^{-\frac{1}{1-\theta}}, \|\F_0 e^{\alpha \langle \cdot \rangle^s}\|_{\linfnula}\right\}, \quad t \geq 0.
\end{equation}
with $\hat{B_0}$ and $\hat{A_0}$ as given in \eqref{const Bzero Azero}.

\end{theorem}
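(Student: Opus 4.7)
The plan is to adapt the strategy of Theorem \ref{propagation of l inf norm pol weight} (polynomially weighted $L^\infty$ propagation) to exponentially weighted norms by introducing $h_i(t,v) := f_i(t,v)\,e^{\alpha \langle v \rangle_i^s}$ and $\mathbb{H} := [h_i]_{1\leq i \leq I}$, deriving for $\mathbb{H}$ a pointwise inequality of the same structural form as the one driving the polynomial case, and concluding via an ODE comparison.

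The central pointwise ingredient I would establish is
\begin{equation*}
e^{\alpha \langle v \rangle_i^s}\, Q^{+}_{ij}(f_i, f_j)(v) \leq Q^{+}_{ij}(h_i, h_j)(v), \qquad \alpha \geq 0,\; s \in (0,1].
\end{equation*}
This relies on the conservation identity $\langle v' \rangle_i^{2} + \langle v'_* \rangle_j^{2} = \langle v \rangle_i^{2} + \langle v_* \rangle_j^{2}$, which holds because of the mass-renormalized bracket $\langle v \rangle_i = \sqrt{1 + (m_i/\sum_k m_k)\,|v|^2}$ designed in \cite{IG-P-C}, combined with the sub-additivity $(a+b)^{s/2} \leq a^{s/2} + b^{s/2}$ valid for $s \in (0,2]$. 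Together these yield $\langle v \rangle_i^{s} \leq \langle v' \rangle_i^{s} + \langle v'_* \rangle_j^{s}$, hence $e^{\alpha(\langle v \rangle_i^{s} - \langle v' \rangle_i^{s} - \langle v'_* \rangle_j^{s})} \leq 1$, and inserting this factor inside the integral \eqref{collision operator gain} gives the claim.

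Multiplying the $i$-th Boltzmann equation by $e^{\alpha \langle v \rangle_i^s}$, applying this bound to the gain, and retaining the same lower bound on the loss used in \cite{IG-P-C} (which produces a dissipative contribution proportional to $c_{lb}\, h_i(v)\, \langle v \rangle_i^{\gamma_{ij}}$, since $h_i \geq f_i$ pointwise and the required $L^1$ mass of $f_j$ is conserved), I obtain a pointwise inequality for $h_i$ of the same structural form as the one handled in Theorem \ref{propagation of l inf norm pol weight}. Taking the essential supremum in $v$, summing in $i$, and invoking the $L^\infty$ gain-of-integrability estimate used there then leads to an ODE inequality
\begin{equation*}
\frac{d}{dt}\|\mathbb{H}(t)\|_{\linfnula} \leq \hat{B}_0\, \|\mathbb{H}(t)\|_{\linfnula}^{\theta} - \hat{A}_0\, \|\mathbb{H}(t)\|_{\linfnula},
\end{equation*}
with $\hat{A}_0, \hat{B}_0$ matching \eqref{const Bzero Azero}. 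The time-uniform finiteness of the $L^1$ and $L^p$ exponentially weighted moments entering these constants is supplied by the $L^1$ exponential moment theory of \cite{IG-P-C} together with Theorem \ref{theorem exp Lp}. The bound \eqref{prop exp weight inf norm} then follows from a standard comparison argument: if $\|\mathbb{H}(0)\|_{\linfnula} \leq (\hat{B}_0/\hat{A}_0)^{-1/(1-\theta)}$ the bound propagates, otherwise the norm decays monotonically to this equilibrium value.

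The main obstacle is the calibration of the rate $\alpha$: it must be chosen small enough that both the $L^1$ and $L^p$ exponential moments propagate with that same rate (forcing $\alpha \leq \alpha_0$, possibly strictly), and small enough that the subtractive $L^1$-dependent term in $\hat{A}_0$ does not overwhelm the loss dissipation, so that $\hat{A}_0 > 0$ is preserved in time. Matching the three weighted levels $L^1$, $L^p$ and $L^\infty$ in this way is the delicate step and mirrors the analogous calibration carried out in the polynomial case.
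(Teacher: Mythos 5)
Your key pointwise ingredient is exactly right and matches the paper: the renormalized weight $\langle v\rangle_i^2 = 1 + \tfrac{m_i}{\sum_k m_k}|v|^2$ is additive under energy conservation, so $\langle v\rangle_i^2 + \langle v_*\rangle_j^2 = \langle v'\rangle_i^2 + \langle v'_*\rangle_j^2$; combined with sub-additivity of $x\mapsto x^{s/2}$ for $s\le 2$ this gives $\langle v\rangle_i^s \le \langle v'\rangle_i^s + \langle v'_*\rangle_j^s$, hence the gain commutes with the exponential weight up to replacing $(f_i,f_j)$ by $(h_i,h_j)$. You also correctly invoke the loss lower bound from Lemma \ref{lemma lower bound}. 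Both of these are exactly the paper's \eqref{estimate for dt g}.

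Where the argument breaks is the final step. You claim that "taking the essential supremum in $v$, summing in $i$, and invoking the $L^\infty$ gain-of-integrability estimate" yields a Bernoulli-type ODE
\begin{equation*}
\frac{d}{dt}\|\mathbb{H}(t)\|_{\linfnula} \leq \hat{B}_0\, \|\mathbb{H}(t)\|_{\linfnula}^{\theta} - \hat{A}_0\, \|\mathbb{H}(t)\|_{\linfnula}.
\end{equation*}
This does not follow, for two reasons. First, a pointwise differential inequality for $h_i(t,v)$ does not pass to a differential inequality for $\esssup_v h_i(t,v)$ — there is no dominated-convergence mechanism for $L^\infty$ and, more to the point, the paper's $L^\infty$ propagation never differentiates the $L^\infty$ norm. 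Second, the $L^\infty$ gain estimate available (Lemma \ref{lemma Q+ with separated b}) does not produce a term $\|\mathbb{H}\|_{\linfnula}^{\theta}$ with $\theta<1$: after the angular splitting $b_{ij}=\bone+\btwo$ of \eqref{b splitting bis}, it gives $\|\mathbb{Q}^+_{\varepsilon_{ij},1}\|_{\linfnula}\lesssim\|\F\|_{L^2_0}\|\G\|_{L^2_0}$ plus $\|\mathbb{Q}^+_{\varepsilon_{ij},2}\|_{\linfnula}\le C^{b_2}\|\F\|_{\linfnula}\|\G\|_{L^1_0}$, i.e. a bounded source (controlled by the already-propagated exponentially weighted $L^1$ and $L^2$ norms from Theorem \ref{theorem exp Lp} and \cite{IG-P-C}) plus a \emph{linear-in-}$\|\mathbb{H}\|_{\linfnula}$ remainder with small prefactor $C^{b_2}\le\xi$. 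The actual closing mechanism in the paper (repeating the argument of Theorem \ref{propagation of l inf norm}) is: multiply by the integrating factor $e^{C_L c_{lb}\langle v\rangle_i^{\bar\gamma}t}$, use the angular splitting, integrate in $t$, take $\esssup_v$ and sum in $i$ \emph{after} time integration, choose $\max_{i,j}\varepsilon_{ij}$ small enough that the $\tfrac14\sup_{0\le s\le t}\|\mathbb{H}(s)\|_{\linfnula}$ term can be absorbed into the left side, and close. Your proposal omits the angular splitting and the absorption step entirely, and replaces them with a Bernoulli ODE whose right-hand side does not correspond to the available estimates. The sublinear power $\theta$ and the constants $\hat A_0,\hat B_0$ from \eqref{const Bzero Azero} come from the $L^p$ duality/interpolation argument of Theorems \ref{trm qplus r fg} and \ref{Gen and prop Lp norms}, which is not the route taken for $p=\infty$.
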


\section{Estimate of the gain operator in $L^2$ framework} \label{Section Ltwo}
In this section we provide $L^2$ estimates for the gain term of the collision operator $\QFGp$ assuming that the angular part $b_{ij}(\hat{u} \cdot \sigma) \in L^\infty(\mathbb{S}^{N-1})$. First we work  in a space without weight, and then we add polynomial weight, following the strategy developed in \cite{IG-Alonso} for the single species case.
\subsection{Estimate of the gain operator in a plain $L^2$ space} As done in \cite{IG-Alonso} for the single specie case, we will start by stating and proving the $\ldvanula$ estimate for the gain part of the collision operator.
\begin{proposition}\label{proposition Q+ in L20}  Let $N\geq 3$ and let $\F$ and $\G$ be distribution functions, such that $\F \in \lnula$ and $\G \in  L^1_\frac{N-3}{1-\theta} \cap \ldvanula$. We will consider the transition probability terms  \eqref{cross section} with the angular part satisfying aditionally that
\begin{equation}\label{binLinf}
b_{ij}(\hat{u} \cdot \sigma) \in L^\infty(\mathbb{S}^{N-1}) \, \, \text{for any } \, 1\leq i,j \leq I.
\end{equation} Then the following estimate holds
	\begin{multline}\label{estimate for Ltwozero}
\left\| \QFGp\right\|_{\ldvanula} \leq \left( \sqrt{2 I \ko} \, \epsilon^{\bar{\gamma}}  \left\|\G\right\|_{\ldvanula} +  \sqrt{2 I \kN} \, \varepsilon^{2-N+\underline{\gamma}}  \left\|\G\right\|_{L^1_\frac{N-3}{1-\theta}}^{1-\theta} \left\|\G\right\|_{\ldvanula}^\theta \right) \left\|\F\right\|_{\lnula},
	\end{multline}
with $\theta=1/N$, $\varepsilon>0$, $\bar{\gamma}$ as in \eqref{sup bar gamma}, and
\begin{equation}\label{gamma bar}
\underline{\gamma}=\min_{1 \leq i, j \leq I}  \gamma_{ij},
\end{equation}
and the constants $\ko$ and $\kN$ are from Lemmas \ref{lemma estimate Q 0 in Ldva 0} and \ref{lemma estimate Q N-2 in Ldva 0} respectively. 
\end{proposition}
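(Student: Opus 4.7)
The plan is to follow the single-species strategy of \cite{IG-Alonso} and adapt it to the vector-valued setting in four steps: reduce the vector $L^2$ norm to pairwise contributions via a discrete Cauchy--Schwarz inequality in the species index, split the hard-potential factor $|u|^{\gamma_{ij}}$ by a cutoff at $|u|=\epsilon$, invoke the two auxiliary $L^2$ estimates for gain operators with constant-power kernels $|u|^0$ and $|u|^{N-2}$, and reassemble via H\"older in the species index.

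For the first step, since $[\QFGp]_i=\sum_{j=1}^I Q^+_{ij}(f_i,g_j)$, I use
$$\|\QFGp\|_{\ldvanula}^2 = \sum_{i=1}^I \Bigl\|\sum_{j=1}^I Q^+_{ij}(f_i,g_j)\Bigr\|_{2}^2 \leq I\sum_{i,j=1}^I \|Q^+_{ij}(f_i,g_j)\|_{2}^2.$$
For the second step I write $|u|^{\gamma_{ij}}=|u|^{\gamma_{ij}}\mathbf{1}_{|u|\leq\epsilon}+|u|^{\gamma_{ij}}\mathbf{1}_{|u|>\epsilon}$, bounding the first piece by $\epsilon^{\bar{\gamma}}$ and the second by $\epsilon^{\gamma_{ij}-(N-2)}|u|^{N-2}\leq \epsilon^{\underline{\gamma}-(N-2)}|u|^{N-2}$, which is valid because $\gamma_{ij}-(N-2)\leq 0$ for $N\geq 3$ and $\gamma_{ij}\leq 1$. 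This yields, for each pair $(i,j)$,
$$\|Q^+_{ij}(f_i,g_j)\|_{2}\leq \epsilon^{\bar{\gamma}}\|Q^+_{0,ij}(f_i,g_j)\|_{2}+\epsilon^{2-N+\underline{\gamma}}\|Q^+_{N-2,ij}(f_i,g_j)\|_{2},$$
where $Q^+_{0,ij}$ and $Q^+_{N-2,ij}$ denote the gain operators with cross sections $b_{ij}(\hat{u}\cdot\sigma)$ and $|u|^{N-2}b_{ij}(\hat{u}\cdot\sigma)$ respectively.

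For the third step I invoke Lemmas \ref{lemma estimate Q 0 in Ldva 0} and \ref{lemma estimate Q N-2 in Ldva 0}, which assert $\|Q^+_{0,ij}(f,g)\|_{2}\leq \sqrt{\ko}\,\|f\|_{1}\|g\|_{2}$ and, by interpolation in the Carleman representation with $\theta=1/N$, $\|Q^+_{N-2,ij}(f,g)\|_{2}\leq \sqrt{\kN}\,\|f\|_{1}\|g\|_{L^1_{(N-3)/(1-\theta),j}}^{1-\theta}\|g\|_{2}^{\theta}$; both make essential use of the assumption $b_{ij}\in L^\infty(\mathbb{S}^{N-1})$. For the fourth and final step I square the pairwise bound of Step 2 using $(a+b)^2\leq 2(a^2+b^2)$, sum over $(i,j)$ using $\sum_i\|f_i\|_{1}^2\leq(\sum_i\|f_i\|_{1})^2=\|\F\|_{\lnula}^2$ and $\sum_j\|g_j\|_{2}^2=\|\G\|_{\ldvanula}^2$, then apply H\"older with exponents $1/(1-\theta)$ and $1/\theta$ on $\sum_j\|g_j\|_{L^1_{(N-3)/(1-\theta),j}}^{2(1-\theta)}\|g_j\|_{2}^{2\theta}$ to collect $\|\G\|_{L^1_{(N-3)/(1-\theta)}}^{2(1-\theta)}\|\G\|_{\ldvanula}^{2\theta}$; multiplying by the factor $I$ from Step 1 and extracting the square root via $\sqrt{a+b}\leq\sqrt{a}+\sqrt{b}$ produces exactly \eqref{estimate for Ltwozero}.

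The main obstacle will lie in establishing the interpolation estimate of Lemma \ref{lemma estimate Q N-2 in Ldva 0} in the multispecies setting: the Carleman representation \eqref{carleman representation} carries the mass-ratio factor $(1-r_{ij})^{-N+1}$ and a Carleman kernel $|x-v|^{-1}|(v-x)/(2(1-r_{ij}))+z|^{2-N}$ whose fractional-integral character fixes the exponent $\theta=1/N$ and forces the weight order $(N-3)/(1-\theta)$ on the $L^1$ norm of $\G$, with the mass-ratio tracking being the genuinely new element compared to \cite{IG-Alonso}. Once those two building-block lemmas are in place, the summation assembly via discrete Cauchy--Schwarz and H\"older in the species index is entirely routine.
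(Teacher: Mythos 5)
Your proposal is correct and follows essentially the same route as the paper: split $|u|^{\gamma_{ij}}$ at the threshold $|u|=\epsilon$, bound the two pieces via the constant-kernel ($|u|^0$) and Carleman-fractional ($|u|^{N-2}$) gain operators, and reassemble with the species-index Cauchy--Schwarz factor $I$ and $(a+b)^2\le 2(a^2+b^2)$ to produce $\sqrt{2I\ko}$ and $\sqrt{2I\kN}$. The only notable rearrangement is that you apply the discrete Cauchy--Schwarz $\sum_i\bigl\|\sum_j Q^+_{ij}\bigr\|_2^2\le I\sum_{i,j}\|Q^+_{ij}\|_2^2$ up front and then invoke pairwise bounds $\|Q^+_{ij}(f,g)\|_2\le\sqrt{\ko}\|f\|_1\|g\|_2$ etc., whereas the paper first writes $\QFGp$ in kernel form, applies Minkowski's integral inequality to pull out $\int f_i(x)\,dx$, and only then distributes the $j$-sum; these are equivalent arrangements of the same ideas. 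One small imprecision: Lemmas \ref{lemma estimate Q 0 in Ldva 0} and \ref{lemma estimate Q N-2 in Ldva 0} as stated bound $\sum_j\|Q^+_{\cdot,ij}(\delta_0,g_j)\|_2^2$ rather than $\|Q^+_{\cdot,ij}(f,g)\|_2$; your pairwise form with general $f$ requires the intermediate kernel-form/Minkowski/translation-invariance step, which you do acknowledge implicitly when flagging the Carleman representation as the genuine obstacle, so this is a matter of wording rather than a gap.
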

\begin{proof}
Once the operator $\QFGp$ is written in a kernel form, from  Minkowski's integral inequality we get an estimate 
\begin{multline*}
\left\| \QFGp \right\|_{\ldvanula} \leq \sum_{i=1}^I \left( \int_{ v \in \mathbb{R}^N} \left(   \int_{ x \in \mathbb{R}^N} f_i(x) \, \K \,  \mathrm{d}x \right)^2 \mathrm{d}v \right)^{1/2} 
\\
\leq \sum_{i=1}^I \int_{ x \in \mathbb{R}^N} f_i(x)    \left( \int_{ v \in \mathbb{R}^N}  \, \K^2 \, \mathrm{d}v   \right)^{1/2} \mathrm{d}x.
\end{multline*}
The following step is to estimate
\begin{equation}\label{pomocna 3}
 \int_{ v \in \mathbb{R}^N}  \, \K^2 \, \mathrm{d}v \quad \text{for the specific choice of the cross section \eqref{cross section}.}
\end{equation}
Whenever explicitly needed, we will use an extra subindex to clarify the cross section we use, i.e. 
\begin{equation*}
\Kg := \K, \  \text{and} \ \Qpg(f,g)(v):=Q^+_{ij}(f,g)(v)  \ \text{with} \ \mathcal{B}_{ij} \text{\ from  \eqref{cross section}. } 
\end{equation*}
Note that notation $\Kg$ is ambiguous, since the kernel $K_i$ does not depend on $j$.
Then, since
$$
\left| u \right|^{\gamma_{ij}} \leq \varepsilon^{\gamma_{ij}} \mathds{1}_{\left| u \right|\leq \varepsilon} + \left| u \right|^{\gamma_{ij}}  \mathds{1}_{\left| u \right| > \varepsilon},
$$
\eqref{pomocna 3} becomes
\begin{multline*}
 \int_{ v \in \mathbb{R}^N}  \, \Kg^2 \, \mathrm{d}v  \leq 2  \varepsilon^{2{\bar{\gamma}}}   \int_{ v \in \mathbb{R}^N}  \, \Kn^2 \, \mathrm{d}v \\ + 2 \varepsilon^{2(2-N+{\underline{\gamma}})} \int_{ v \in \mathbb{R}^N}  \, \KNmdva^2 \, \mathrm{d}v
 \\
 \leq 2 I \varepsilon^{2{\bar{\gamma}}}   \sum_{j=1}^I \int_{ v \in \mathbb{R}^N} \left(\tau_x\Qpn(\delta_0,\tau_{-x}g_j)(v)\right)^2 \mathrm{d}v \\ + 2 I \varepsilon^{2(2-N+{\underline{\gamma}})}\sum_{j=1}^I \int_{ v \in \mathbb{R}^N} \left(\tau_x \QpNmdva(\delta_0,\tau_{-x}g_j)(v)\right)^2\mathrm{d}v,
\end{multline*}
since $2-N+\gamma_{ij}\leq 0$ for $N\geq 3$. 
The final estimate \eqref{estimate for Ltwozero} follows from the following two Lemmas.
\end{proof}
\begin{lemma}\label{lemma estimate Q N-2 in Ldva 0} Let $N\geq 3$ and denote $\G=\left[g_j\right]_{1\leq j \leq I}$, with $g_j(v)\geq 0$ for all $v \in \mathbb{R}^N$ and all $1 \leq j \leq I$. Assume that the cross section takes the form \eqref{cross section} with the angular part satisfying the extra assumption \eqref{binLinf}.
 Then the following estimate holds
\begin{equation}\label{estimate Q N-2}
\sum_{j=1}^I \int_{ v \in \mathbb{R}^N} \QpNmdva(\delta_0, g_j)(v)^2 \mathrm{d}v
\\  
\leq \kN \left\|  \G  \right\|_{L^1_\frac{N-3}{1-\theta}}^{2(1-\theta)}  \left\| \G  \right\|_{\ldvanula}^{2 \theta},
\end{equation}
with $\theta=\frac{1}{N}$,
\begin{equation*}
\kN = I^{1-\theta}\tilde{C}_N   2^{N-2} \left|\mathbb{S}^{N-2}\right|  \max_{1 \leq i,j \leq I} \left( \bb^2
\left(1-r_{ij}\right)^{-N}  \left( \sqrt{\frac{\sum_{i=1}^{I}m_i}{m_j}} \right)^{\!\!\!N-3} \right),
\end{equation*}

\begin{equation*}
\tilde{C}_N = \pi^{1/2} \frac{\Gamma(N/2-1/2)}{\Gamma(N-1/2)} \left(\frac{\Gamma(N/2)}{\Gamma(N)}\right)^{-1+1/N},
\end{equation*}
 and $r_{ij}$ as in \eqref{rij}.
\end{lemma}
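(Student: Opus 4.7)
The plan is to exploit the Carleman representation of Theorem~\ref{Th Carleman}, choosing the cross section as $\mathcal{B}_{ij}(|u|,\hat{u}\cdot\sigma)=|u|^{N-2}b_{ij}(\hat{u}\cdot\sigma)$, so that the factor $|u|^{2-N}$ appearing in \eqref{carleman representation} cancels exactly. This leaves a clean Radon-type integral of $g_j$ over hyperplanes orthogonal to $v$, to which one then applies an interpolation estimate between $L^1$ and $L^2$ with interpolation parameter $\theta=1/N$.

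First, specializing \eqref{carleman representation} to $f=\delta_0$ and to the cross section above gives
\[
Q^+_{N-2,ij}(\delta_0,g)(v) \;=\; (1-r_{ij})^{-N+1}\,\frac{1}{|v|}\int_{z\in\{v\}^{\perp}} g\!\left(\tfrac{v}{2(1-r_{ij})}+z\right)b_{ij}(\hat{u}\cdot\sigma)\,dz,
\]
and bounding $b_{ij}$ pointwise by $\|b_{ij}\|_{L^\infty(\mathbb{S}^{N-1})}$ reduces everything to an $L^2$ estimate for the affine Radon-type operator $v\mapsto |v|^{-1}\int_{z\perp v} g(v/(2(1-r_{ij}))+z)\,dz$. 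Passing to spherical coordinates $v=r\omega$ and to the scaled variable $s=r/(2(1-r_{ij}))$ converts the squared $L^2$ norm into
\[
\|b_{ij}\|_{L^\infty}^2\,(1-r_{ij})^{-N}\int_{\mathbb{S}^{N-1}}\int_0^\infty s^{N-3}(R^+g)(\omega,s)^2\,ds\,d\omega,\qquad (R^+g)(\omega,s):=\int_{z\perp\omega}g(s\omega+z)\,dz.
\]

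Second, I would establish the intermediate bound
\[
\int_{\mathbb{S}^{N-1}}\int_0^\infty s^{N-3}(R^+g)(\omega,s)^2\,ds\,d\omega \;\lesssim\; \widetilde{C}_N\,2^{N-2}|\mathbb{S}^{N-2}|\,\Bigl(\tfrac{\sum_i m_i}{m_j}\Bigr)^{\!\frac{N-3}{2}}\,\|g\|_{L^{2N/(2N-1)}_{N-3,j}}^2,
\]
by applying Hölder's inequality on each hyperplane $\{z\perp\omega\}$ with conjugate exponents $(2N/(2N-1),\,2N)$ against the polynomial weight $\langle y\rangle_j^{a}$, $y=s\omega+z$, the exponent $a$ chosen so that $\langle\cdot\rangle_j^{-a}\in L^{2N}(\{z\perp\omega\})$. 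The dual integral reduces to a one-dimensional Beta integral, producing the $\Gamma$-factor constant $\widetilde{C}_N$; the non-isotropic shape of $\langle\cdot\rangle_j=\sqrt{1+(m_j/\sum_i m_i)|\cdot|^2}$ is precisely what yields the mass ratio $(\sum_i m_i/m_j)^{(N-3)/2}$ after the Beta-function rescaling.

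Third, by log-convexity of weighted Lebesgue norms with $\theta=1/N$,
\[
\|g\|_{L^{2N/(2N-1)}_{N-3,j}} \;\leq\; \|g\|_{L^1_{(N-3)/(1-\theta),\,j}}^{\,1-\theta}\,\|g\|_{L^2_{0,j}}^{\,\theta},
\]
since $1/(2N/(2N-1))=(1-\theta)\cdot 1+\theta\cdot(1/2)$ and $(N-3)=(1-\theta)\cdot\frac{N-3}{1-\theta}+\theta\cdot 0$. Squaring the resulting bound for each $j$, summing over $j=1,\dots,I$, and applying Hölder in the counting measure with exponents $1/(1-\theta)$ and $1/\theta$ produces the factor $I^{1-\theta}$ and replaces the component norms by the vector norms $\|\G\|_{L^1_{(N-3)/(1-\theta)}}^{2(1-\theta)}\|\G\|_{L^2_0}^{2\theta}$ of \eqref{estimate Q N-2}.

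The main hurdle will be the second step: calibrating the weight exponent $a$ so that (i) the dual is integrable on each hyperplane, (ii) the residual $|v|$-dependence integrates correctly against $|v|^{-2}\,dv$ in polar coordinates, and (iii) all the mass-dependent constants, in particular $(1-r_{ij})^{-N}$ from the Jacobian and $(\sum_i m_i/m_j)^{(N-3)/2}$ from the anisotropy of $\langle\cdot\rangle_j$, appear with exactly the powers listed in $k_N$. The remaining bookkeeping (the maximum over $i,j$ for $\|b_{ij}\|_{L^\infty}^2(1-r_{ij})^{-N}$, and the $I^{1-\theta}$ coming from Hölder over species) is then routine.
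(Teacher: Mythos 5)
Your first and third steps track the paper's argument closely: specializing the Carleman representation to $f=\delta_0$ with $\lambda_{ij}=N-2$, bounding $b_{ij}$ pointwise, passing to $v=r\omega$ and scaling $r\mapsto s=r/(2(1-r_{ij}))$ to pick up the $(1-r_{ij})^{-N}2^{N-2}$ factor, and then finishing with log-convexity of $L^p$ norms at $\theta=1/N$ and a H\"older bound in the species index yielding $I^{1-\theta}$. These parts are right.

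The second step, which you yourself flag as the main hurdle, is where the argument breaks. The paper's route at that point is to rewrite the squared Radon-type integral as a genuine bilinear form: it combines $z_1\in\{\omega\}^{\perp}$ and $s\omega$ into $y\in\mathbb{R}^N$, translates $z_2\mapsto z$, bounds $(y\cdot\omega)^{N-3}\leq|y|^{N-3}$, and uses the identity $\int_{\mathbb{S}^{N-1}}\delta_0(\omega\cdot z)\,d\omega=|\mathbb{S}^{N-2}|/|z|$ to arrive at
\begin{equation*}
|\mathbb{S}^{N-2}|\int_{\mathbb{R}^N}\!\int_{\mathbb{R}^N}\frac{|y|^{N-3}\,g_j(y)\,g_j(z)}{|y-z|}\,dy\,dz,
\end{equation*}
which is then bounded by the \emph{sharp} Hardy--Littlewood--Sobolev inequality. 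The constant $\tilde{C}_N=\pi^{1/2}\,\Gamma(N/2-1/2)\,\Gamma(N-1/2)^{-1}\,(\Gamma(N/2)/\Gamma(N))^{-1+1/N}$ is precisely the sharp HLS constant of Lieb--Loss for the $\lambda=1$, $p=2N/(2N-1)$ diagonal case. Your sketch instead applies H\"older on each hyperplane $\{z\perp\omega\}$ with exponents $(2N/(2N-1),\,2N)$ and claims a ``Beta integral'' delivers $\tilde{C}_N$. Two problems. First, after that H\"older step the slice norm $\|h(s\omega+\cdot)\|_{L^{2N/(2N-1)}(\{\omega\}^\perp)}$ appears raised to the power $2\cdot(2N-1)/(2N)=(2N-1)/N>1$ inside the $ds\,d\omega$ integral; since $2N/(2N-1)<2$, this super-linear power does not Fubini back into $\|h\|_{L^{2N/(2N-1)}(\mathbb{R}^N)}^2$ by either Fubini, Minkowski, or Jensen, and the residual $s$-dependence from the dual weight does not resolve this exponent mismatch. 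Second, even granting a finite bound, the claimed constant is unreachable by your method: $\tilde{C}_N$ is the sharp HLS constant obtained via rearrangement (Lieb's theorem), and a slice-by-slice H\"older computation produces a different, generally larger constant. To match the Lemma as stated you need the paper's conversion to a bilinear Riesz-potential form and the sharp HLS inequality; the ingredient you are missing is exactly that conversion (via the Dirac delta angular representation) and the invocation of HLS in place of H\"older on slices.
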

\begin{proof}
Since the angular part $b_{ij}$ of the cross section is assumed bounded, we can write
\begin{multline*}
 \int_{ v \in \mathbb{R}^N}  \QpNmdva(\delta_0,g_j)(v) ^2 \mathrm{d}v \\ \leq \bb^2
 \left(1-r_{ij}\right)^{2(1-N)} \int_{ v \in \mathbb{R}^N} \frac{1}{\left|v\right|^2}  \left( \int_{z \in \left\{ v \right\}^\perp} g_j\left(\frac{1}{2(1-r_{ij})} v + z\right)  \mathrm{d}z \right)^2   \mathrm{d}v.
\end{multline*}
For $v \in \mathbb{R}^N$ we pass to its spherical coordinates $r \omega$, with $r=\left|v\right| \in \mathbb{R}$ and $\omega \in \mathbb{S}^{N-1}$, and then change $r\mapsto s = \frac{1}{2(1-r_{ij})} r $, so that  the integral becomes
\begin{multline*}
\int_{ v \in \mathbb{R}^N}  \QpNmdva(\delta_0,g_j)(v) ^2 \mathrm{d}v \\ \leq \bb^2  2^{N-2} 
\left(1-r_{ij}\right)^{-N} \int_{ \omega \in \mathbb{S}^{N-1}} \int_{s\in \mathbb{R}} s^{N-3}\left( \int_{z_1\in \left\{ \omega \right\}^\perp} g_j\left(s \omega + z_1\right)  \mathrm{d}z_1 \right) \\ \times \left( \int_{z_2\in \left\{ \omega \right\}^\perp} g_j\left(s\omega + z_2\right)  \mathrm{d}z_2 \right) \mathrm{d}s \, \mathrm{d}\omega.
\end{multline*}
For fixed $\omega$, we combine integration with respect to $z_1 \in \left\{\omega \right\}^\perp $ and in the direction of $\omega$ with magnitude $s $ to form an integration in $\mathbb{R}^N$ with respect to the new variable $y= z_1 + s \omega$. Calculating $y\cdot\omega=s$, we have
\begin{multline*}
\int_{ v \in \mathbb{R}^N}  \QpNmdva(\delta_0,g_j)(v) ^2 \mathrm{d}v \\ \leq \bb^2  2^{N-2} 
\left(1-r_{ij}\right)^{-N} \int_{ \omega \in \mathbb{S}^{N-1}} \int_{y\in \mathbb{R}^N} (y \cdot \omega)^{N-3} g_j(y) \\  \times \left( \int_{z_2\in \left\{ \omega \right\}^\perp} g_j\left( (y \cdot \omega)\omega + z_2\right)  \mathrm{d}z_2 \right) \mathrm{d}y \, \mathrm{d}\omega.
\end{multline*}
Moreover, in the last integral, we change $z_2 \mapsto z=z_2 + (y\cdot \omega)\omega - y$, and noting that $z$ still belongs to the same space as $z_2$ because $z\cdot \omega =0$, we have 
\begin{multline*}
\int_{ v \in \mathbb{R}^N}  \QpNmdva(\delta_0,g_j)(v) ^2 \mathrm{d}v \\ \leq \bb^2  2^{N-2} 
\left(1-r_{ij}\right)^{-N} \int_{ \omega \in \mathbb{S}^{N-1}} \int_{y\in \mathbb{R}^N} (y \cdot \omega)^{N-3} g_j(y) \\  \times \left( \int_{z\in \left\{ \omega \right\}^\perp} g_j( y+z)  \mathrm{d}z \right) \mathrm{d}y \mathrm{d}\omega.
\end{multline*}
Then bounding the term $y\cdot \omega \leq \left|y\right|$, for $N\geq 3$, and using the representation with the Dirac delta function, we have
\begin{multline*}
\int_{ v \in \mathbb{R}^N}  \QpNmdva(\delta_0,g_j)(v) ^2 \mathrm{d}v  \leq \bb^2  2^{N-2} 
\left(1-r_{ij}\right)^{-N} \\ \times \int_{ \omega \in \mathbb{S}^{N-1}} \int_{y\in \mathbb{R}^N}  \int_{z\in \mathbb{R}^N}  \left|y\right|^{N-3} g_j(y) \, g_j( y+z) \,  \delta_0( \omega \cdot z) \, \mathrm{d}z \, \mathrm{d}y \, \mathrm{d}\omega.
\end{multline*}
Calculating,
\begin{equation*}
 \int_{ \omega \in \mathbb{S}^{N-1}}  \delta_0( \omega \cdot z) \mathrm{d}\omega = \frac{\left|\mathbb{S}^{N-2}\right|}{\left|z\right|},
\end{equation*}
we  get
\begin{multline*}
\int_{ v \in \mathbb{R}^N}  \QpNmdva(\delta_0,g_j)(v) ^2 \mathrm{d}v \\ \leq \bb^2  2^{N-2} 
\left(1-r_{ij}\right)^{-N}  \left|\mathbb{S}^{N-2}\right|   \int_{y\in \mathbb{R}^N}  \int_{z\in \mathbb{R}^N}  \left|y\right|^{N-3} \frac{g_j(y) \, g_j( y+z)}{ \left|z\right|}\mathrm{d}z \mathrm{d}y.
\end{multline*}
We can bound $\left|y\right|$ in terms of its $j-$th Lebesgue weight,
\begin{equation}\label{abs value in terms of jap brac}
\left|y\right| \leq \japyj \left( \sqrt{\frac{\sum_{i=1}^{I}m_i}{m_j}} \right),
\end{equation}
and then denote
\begin{equation*}
\tilde{g}_j(y)  = g_j(y) \japyj^{N-3},
\end{equation*}
so that the integral becomes, after translation in $z$ variable  
\begin{equation*}
\int_{ v \in \mathbb{R}^N}  \QpNmdva(\delta_0,g_j)(v) ^2 \mathrm{d}v \leq 
 C_{N, ij} \int_{y\in \mathbb{R}^N}  \int_{z\in \mathbb{R}^N}  \frac{\tilde{g}_j(y) \, g_j(z)}{\left|z-y\right|} \mathrm{d}z \mathrm{d}y,
 \end{equation*}
with
\begin{equation}\label{CNij}
C_{N, ij} =   \bb^2  2^{N-2} \left|\mathbb{S}^{N-2}\right| 
\left(1-r_{ij}\right)^{-N}\left( \sqrt{\frac{\sum_{i=1}^{I}m_i}{m_j}} \right)^{N-3}.
\end{equation}
Applying Hardy-Littlewood-Sobolev inequality, we obtain
\begin{multline*}
 \int_{y\in \mathbb{R}^N}  \int_{z\in \mathbb{R}^N}  \frac{\tilde{g}_j(y) \, g_j(z)}{\left|z-y\right|} \mathrm{d}z \mathrm{d}y 
 \\
 \leq \tilde{C}_N \left(  \int_{y\in \mathbb{R}^N}  \tilde{g}_j(y)^{\frac{2N}{2N-1}}  \mathrm{d}y  \int_{z\in \mathbb{R}^N}   g_j(z)^{\frac{2N}{2N-1}} \mathrm{d}z\right)^{\frac{2N-1}{2N}}
 \\
 \leq \tilde{C}_N \left(  \int_{y\in \mathbb{R}^N}  \tilde{g}_j(y)^{\frac{2N}{2N-1}}  \mathrm{d}y  \right)^{\frac{2N-1}{N}},
\end{multline*}
with the exact constant, as proved in \cite[Theorem 4.3]{L-L}, given by
\begin{equation}\label{CtildaN}
\tilde{C}_N = \pi^{1/2} \frac{\Gamma(N/2-1/2)}{\Gamma(N-1/2)} \left(\frac{\Gamma(N/2)}{\Gamma(N)}\right)^{-1+1/N}.
\end{equation}
Then using log-convexity of $L^p$ norms, 
\begin{multline*}
 \int_{y\in \mathbb{R}^N} \left( {g}_j(y)\japvj^{N-3} \right)^{\frac{2N}{2N-1}}  \mathrm{d}y 
\\ \leq \left( \int_{y\in \mathbb{R}^N}  {g}_j(y)\japvj^\frac{N-3}{1-\theta}   \mathrm{d}y \right)^{\frac{2N(1-\theta)}{2N-1}}  \left( \int_{x\in \mathbb{R}^N}  {g}_j(x)^2   \mathrm{d}x \right)^{\frac{N \theta}{2N-1}}, \quad \theta:= \frac{1}{N}. 
\end{multline*}
Therefore,
\begin{multline*}
\int_{ v \in \mathbb{R}^N}  \QpNmdva(\delta_0,g_j)(v) ^2 \mathrm{d}v \\\leq 
C_{N,ij} \tilde{C}_N \left( \int_{y\in \mathbb{R}^N}  {g}_j(y)\japvj^\frac{N-3}{1-\theta}   \mathrm{d}y \right)^{2(1-\theta)}  \left( \int_{x\in \mathbb{R}^N}  {g}_j(x)^2   \mathrm{d}x \right)^{\theta}, \quad \theta= \frac{1}{N}.
\end{multline*}
In order to represent this estimate in norm notation, we sum the last inequality over $j=1, \dots, I$, obtaining
\begin{multline*}
\sum_{j=1}^I \int_{ v \in \mathbb{R}^N} \QpNmdva(\delta_0, g_j)(v)^2 \mathrm{d}v  
\leq  I^{1-\theta} \tilde{C}_N \max_{1 \leq i,j \leq I} (C_{N, ij}) \\
 \times \left( \sum_{j=1}^I \int_{y\in \mathbb{R}^N}  {g}_j(y)\japvj^\frac{N-3}{1-\theta}   \mathrm{d}y \right)^{2(1-\theta)} \left(  \sum_{j=1}^I \int_{x\in \mathbb{R}^N}  {g}_j(x)^2   \mathrm{d}x \right)^{\theta},
\end{multline*}
since $2(1-\theta) \geq 1 $ and $\theta<1$ for $\theta= \frac{1}{N}$ and $N\geq2$.  Using the norm notation \eqref{norm definition}, we finally get estimate \eqref{estimate Q N-2}.
\end{proof}

\begin{lemma}\label{lemma estimate Q 0 in Ldva 0} Assume $N\geq 3$ and the cross section in the form \eqref{cross section} with the angular part satisfying \eqref{binLinf}. Let $\G=\left[g_j\right]_{1\leq j \leq I}$, with $g_j(v)\geq 0$ for all $v \in \mathbb{R}^N$ and all $1 \leq j \leq I$. Then the following estimate holds
	\begin{equation*}
	\sum_{j=1}^I \int_{ \mathbb{R}^N} \Qpn(\delta_0, g_j)(v)^2 \mathrm{d}v\leq \ko \left\| \G \right\|_{\ldvanula},
	\end{equation*}
	with a constant  $\ko = \max_{1 \leq i,j \leq I} \left(  \bb^2 \left(C_N^{ij}\right)^2 \right)$ and
	\begin{equation*}
	C_N^{ij}=\left|\mathbb{S}^{N-2}\right| \int_{-1}^1 \left(\sqrt{2} (1-r_{ij})( 1+ \mu )\right)^{-N/2} \left(1-\mu^2\right)^{\frac{N-3}{2}} \mathrm{d}\mu.
	\end{equation*}
\end{lemma}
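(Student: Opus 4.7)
The plan is to prove, for each fixed pair $(i,j)$, the bound
\begin{equation*}
\int_{\mathbb{R}^N}\Qpn(\delta_0, g)(v)^2\,\md v \;\leq\; \bb^2 \bigl(C_N^{ij}\bigr)^{2}\,\|g\|_{2}^{2},
\end{equation*}
and then sum over $j$ and absorb constants into $\ko = \max_{i,j}\bigl(\bb^2(C_N^{ij})^2\bigr)$. The structure of the argument parallels the proof of Lemma~\ref{lemma estimate Q N-2 in Ldva 0}, except that in the present $\gamma_{ij}=0$ case the Hardy--Littlewood--Sobolev step is replaced by a direct sphere parametrisation, and it is precisely this parametrisation that produces the constant $C_N^{ij}$.

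First I would specialise the Carleman representation (as in the kernel form lemma) to the cross section $\mathcal{B}_{ij}=b_{ij}(\hat u\cdot\sigma)$, obtaining
\begin{equation*}
\Qpn(\delta_0, g)(v)\;\leq\; \bb\,(1-r_{ij})^{-N+1}|v|^{-1}\int_{\{v\}^\perp}g(y)\,|y|^{2-N}\,\md z,\qquad y=\tfrac{v}{2(1-r_{ij})}+z.
\end{equation*}
Squaring and applying Cauchy--Schwarz on the hyperplane integral with an auxiliary power weight $|y|^{a}$ separates a $g^{2}$-factor from a purely geometric one, the latter being explicitly computable by polar coordinates on $\{v\}^\perp$ in terms of $|v|$ and a Beta integral.

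Then, to integrate in $v$, I would use the $\delta$-function representation $\int_{\{v\}^\perp}h(y)\,\md z = |v|\int_{\mathbb{R}^N}h(y)\,\delta\bigl(y\cdot v-\tfrac{|v|^2}{2(1-r_{ij})}\bigr)\md y$ and swap orders by Fubini. For each fixed $y$, the support of the delta is the $(N-1)$-sphere $\{v:\,|v-(1-r_{ij})y|=(1-r_{ij})|y|\}$, which passes through the origin; parametrising it by $\tau\in\mathbb{S}^{N-1}$ via $v=(1-r_{ij})(y+|y|\tau)$ yields $|v|^{2}=2(1-r_{ij})^{2}|y|^{2}(1+\hat y\cdot\tau)$ and surface element $(1-r_{ij})^{N-1}|y|^{N-2}\md\tau$. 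With the right choice of $a$, the $|y|$-powers collapse, leaving $\|g\|_{2}^{2}$ multiplied by a rotation-invariant integral on $\mathbb{S}^{N-1}$ that reduces, via spherical coordinates with $\hat y$ as pole and $\mu=\hat y\cdot\tau$, to the one-dimensional integral defining $C_N^{ij}$.

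The main obstacle is the precise choice of the Cauchy--Schwarz exponent $a$: it must simultaneously keep the geometric hyperplane integral convergent, force exact cancellation of $|y|$-powers under the Fubini and sphere steps so that only $\|g\|_{2}^{2}$ survives (with no residual polynomial weight on $g$), and reproduce the prefactor $(\sqrt{2}(1-r_{ij})(1+\mu))^{-N/2}$ appearing in the definition of $C_N^{ij}$. The symmetrisation assumption $\mathrm{supp}\,b_{ij}\subset[0,1]$ is used when interpreting the resulting $\mu$-integral. Summing the pointwise-in-$g$ bound over $j$ and taking the maximum of $\bb^{2}(C_N^{ij})^{2}$ over all $(i,j)$ then yields $\ko$ and completes the argument.
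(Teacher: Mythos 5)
Your route is genuinely different from the paper's. The paper works with the \emph{weak} form of $Q^+_{0,ij}(\delta_0,\cdot)$: it applies H\"older in $g_j$ first, reduces the remaining $\sigma$--integral of a radial test function $\psi$ to a one--dimensional $\mu$--integral, pulls that $\mu$--integral out of the $L^2(\mathrm{d}v)$ norm by Minkowski, and performs a dilation in $v$ for each fixed $\mu$; the constant $C_N^{ij}$ then appears \emph{linearly} in the bilinear estimate $\bigl|\int Q^+_{0,ij}(\delta_0,g_j)\psi\bigr|\le\bb\,C_N^{ij}\|g_j\|_2\|\psi\|_2$, and $(C_N^{ij})^2$ emerges only after squaring the operator bound that duality produces. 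Your proposal instead works on the Carleman kernel in strong form, squares, applies Cauchy--Schwarz across the hyperplane integral with an auxiliary weight $|y|^a$, and passes to a $\delta$--function/Fubini/sphere parametrisation. The geometry you describe is right: the support sphere $|v-(1-r_{ij})y|=(1-r_{ij})|y|$ does pass through the origin, $|v|^2=2(1-r_{ij})^2|y|^2(1+\hat y\cdot\tau)$, and the coarea factor is $|y|^{-1}$. It is also worth noticing that the $|y|$--powers collapse for \emph{every} $a$ (the exponents $ (4-2N+a)+(N-2)+(N-2-a)=0$), so the tuning constraint you flag as the ``main obstacle'' is only about matching the $(1+\mu)$--exponent and convergence of $\int_{\{v\}^\perp}|y|^{-a}\mathrm{d}z$ (which needs $a>N-1$).

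The genuine gap is that your method cannot reproduce the stated constant. Taking $a=2N-2$, the only choice giving the exponent $-N/2$ on $(1+\mu)$, one finds
\begin{equation*}
\int_{\mathbb{R}^N}Q^+_{0,ij}(\delta_0,g)^2\,\mathrm{d}v
\;\le\;
\bb^2(1-r_{ij})^{-N}\,2^{\frac{N-2}{2}}\Bigl(\textstyle\int_{\mathbb{R}^{N-1}}(1+|w|^2)^{-(N-1)}\mathrm{d}w\Bigr)\,|\mathbb{S}^{N-2}|\int_{-1}^1(1+\mu)^{-\frac{N}{2}}(1-\mu^2)^{\frac{N-3}{2}}\mathrm{d}\mu\;\|g\|_2^2,
\end{equation*}
that is, a \emph{single} $\mu$--integral multiplied by the auxiliary Beta constant, with $|\mathbb{S}^{N-2}|$ to the first power and $2^{(N-2)/2}$; whereas $\bb^2(C_N^{ij})^2=\bb^2\,2^{-N/2}(1-r_{ij})^{-N}|\mathbb{S}^{N-2}|^2\bigl(\int_{-1}^1(1+\mu)^{-N/2}(1-\mu^2)^{(N-3)/2}\mathrm{d}\mu\bigr)^2$ has the $\mu$--integral \emph{squared}, $|\mathbb{S}^{N-2}|^2$, and $2^{-N/2}$. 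No choice of $a$ in a pointwise Cauchy--Schwarz can produce a squared one--dimensional integral; that structure comes from duality. So your plan yields a bound of the right shape with some finite constant, but not the lemma's explicit $\ko$, and the opening claim that you will obtain $\bb^2(C_N^{ij})^2$ is not reached by the steps you describe. A secondary point: once $b_{ij}$ is bounded by $\bb$ in the kernel, the support restriction $\mathrm{supp}\,b_{ij}\subset[0,1]$ no longer truncates the $\mu$--range (the Carleman angular argument becomes $-\hat y\cdot\tau$, whose nonnegativity still allows $\mu\to-1$), so invoking symmetrisation at the end to tame the $(1+\mu)^{-3/2}$ singularity does not do what you hope.
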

\begin{proof}
The proof starts with  the weak form of the gain term \eqref{weak collision operator gain}, after a rotation  $\sigma \mapsto - \sigma$,
\begin{multline*}
\int_{ \mathbb{R}^N} \Qpn(\delta_0,g_j)(v) \, \psi(v) \, \mathrm{d}v\\ \leq \bb  \int_{\mathbb{R}^N}\int_{\mathbb{R}^N}\int_{ \mathbb{S}^{N-1}} \delta_0(v_*) \, g_j(v) \,  \psi(v_*- (1-r_{ij}) (\left|u\right| \sigma - u )) \, \mathrm{d} \sigma  \, \mathrm{d}v  \, \mathrm{d}u
\\
= \bb  \int_{\mathbb{R}^N} g_j(v)  \int_{ \mathbb{S}^{N-1}}  \psi( (1-r_{ij}) (\left|v\right| \sigma + v )) \, \mathrm{d} \sigma  \, \mathrm{d}v.
\end{multline*}
Applying H\"{o}lder inequality,  
\begin{multline}\label{pomocna 1}
\int_{ \mathbb{R}^N} \Qpn(\delta_0,g_j)(v) \, \psi(v) \, \mathrm{d}v \leq \bb \left( \int_{\mathbb{R}^N} g_j(v)^2 \mathrm{d}v \right)^{1/2} \\ 
\times \left( \int_{ \mathbb{R}^N} \left( \int_{ \mathbb{S}^{N-1}}  \psi( (1-r_{ij}) (\left|v\right| \sigma + v )) \, \mathrm{d} \sigma \right)^2 \, \mathrm{d}v \right)^{1/2}.
\end{multline}
Assuming $\psi$ is a radial function, that is 
$$ \psi\left( (1-r_{ij}) (\left|v\right| \sigma + v )\right) =\psi\left( \sqrt{2} (1-r_{ij}) \left|v\right|( 1+ \sigma \cdot \hat{v} )\right),$$
 the integration with respect to $\sigma$ can be simplified
\begin{multline*}
 \int_{ \mathbb{S}^{N-1}}  \psi( (1-r_{ij}) (\left|v\right| \sigma + v )) \, \mathrm{d} \sigma \\= \left|\mathbb{S}^{N-2}\right| \int_{-1}^1 \psi\left( \sqrt{2} (1-r_{ij}) \left|v\right|( 1+ \mu )\right) \left(1-\mu^2\right)^{\frac{N-3}{2}} \mathrm{d}\mu.
\end{multline*}
Then, by the Minkowski inequality,
\begin{multline*}
 \left|\mathbb{S}^{N-2}\right|  \left( \int_{ \mathbb{R}^N} \left( \int_{-1}^1 \psi\left( \sqrt{2} (1-r_{ij}) \left|v\right|( 1+ \mu )\right) \left(1-\mu^2\right)^{\frac{N-3}{2}} \mathrm{d}\mu \right)^2 \, \mathrm{d}v \right)^{1/2}
 \\
 \leq \left|\mathbb{S}^{N-2}\right|   \int_{-1}^1  \left(  \int_{ \mathbb{R}^N}  \psi\left( \sqrt{2} (1-r_{ij}) \left|v\right|( 1+ \mu )\right)^2   \mathrm{d}v \right)^{1/2} \left(1-\mu^2\right)^{\frac{N-3}{2}} \mathrm{d}\mu
 \\
 =    C_N^{ij}  \left(  \int_{ \mathbb{R}^{N}}  \psi(\left|v\right|)^2    \mathrm{d}v \right)^{1/2}
\end{multline*}
with 
\begin{equation*}
C_N^{ij}=\left|\mathbb{S}^{N-2}\right| \int_{-1}^1 \left(\sqrt{2} (1-r_{ij})( 1+ \mu )\right)^{-N/2} \left(1-\mu^2\right)^{\frac{N-3}{2}} \mathrm{d}\mu.
\end{equation*}
Therefore, \eqref{pomocna 1} becomes
\begin{multline*}
\int_{ \mathbb{R}^N} \Qpn(\delta_0,g_j)(v) \, \psi(v) \, \mathrm{d}v\\ 
\leq \bb C_N^{ij} \left( \int_{\mathbb{R}^N} g_j(v)^2 \mathrm{d}v \right)^{1/2}  \left(  \int_{ \mathbb{R}^{N}}  \psi(\left|v\right|)^2    \mathrm{d}v \right)^{1/2}.
\end{multline*}
We finally get
\begin{multline*}
	\sum_{j=1}^I \int_{ v \in \mathbb{R}^N} \Qpn(\delta_0, g_j)(v)^2 \mathrm{d}v
	\\ \leq   \max_{1 \leq i,j \leq I} \left(  \bb^2 \left(C_N^{ij}\right)^2 \right)  \sum_{j=1}^I \int_{\mathbb{R}^N} g_j(v)^2 \mathrm{d}v \\ =  \max_{1 \leq i,j \leq I} \left(  \bb^2 \left(C_N^{ij}\right)^2 \right) \left\| \G \right\|_{\ldvanula},
\end{multline*}
which concludes the proof.
\end{proof}

Therefore, with the completion of these two lemmas \ref{lemma estimate Q N-2 in Ldva 0} and \ref{lemma estimate Q 0 in Ldva 0}, the proof of \eqref{estimate for Ltwozero} from proposition \ref{proposition Q+ in L20} is completed.

\subsection{Estimate of the gain operator in a polynomially weighted $L^2$ space}The  estimate of the gain operator $ \QFGp$ in $\ldvak$ norm makes use of the estimate in  $\ldvanula$ and the following inequality 
\begin{equation*}
\japvi \leq \japvip \japvjps,
\end{equation*}
which immediately follows from the conservation law of kinetic energy \eqref{CL micro}. Indeed, it holds
\begin{equation*}
\left\| \QFGp\right\|_{\ldvak} \leq \left\| \QFGptil\right\|_{\ldvanula},
\end{equation*}
where 
$$
[\tilde{\F}]_i(v) =  f_i(v) \japvi^k, \quad i=1,\dots,I,
$$
having in mind $\left[\F\right]_i(v) = f_i(v)$. Then we apply the result of Proposition \ref{proposition Q+ in L20} to $ \QFGptil$.
Therefore, the following Proposition holds.
\begin{proposition} \label{prop ltwok qplus} Let $N\geq 3$ and let $\F$ and $\G$ be distribution functions, such that  $\F \in \lk$ and $\G \in  L^1_{\frac{N-3}{1-\theta}+k} \cap \ldvak$. For the transition probability terms choose \eqref{cross section},  where $b_{ij} \in L^\infty(\mathbb{S}^{N-1})$. Then the following estimate holds
	\begin{multline*}
	\left\| \QFGp\right\|_{\ldvak} \leq \left( \sqrt{2 I \ko} \,  \epsilon^{\bar{\gamma}}  \left\|\G\right\|_{\ldvak} 
\right.	\\ \left.
	+  \sqrt{2 I \kN} \,  \varepsilon^{2(2-N+{\underline{\gamma}})} \left\|\G\right\|_{L^1_{\frac{N-3}{1-\theta}+k}}^{1-\theta} \left\|\G\right\|_{\ldvak}^\theta \right) \left\|\F\right\|_{\lk},
	\end{multline*}
	where the constants are identical to those in Proposition \ref{proposition Q+ in L20}.
\end{proposition}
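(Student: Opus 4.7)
The plan is to reduce the weighted $\ldvak$ estimate directly to the unweighted $\ldvanula$ estimate of Proposition~\ref{proposition Q+ in L20}, exactly as the paragraph preceding the statement suggests. The key ingredient is the pointwise inequality
$$\japvi \;\le\; \japvip \, \japvjps,$$
which is a consequence of the microscopic conservation of kinetic energy \eqref{CL micro}, together with the renormalization of the Lebesgue bracket $\japvi$ by the mass ratio $m_i/\sum_k m_k$. Indeed $m_i|v|^2 \le m_i|v|^2 + m_j|v_*|^2 = m_i|v'|^2 + m_j|v'_*|^2$; dividing by $\sum_k m_k$, adding $1$, and using $1+a+b\le (1+a)(1+b)$ for $a,b\ge 0$ yields the claim.

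Introduce auxiliary vector-valued densities $\tilde{\F}$ and $\tilde{\G}$ componentwise by $[\tilde{\F}]_i(v):= f_i(v)\japvi^k$ and $[\tilde{\G}]_j(v):= g_j(v)\japvj^k$. Inserting the weight inequality inside the integrand of \eqref{gain term vector}, and keeping track that the outgoing species $\mathcal{A}_i$ particle carries velocity $v'$ and the outgoing species $\mathcal{A}_j$ particle carries velocity $v'_*$, leads to the pointwise bound
$$\bigl[\QFGp\bigr]_i(v)\,\japvi^k \;\le\; \sum_{j=1}^I \int_{\mathbb{R}^N}\!\!\int_{\mathbb{S}^{N-1}} f_i(v')\japvip^k\, g_j(v'_*)\japvjps^k\, \mathcal{B}_{ij}(|u|,\hat{u}\cdot\sigma)\,\mathrm{d}\sigma\,\mathrm{d}v_* \;=\; \bigl[\QFGptil\bigr]_i(v).$$
Squaring, integrating against $\mathrm{d}v$, and summing in $i$ yields $\|\QFGp\|_{\ldvak} \le \|\QFGptil\|_{\ldvanula}$.

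Next, I would apply Proposition~\ref{proposition Q+ in L20} to the pair $(\tilde{\F},\tilde{\G})$. It remains only to translate the norms on the right-hand side back to the original densities. Since $\japvi^k\ge 0$ merges with the inherent weight in each component's norm, one has the identifications $\|\tilde{\F}\|_{\lnula}=\|\F\|_{\lk}$, $\|\tilde{\G}\|_{\ldvanula}=\|\G\|_{\ldvak}$ and $\|\tilde{\G}\|_{L^1_{(N-3)/(1-\theta)}}=\|\G\|_{L^1_{(N-3)/(1-\theta)+k}}$. Substituting these identities produces the estimate in the statement with exactly the same constants $\ko$ and $\kN$.

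I do not expect a genuine obstacle: the only nontrivial point is the weight factorization inequality, and this is available cleanly precisely because of the species-dependent renormalization of $\japvi$ inherited from \cite{IG-P-C}. Had one worked with the unnormalized bracket $\sqrt{1+|v|^2}$, the mass factors from the energy conservation law would not cancel and the reduction would instead produce mass-dependent constants on the right-hand side, complicating the bookkeeping without adding content.
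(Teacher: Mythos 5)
Your proposal is correct and follows essentially the same route the paper sketches just before the statement: bound $\japvi^k$ by $\japvip^k\japvjps^k$ via energy conservation and the renormalization of the bracket, absorb the weights into $\tilde{\F},\tilde{\G}$, reduce to $\|\QFGptil\|_{\ldvanula}$, and invoke Proposition~\ref{proposition Q+ in L20}. The only difference is that you spell out the short verification of the weight-factorization inequality, which the paper leaves implicit.
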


\section{Estimate of the gain operator in $L^p$ framework, $p \in (1,\infty)$} \label{Section Lp}

 The goal on the next section is to elaborate on the weak formulation of the gain operator formulated in \eqref{Pij and Qij+} in order to obtain a Young's inequality for each gain term of the collision operator associated to a pair of interacting species, for that we need to extend the results of \cite{IG-Alonso-Carneiro} for the multispecies framework.

\subsection{Estimate of the gain operator in a plain $L^p$ space}
We first study the integrability properties of the collision weight angular integral operator $\mathscr{P}_{ij}$, since they are closely related to those of $Q^+_{ij}$, by \eqref{Pij and Qij+}.


\begin{theorem}[$L^r$ control of the  collision weight angular integral operator]\label{trm Lr norm of P}
	Let $1 \leq p,q,r \leq \infty$ with $1/p+1/q=1/r$. Then if $r\neq \infty$, for the operator $\opP$ \eqref{Pij} with $b_{ij} \in L^1(\mathbb{S}^{N-1})$, the following estimate holds,
	\begin{equation}\label{estimate on Pij}
	\| \opP  \|_{\lrcomp} \leq \CopPr \,  \| \varphi  \|_{\lpcomp}  \  \| \chi  \|_{\lqcomp},
	\end{equation}
for any $\varphi \in L^p(\mathbb{R}^N)$ and $\chi \in L^q(\mathbb{R}^N)$, where the constant is
\begin{multline}\label{Cij from Lr norm of P}
\CopPr
=|\mathbb{S}^{N-2}|\int_{-1}^1 \left((1-r_{ij})^2(2-2s)\right)^{-\frac{N}{2p}} \\ \times \left(r_{ij}(1-r_{ij}) \left( \frac{1}{r_{ij}(1-r_{ij})} -2 +2s \right)\right)^{-\frac{N}{2q}} \mdx,
\end{multline}
with the measure $\xi_N^{b_{ij}}$ defined on $[-1,1]$ as 
\begin{equation}\label{measure}
\mdx=b_{ij}(s)(1-s^2)^{\frac{N-3}{2}}\, \md s.
\end{equation}
For $r=\infty$	the estimate \eqref{estimate on Pij} still holds but the constant simplifies to 
\begin{equation}\label{Cij from L inf norm of P}
\CopPinf:=\CopPinftri=\bbjed.
\end{equation}
\end{theorem}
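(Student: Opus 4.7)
The strategy combines Minkowski's integral inequality in the sphere variable, a double H\"older inequality (on $\md\omega$ and in velocity $u$), and a rotational symmetry identity that restores the $L^p\times L^q\to L^r$ structure after the angular integration.

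For fixed $u$, I parametrize $\sigma = s\hat u + \sqrt{1-s^2}\,\omega$ with $s=\hat u\cdot\sigma\in[-1,1]$ and $\omega\in\mathbb{S}^{N-2}_{\hat u^\perp}$, so that $\md\sigma = (1-s^2)^{(N-3)/2}\md s\,\md\omega$ and
\[
\opP(u) \;=\; \int_{-1}^{1} \mdx \int_{\mathbb{S}^{N-2}_{\hat u^\perp}} \varphi(u^-_{ij})\,\chi(u^+_{ij})\,\md\omega .
\]
A direct computation yields $|u^-_{ij}|=a(s)|u|$ and $|u^+_{ij}|=b(s)|u|$, where $a(s)^2=(1-r_{ij})^2(2-2s)$ and $b(s)^2=r_{ij}(1-r_{ij})\left(\frac{1}{r_{ij}(1-r_{ij})}-2+2s\right)$, precisely the factors appearing in \eqref{Cij from Lr norm of P}. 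For $r=\infty$ (hence $p=q=\infty$), pulling the $L^\infty$ norms outside the sphere integral and using $\int_{\mathbb{S}^{N-1}}b_{ij}(\hat u\cdot\sigma)\,\md\sigma=\bbjed$ gives the estimate at once.

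For $r<\infty$, Minkowski's integral inequality in $s$ reduces matters to controlling, for each $s$, the norm $\|F_s\|_r$ with $F_s(u)=\int_{\mathbb{S}^{N-2}_{\hat u^\perp}}\varphi(u^-_{ij})\chi(u^+_{ij})\,\md\omega$. A three-term H\"older inequality on $\md\omega$ with exponents $p,q,r'$ (using $\tfrac1p+\tfrac1q+\tfrac1{r'}=1$) yields
\[
F_s(u)\;\le\; |\mathbb{S}^{N-2}|^{1/r'}\left(\int|\varphi(u^-_{ij})|^p\md\omega\right)^{1/p}\left(\int|\chi(u^+_{ij})|^q\md\omega\right)^{1/q}.
\]
Raising to the $r$th power, integrating in $u$, and applying H\"older with conjugate exponents $p/r,\,q/r$ (valid because $\tfrac1p+\tfrac1q=\tfrac1r$ forces $p,q\ge r$) gives
\[
\|F_s\|_r^{r}\;\le\; |\mathbb{S}^{N-2}|^{r/r'}\left(\iint|\varphi(u^-_{ij})|^p\md\omega\,\md u\right)^{r/p}\left(\iint|\chi(u^+_{ij})|^q\md\omega\,\md u\right)^{r/q}.
\]

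The heart of the argument is the identity
\[
\iint_{\mathbb{R}^N\times\mathbb{S}^{N-2}_{\hat u^\perp}}|\varphi(u^-_{ij})|^p\,\md\omega\,\md u \;=\; \frac{|\mathbb{S}^{N-2}|}{a(s)^N}\,\|\varphi\|_p^p,
\]
and its analog for $u^+_{ij}$ with $b(s)^{-N}$. To prove it, write $u=\rho\hat u$; then $u^-_{ij}=\rho\,a(s)\,\hat v^-$ where $\hat v^-(\hat u,\omega)=\sqrt{(1-s)/2}\,\hat u-\sqrt{(1+s)/2}\,\omega\in\mathbb{S}^{N-1}$. The map $(\hat u,\omega)\mapsto\hat v^-$ is $SO(N)$-equivariant, so the pushforward of $\md\hat u\,\md\omega$ onto $\mathbb{S}^{N-1}$ is an $SO(N)$-invariant measure, hence a scalar multiple of the uniform measure; matching the total mass $|\mathbb{S}^{N-1}|\cdot|\mathbb{S}^{N-2}|$ pins down the scalar as $|\mathbb{S}^{N-2}|$. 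Rescaling $t=\rho\,a(s)$ then contributes the Jacobian factor $a(s)^{-N}$, and the same argument for $\hat v^+$ yields the $b(s)^{-N}$ factor.

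Assembling the pieces and using $\tfrac{r}{r'}+\tfrac{r}{p}+\tfrac{r}{q}=r$ gives $\|F_s\|_r\le|\mathbb{S}^{N-2}|\,a(s)^{-N/p}\,b(s)^{-N/q}\,\|\varphi\|_p\|\chi\|_q$, and Minkowski's inequality in $s$ delivers exactly the constant \eqref{Cij from Lr norm of P}. The main subtlety is the pushforward identity; I avoid an explicit Jacobian computation for the non-injective map $(\hat u,\omega)\mapsto\hat v^\pm$ by invoking uniqueness up to scale of the $SO(N)$-invariant measure on $\mathbb{S}^{N-1}$.
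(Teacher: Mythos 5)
Your proof is correct but follows a genuinely different route from the paper. The paper's proof (following \cite{IG-Alonso-Carneiro}) proceeds via radial symmetrization: it introduces the $L^p$-radializations $\varphi_p^*,\chi_q^*$, invokes a cited lemma to reduce matters to radial inputs, observes that $\mathscr{P}_{ij}$ then collapses to a one-dimensional operator $\mathscr{B}_{ij}$ on $\mathbb{R}^+$ with weight $x^{N-1}\,\mathrm{d}x$, and applies Minkowski plus H\"older plus an elementary change of variables $y=a^k_{ij}(x,s)$ there, before reassembling via duality. You instead skip the radialization machinery entirely: you decompose $\md\sigma$ as $(1-s^2)^{(N-3)/2}\md s\,\md\omega$, apply Minkowski in $s$, a three-term H\"older in $\omega$, then a two-term H\"older in $u$, and reduce the remaining iterated integrals $\iint|\varphi(u^-_{ij})|^p\,\md\omega\,\md u$ to $|\mathbb{S}^{N-2}|a(s)^{-N}\|\varphi\|_p^p$ via an $SO(N)$-equivariance/pushforward identity for the map $(\hat u,\omega)\mapsto\hat v^\pm$. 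Both approaches exploit the same underlying rotational invariance, but yours encapsulates it in the pushforward identity while the paper encapsulates it in the radialization lemma; yours has the advantage of being self-contained (no appeal to Lemma~3 of \cite{IG-Alonso-Carneiro}) and of avoiding the auxiliary one-dimensional operator $\mathscr{B}_{ij}$, while the paper's route avoids any argument about non-injective maps between spheres. Both recover the same constant $\CopPr$, and your computations of $|u^\pm_{ij}|=a(s)|u|,\,b(s)|u|$ and of the pushforward scalar $|\mathbb{S}^{N-2}|$ (by matching total masses) are correct. One point worth spelling out when writing this up: at the endpoints $p=\infty$ or $q=\infty$ the pushforward identity should be replaced by the trivial bound $\sup_{\omega}|\varphi(u^-_{ij})|\le\|\varphi\|_\infty$ in the corresponding factor, since the iterated integral of $|\varphi|^p$ does not literally make sense at $p=\infty$; the final constant still specializes correctly because $a(s)^{-N/p}\to 1$ as $p\to\infty$.
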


\begin{proof} The proof consists in multiple steps.
\\
\textbf{Step 1. (Radial symmetrization)} Following the strategy in \cite{IG-Alonso-Carneiro}, we will consider $G=SO(N)$ the group of rotations of $\mathbb{R}^N$ (orthonormal transformations of determinant 1) and we will denote $R$ a generic rotation. Moreover, we assume the Haar measure $d\mu$ of this compact topological group normalized, i.e., $\int_G d\mu(R)=1$.\\ For $p \geq 1$, let $f \in L^p(\mathbb{R}^N)$ and we will define the {radial symmetrization} $f_p^*$ of $f$ by
$$f_p^*(x) = \left( \int_G |f(Rx)|^p \md \mu(R) \right)^{\frac{1}{p}} \ , \ \text{if} \ 1 \leq p < \infty$$
and
$$f^*_{\infty} = \esssup_{|y|=|x|}\ |f(y)|$$

This radial rearrangement can be seen as an $L^p$-average of $f$ over all the rotations $R \in G$ and its properties can be found in \cite{IG-Alonso-Carneiro}.

It is not hard to prove that $$\|f\|_{\lpcomp}=\|f_p^*\|_{\lpcomp}.$$
\textbf{Step 2. (Passage from $ \mathscr{P}_{ij}$ to an one-dimensional operator $\mathscr{B}_{ij}$)} 
First we use the following lemma, whose proof only uses the structure of $\mathscr{P}_{ij}$ but not the definition of $u^-_{ij}$ and $u^+_{ij}$.
\begin{lemma}[Lemma 3 in  \cite{IG-Alonso-Carneiro}]\label{lemma Pij}
	\vspace{0.2cm}
	Let $\varphi, \chi,  \psi \in C_c(\mathbb{R}^N)$ and $1/p+1/q+1/r'=1$, with $1 \leq p, q, r' \leq \infty$. Then
	$$\left| \int_{\mathbb{R}^N} \opP(u) \psi(u) du \right| \leq \int_{\mathbb{R}^N} \opPs(u) \psi_{r'}^*(u) du.$$ 
\end{lemma}
From this Lemma, $L^p$-estimates for the operator $\mathscr{P}_{ij}$ will follow by considering radial functions. As in \cite{IG-Alonso-Carneiro} for any radial function $f:\mathbb{R}^N \rightarrow \mathbb{R}$, we can define $\tilde{f}:= \mathbb{R}^+ \rightarrow \mathbb{R}$ by
$$f(x)=\tilde{f}(|x|).$$
In addition, for any $1 \leq p < \infty$,
\begin{equation} \label{f and tilde}
\int_{\mathbb{R}^N} f(x)^p \md x = |\mathbb{S}^{N-1}| \int_0^\infty \tilde{f}(t)^p t^{N-1}\md t,
\end{equation}
and for $p=\infty$,
\begin{equation}\label{Linf R+} 
\left\| f \right\|_{\infty}= \left\| \tilde{f} \right\|_{\linfp} := \esssup_{y \in \mathbb{R}^+} \left| \tilde{f}(y) \right|.
\end{equation}
Let us show now how the operator $\mathscr{P}_{ij}$ simplifies to a 1-dimensional operator when applied to radial functions. Indeed, if $\varphi$ and $\chi$ are radial
\begin{align*}
\opP(u)&=\int_{\mathbb{S}^{N-1}} \tilde{\varphi}(|u^-_{ij}|) \, \tilde{\chi}(|u^+_{ij}|) \, b_{ij}(\hat{u}\cdot \sigma) \, \md \sigma\\
&=\int_{\mathbb{S}^{N-1}} \tilde{\varphi}(a^1_{ij}(|u|, \hat{u}\cdot \sigma) \, \tilde{\chi}(a^2_{ij}(|u|, \hat{u}\cdot \sigma)) \, b_{ij}(\hat{u}\cdot \sigma) \, \md \sigma\\
&=|\mathbb{S}^{N-2}| \int_{-1}^1 \tilde{\varphi}(a^1_{ij}(|u|, s)) \, \tilde{\chi}(a^2_{ij}(|u|, s)) \, b_{ij}(s)(1-s^2)^{\frac{N-3}{2}} \, \md s,
\end{align*}
where $a^1_{ij},a^2_{ij}:\mathbb{R}^+ \times [-1,1] \rightarrow \mathbb{R}^+$ are defined by 
\begin{equation}\label{a1 and a2}
a^1_{ij}(x,s)=(1-r_{ij})x\left(2-2s\right)^{\frac{1}{2}} \  , \   a^2_{ij}(x,s)=(r_{ij}(1-r_{ij}))^{\frac{1}{2}}x\left(\!\frac{1}{r_{ij}(1-r_{ij})}-2+2s\!\!\right)^{\!\!\frac{1}{2}}.
\end{equation}
Considering the measure $\xi_N^{b_{ij}}$ on $[-1,1]$ as $\mdx=b_{ij}(s)(1-s^2)^{\frac{N-3}{2}}\, \md s$, as given in \eqref{measure}, we conclude that
\begin{equation}\label{Pij tilde}
\widetilde{\opP}(x)= |\mathbb{S}^{N-2}|  \int_{-1}^1 \tilde{\varphi}(a_1(x, s))  \, \tilde{\chi}(a_2(x, s)) \, \mdx.
\end{equation}

Therefore, we are led to introduce the following bilinear operator for any two bounded and continuous functions $\varphi, \chi:\mathbb{R}^+ \rightarrow \mathbb{R}$ as
\begin{equation}\label{operator B}
\opB(x):=\int_{-1}^1 \varphi(a^1_{ij}(x,s)) \, \chi(a^2_{ij}(x,s)) \mdx,
\end{equation}
with mappings $a^1_{ij}$ and $a^2_{ij}$ defined in \eqref{a1 and a2}.
\\
\textbf{Step 3. (Study of the operator $\mathscr{B}_{ij}$)}
\begin{lemma} \label{lemma Bij}
	Let $1 \leq p,q,r \leq \infty$ with $1/p+1/q=1/r$, $\varphi \in L^p(\mathbb{R}^+, x^{N-1} \mathrm{d} x)$ and $\chi \in L^q(\mathbb{R}^+, x^{N-1} \mathrm{d} x)$. For $r\neq \infty$ we have
	\begin{equation}\label{estimate on B r}
	\| \opB \ | \cdot |^{\frac{N-1}{r}}\|_{\lrp} \leq \CopBr \, \| \varphi \ | \cdot |^{\frac{N-1}{p}}\|_{\lpp}  \| \chi \ | \cdot |^{\frac{N-1}{q}}\|_{\lqp},
	\end{equation}
	where the constant is 
	\begin{multline*}
	\CopBr 
	=\int_{-1}^1 \left((1-r_{ij})(2-2s)^{1/2}\right)^{-\frac{N}{p}} 
	\\ \times
	\left(r_{ij}(1-r_{ij})\left( \frac{1}{r_{ij}(1-r_{ij})} -2 +2s \right)\right)^{-\frac{N}{2q}} \mdx.
	\end{multline*}
	For $r=\infty$, we have
	\begin{equation}\label{estimate on B inf}
	\left\| \opB  \right\|_{\linfp} \leq \CopBinf  \, \left\| \varphi \right\|_{\linfp} \left\| \chi \right\|_{\linfp},
	\end{equation}
	with the constant that simplifies to 
		\begin{equation*}
		\CopBinf:= \CopBinftri 
	=\frac{1}{\left|\mathbb{S}^{N-2}\right|} \bbjed.
	\end{equation*}
\end{lemma}
\begin{proof}
Using Minkowski's and H\"older's inequalities with exponents $p/r$ and $q/r$ we obtain
\begin{multline*}
\left(\int_{0}^\infty \left| \int_{-1}^1  \varphi(a^1_{ij}(x,s)) \, \chi(a^2_{ij}(x,s)) \mdx  \right|^r x^{N-1} \mathrm{d} x \right)^{1/r}
\\ \leq \int_{-1}^1 \left( \int_0^\infty |\varphi(a^1_{ij}(x,s))|^r |\chi(a^2_{ij}(x,s))|^r x^{N-1} \mathrm{d} x  \right)^{\frac{1}{r}} \mdx
\\ \leq \int_{-1}^1 \left( \int_0^\infty |\varphi(a^1_{ij}(x,s))|^p x^{N-1} \mathrm{d} x  \right)^{\frac{1}{p}} \left( \int_0^\infty |\chi(a^2_{ij}(x,s))|^q x^{N-1} \mathrm{d} x  \right)^{\frac{1}{q}} \mdx.
\end{multline*}
Then for $\varphi$ we can consider the change of variables $y=a^1_{ij}(x,s)$, for any fixed $s \in [-1,1]$,
\begin{align*}
\left( \int_0^\infty |\varphi(a^1_{ij}(x,s))|^p x^{N-1} \mathrm{d} x  \right)^{\frac{1}{p}} = \left( (1-r_{ij})(2-2s)^{1/2}\right)^{-\frac{N}{p}} \left( \int_0^\infty |\varphi(y)|^p y^{N-1} \mathrm{d} y  \right)^{\frac{1}{p}}.
\end{align*}
And we can repeat the same for $\chi$,
\begin{multline*}
\left( \int_0^\infty |\chi(a^2_{ij}(x,s))|^q x^{N-1} \mathrm{d} x\right)^{\frac{1}{q}} \\= \left( r_{ij}(1-r_{ij})\left( \frac{1}{r_{ij}(1-r_{ij})} -2+2s \right)\right)^{-\frac{N}{2q}}\left( \int_0^\infty |\chi(y)|^q y^{N-1} \mathrm{d} y  \right)^{\frac{1}{q}},
\end{multline*}
which yields \eqref{estimate on B r}. 

For $r=\infty$, from the definition of the operator $\mathscr{B}_{ij}$ \eqref{operator B}  and pulling out the $L^\infty$ norms of $\varphi$ and $\chi$, we obtain
\begin{equation*}
\opB(x) \leq \frac{1}{\left|\mathbb{S}^{N-2}\right|} \left\| \varphi \right\|_{\linfp} \left\| \chi \right\|_{\linfp} \bbjed,
\end{equation*}
and taking the supremum we obtain \eqref{estimate on B inf}.
\end{proof}
\noindent \textbf{Step 4. (Conclusion of the proof for Theorem \ref{trm Lr norm of P})} 
From the Lemma \ref{lemma Pij}, for $1/p+1/q=1/r$ and $r\neq \infty$,  by duality we have
\begin{equation*}
\left( \int_{\mathbb{R}^N} \left|\opP\right|^r  \md u\right)^{1/r} \leq \left( \int_{\mathbb{R}^N} |\opPs|^r  \md u\right)^{1/r}
\end{equation*}
Now, using equations \eqref{f and tilde}, \eqref{Pij tilde} and Lemma \ref{lemma Bij} 
\begin{multline*}
 \left( \int_{\mathbb{R}^N} |\opPs^r \md u\right)^{1/r} 
 = \left| \mathbb{S}^{N-1} \right|^{1/r} \left( \int_{0}^\infty |\widetilde{\opPs}(x)|^r x^{N-1} \md x\right)^{1/r} \\ =
 \left| \mathbb{S}^{N-1} \right|^{1/r}  \left| \mathbb{S}^{N-2} \right| \left( \int_{0}^\infty |\opBs(x)|^r x^{N-1} \md x\right)^{1/r} \\
 \leq \CopBr \left| \mathbb{S}^{N-1} \right|^{1/r}  \left| \mathbb{S}^{N-2} \right|  \left( \int_0^\infty |\widetilde{\varphi^*_p}(x)|^p x^{N-1} \mathrm{d} x  \right)^{\frac{1}{p}}  \left( \int_0^\infty |\widetilde{\chi^*_q}(x)|^q x^{N-1} \mathrm{d} x  \right)^{\frac{1}{q}}\\
 \leq \CopBr  \left| \mathbb{S}^{N-2} \right|  \left( \int_{\mathbb{R}^N} |\varphi^*_p(u)|^p \mathrm{d} u  \right)^{\frac{1}{p}}  \left( \int_{\mathbb{R}^N} |\chi^*_q(u)|^q  \mathrm{d} u  \right)^{\frac{1}{q}}\\
 = \CopBr  \left| \mathbb{S}^{N-2} \right|  \left( \int_{\mathbb{R}^N} |\varphi(u)|^p  \mathrm{d} u  \right)^{\frac{1}{p}}  \left( \int_{\mathbb{R}^N} |\chi(u)|^q  \mathrm{d} u  \right)^{\frac{1}{q}},
\end{multline*}
which yields \eqref{estimate on Pij}. 
If $r=\infty$, by duality and Lemma \ref{lemma Pij}, we have
\begin{multline*}
\left\| \opP \right\|_{\infty} \leq \left\| \opPsinf \right\|_{\infty} =  \left\| \widetilde{\opPsinf}   \right\|_{\infty}  \\ \leq \left\| \widetilde{\varphi^*_\infty} \right\|_{\linfp}\left\| \widetilde{\chi^*_\infty} \right\|_{\linfp} \bbjed  = \bbjed \left\| \varphi \right\|_{\infty}\left\| \chi \right\|_{\infty},
\end{multline*} 
with the norm on radially symmetrized functions as defined in \eqref{Linf R+}. Now the proof of the Theorem  \ref{trm Lr norm of P} is completed.
\end{proof}

\begin{theorem}\label{trm Lr0 norm of Q+}
	Let $p, q, r \in [1,\infty]$ with $\frac{1}{p}+\frac{1}{q}=1+\frac{1}{r}$.  Assume that $\mathcal{B}_{ij}$ takes the form
\begin{equation*}
\mathcal{B}_{ij}(x,y) =x^\lij \, b_{ij}(y), \quad \lij \geq0, \quad \text{and} \quad b_{ij} \in L^1([-1,1]; \mdx),
\end{equation*}
for any $i, j = 1,\dots, I$,
with the measure from \eqref{measure}. Then  we have the following estimate for $\F \in \lplam(\mathbb{R}^N)$ and $\G \in \lqlam(\mathbb{R}^N)$
\begin{equation}\label{Q+ r norm}
\left\| \QFGp\right\|_{\lrnula} \leq \CopQr \left\|\F\right\|_{\lplam}  \left\|\G\right\|_{\lqlam},
\end{equation}
with
\begin{equation*}
\lambda = \max_{1 \leq i,j \leq I} \lij,
\end{equation*}
and 	where the constant $\CopQr$ for $r\neq 1$ and $r\neq\infty$ is given by
\begin{equation}\label{cqplus pqr}
\CopQr=I^{\frac{r-1}{r}} \max_{1 \leq i,j \leq I} \left( 2^{\lij} \Mij \CijQp \right),
\end{equation}
and
\begin{align}
\Mij &:=  \left(\frac{\sum_{k=1}^I m_k }{m_i}\right)^\frac{\lij}{2} +\left(\frac{\sum_{k=1}^I m_k}{m_j}\right)^\frac{\lij}{2},\label{Mij}\\
\CijQp&:= \left(\Cijtwo\right)^{r'/q'}  \left(\Cijthree\right)^{r'/p' } \\ &= |\mathbb{S}^{N-2}|  \left(\int_{-1}^1 \left((1-r_{ij})^2(2-2s)\right)^{-\frac{N}{2r'}} \mdx\right)^{r'/q'} \\ & \qquad \qquad \times \left(\int_{-1}^1 \left(r_{ij}(1-r_{ij})\left( {\frac{1}{r_{ij}(1-r_{ij})}} -2 +2s \right)\right)^{-\frac{N}{2r'}} \mdx \right)^{r'/p'}. \label{Cij Q+}
\end{align}
When $r=1$ and  $r=\infty$ the constants change to
\begin{equation}\label{C1 and Cinfty for Q+}
\begin{split}
\CopQone &:= \CopQonetri= \max_{1 \leq i,j \leq I} \left( 2^{\lij} \Mij \bbjed \right),\\ 
\CopQinf  &= \max_{1 \leq i,j \leq I} \left(2^{\lij} \Mij \CijQppppinf \right).
\end{split}
\end{equation}
\end{theorem}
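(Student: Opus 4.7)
The strategy follows the duality approach of Alonso--Carneiro \cite{IG-Alonso-Carneiro}, adapted to the species-dependent setting. I would first establish the pairwise inequality
\[
\|Q^+_{ij}(f_i,g_j)\|_{\lrnula} \le 2^{\lij}\Mij\,\CijQp\,\|f_i\|_{\lplam}\,\|g_j\|_{\lqlam},
\]
and then aggregate over $i,j$ by a discrete H\"older inequality to recover the vector-valued statement \eqref{Q+ r norm}.

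For the pairwise bound, I would start from the two weak representations in \eqref{Pij and Qij+}, which rewrite $\int Q^+_{ij}(f,g)\psi\,\md v$ as an integral over $(v,u)$ involving either $\mathscr{P}_{ij}(\tau_{-v}\mathcal{R}\psi,1)(u)$ or $\mathscr{P}_{ij}(1,\tau_{-v}\mathcal{R}\psi)(u)$. The kinetic weight is transferred onto the inputs through the elementary bound
\[
|u|^{\lij}\le 2^{\lij}\bigl(|v|^{\lij}+|v_*|^{\lij}\bigr)\le 2^{\lij}\Mij\,\japvi^{\lij}\japvjs^{\lij},
\]
which uses $|v|^2\le(\sum_k m_k/m_i)\japvi^2$ together with $\japvi,\japvjs\ge 1$. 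This reduces the pairwise estimate to a trilinear form in $f$, $g$, $\psi$ with unweighted $\mathscr{P}_{ij}$.

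Next, I would interpolate between the two representations geometrically with weights $r'/q'$ on the first and $r'/p'$ on the second; these are valid convex weights since
\[
\frac{r'}{p'}+\frac{r'}{q'}=r'\Bigl(2-\frac{1}{p}-\frac{1}{q}\Bigr)=r'\cdot\frac{1}{r'}=1,
\]
using the hypothesis $1/p+1/q=1+1/r$. Applying Theorem \ref{trm Lr norm of P} to each factor with $L^\infty$-type exponents, namely $\|\mathscr{P}_{ij}(\varphi,1)\|_{r'}\le\Cijtwo\|\varphi\|_{r'}$ and $\|\mathscr{P}_{ij}(1,\chi)\|_{r'}\le\Cijthree\|\chi\|_{r'}$, produces the composite constant $\CijQp=\bigl(\Cijtwo\bigr)^{r'/q'}\bigl(\Cijthree\bigr)^{r'/p'}$. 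A H\"older inequality in the outer variables $(v,v_*)$ with exponents $(p,q,r')$, which satisfy $1/p+1/q+1/r'=1$, then delivers the pairwise bound.

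Finally, to pass from the pairwise bound to \eqref{Q+ r norm}, I would dualize against a vector test function $\Psi=[\psi_i]$ with $\|\Psi\|_{L^{r'}_0}=1$, write $\sum_i\int[\QFGp]_i\psi_i\,\md v=\sum_{i,j}\int Q^+_{ij}(f_i,g_j)\psi_i\,\md v$, insert the pairwise bound, and apply a discrete H\"older on the indices $(i,j)$ with exponents $(p,q,r')$ to contract the sums into $\|\F\|_{\lplam}\|\G\|_{\lqlam}$; this step produces the prefactor $I^{(r-1)/r}$ and the maximum over $i,j$ of $2^{\lij}\Mij\CijQp$ appearing in \eqref{cqplus pqr}. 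The extreme cases are immediate from Theorem \ref{trm Lr norm of P}: for $r=1$ no interpolation is needed and the $\mathscr{P}_{ij}$ factor is controlled directly by $\bbjed$, while for $r=\infty$ the $\mathscr{P}_{ij}$ factor collapses to $\bbjed$ via \eqref{Cij from L inf norm of P}, yielding the constants in \eqref{C1 and Cinfty for Q+}. I anticipate that the main obstacle will be the bookkeeping of the interpolation exponents to recover exactly $\bigl(\Cijtwo\bigr)^{r'/q'}\bigl(\Cijthree\bigr)^{r'/p'}$, together with the discrete H\"older argument needed to obtain the sharp $I^{(r-1)/r}$ species-aggregation factor.
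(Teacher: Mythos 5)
Your proposal matches the paper's proof step for step: weak form via $\mathscr{P}_{ij}$, transfer of the $|u|^{\lambda_{ij}}$ weight onto the inputs, a three-way H\"older exploiting $1/p'+1/q'+1/r=1$, the $L^{r'}$ estimates on $\mathscr{P}_{ij}$ from Theorem~\ref{trm Lr norm of P}, duality, and aggregation over species. The minor presentational differences are that the paper distributes $2^{\lambda_{ij}}(|v-u|^{\lambda_{ij}}+|v|^{\lambda_{ij}})$ into two separate integrals rather than using your one-shot pointwise bound $M_{ij}\japvi^{\lambda_{ij}}\japvjs^{\lambda_{ij}}$ (both yield the same $\Mij$), and what you call ``geometric interpolation'' is precisely the paper's H\"older split of the single weak-form integrand into factors carrying $\mathscr{P}^{r'/q'}$ and $\mathscr{P}^{r'/p'}$; otherwise the arguments coincide.
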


\begin{proof}  
This proof is inspired of the one written in \cite{IG-Alonso-Carneiro}, Theorem 1. We rewrite it here in complete detail as several changes are needed for the adaptation to gas mixtures models.\\
If $\lij=0$  we can  consider the gain operator in a weak form given in \eqref{Pij and Qij+},
\begin{equation}\label{J def}
J_0:= \int_{\mathbb{R}^N} Q^+_{ij}(f_i,g_j)(v)\psi (v) \md v = \int_{\mathbb{R}^N} \int_{\mathbb{R}^N} f_i(v) g_j(v-u) \mathscr{P}_{ij}(\tau_{-v} (\mathcal{R}\psi), 1)(u) \md u \md v.
\end{equation}
When $\lij >0 $ we use the following additional inequality, 
$$|u|^\lij \leq \left(|v-u|+|v| \right)^\lij
\leq 2^\lij \left( |v-u|^\lij + |v|^\lij \right),$$ 
so that the weak form of the Gain operator for the pair $\{ij\}$ is estimated by
\begin{multline}\label{J def with weight}
J_\lij:=\int_{\mathbb{R}^N} Q^+_{ij}(f_i,g_j)  (v) \psi(v) dv \\
 = \int_{\mathbb{R}^N}\int_{\mathbb{R}^N} f_i(v)g_j(v-u) \mathscr{P}_{ij}(\tau_{-v} (\mathcal{R} \psi),1)(u) |u|^\lij \md u \md v\\
\leq \int_{\mathbb{R}^N}\int_{\mathbb{R}^N} f_i(v)g_j(v-u) \mathscr{P}_{ij}(\tau_{-v} (\mathcal{R} \psi),1)(u) ( 2^\lij(|v-u|^\lij + |v|^\lij)) \md u \md v\\
\leq 2^\lij \left( \int_{\mathbb{R}^N}\int_{\mathbb{R}^N} f_i(v)g_j(v-u)\left|v-u\right|^\lij \mathscr{P}_{ij}(\tau_{-v} (\mathcal{R} \psi),1)(u) \md u \md v \right. \\ \left. +  \int_{\mathbb{R}^N}\int_{\mathbb{R}^N} f_i(v)\left|v\right|^\lij g_j(v-u) \mathscr{P}_{ij}(\tau_{-v} (\mathcal{R} \psi),1)(u)   \md u \md v\right).
\end{multline}

The proof is separated  into three subcases, depending whether $i)$ $(p,q,r) \notin \{(1,1,1),  (p, p', \infty)\}$, $ii)$ $(p,q,r) = (1,1,1)$ or $iii)$ $(p,q,r) = (p,p',\infty)$. Each subcase contains three steps. The first step aims  at estimating $J_0$ from \eqref{J def}. Then, in the second step, by  applying the very same estimate on $J_0$, but now on functions $f_i$ and $g_j\langle\cdot\rangle_j$ (respectively on  $f_i\langle\cdot\rangle_i$ and $g_j$) we estimate $J_\lij$ given in \eqref{J def with weight}. Finally, in step three, by duality arguments, we obtain an estimate for $\|Q^+_{ij} (f_i, g_j)\|_{\lrcomp}$ that will  yield the one for $	\left\| \QFGp\right\|_{\lrnula}$.  To conclude, we make  use of the  following inequalities in order to obtain the appropriate norms on the right hand side of \eqref{Q+ r norm},
\begin{equation}\label{estimate with lij}
\|f\|_{p} \leq \|f\|_{L^p_{\lij,i}} \ \ \text{and} \ \ \|f \left|\cdot\right|^\lij \|_{p} \leq \sqrt{ \frac{\sum_{k=1}^I m_k}{m_i}} \ \|f\|_{L^p_{\lij,i}},
\end{equation}
the first one following from the monotonicity of norms, and the second one from \eqref{abs value in terms of jap brac}.
\\
\textbf{Subcase 1. ($(p,q,r) \notin \{(1,1,1),  (p, p', \infty)\}$)}.\\
\textbf{Step 1. (Estimate of $J_0$)}.
Since the exponents $p,q,r$ in the Theorem satisfy $1/p'+1/q'+1/r=1$ we can regroup the terms conveniently.
\begin{multline*}
J_0=\int_{\mathbb{R}^N} \int_{\mathbb{R}^N} \left(f_i(v)^{\frac{p}{r}} g_j(v-u)^{\frac{q}{r}}\right)  \left( f_i(v)^{\frac{p}{q'}} \mathscr{P}_{ij}(\tau_{-v} (\mathcal{R} \psi), 1)(u)^{\frac{r'}{q'}}\right)\\ \times
\left( g_j(v-u)^{\frac{q}{p'}} \mathscr{P}_{ij}(\tau_{-v} (\mathcal{R} \psi), 1)(u)^{\frac{r'}{p'}}\right)\md u \, \md v.
\end{multline*}
Then using H\"older's inequality
$$J_0\leq I_1 I_2 I_3.$$
where
\begin{align*}
I_1&:=\left(\int_{\mathbb{R}^N} \int_{\mathbb{R}^N}f_i(v)^p g_j(v-u)^q  \md u\, \md v\right)^{\frac{1}{r}}= \|f_i\|^{p/r}_{\lpcomp} \|g_j\|^{q/r}_{\lqcomp},\\
I_2&:=\left(\int_{\mathbb{R}^N} \int_{\mathbb{R}^N}f_i(v)^p \mathscr{P}_{ij}(\tau_{-v} (\mathcal{R} \psi), 1)(u)^{r'}   \md u \, \md v\right)^{\frac{1}{q'}} { \leq  \left(\Cijtwo  \right)^{r'/q'}} \|f_i\|^{p/q'}_{\lpcomp} \|\psi\|_{\lrpcomp}^{r'/q'},\\
I_3&:=\left(\int_{\mathbb{R}^N} \int_{\mathbb{R}^N}g_j(v-u)^q \mathscr{P}_{ij}(\tau_{-v} (\mathcal{R} \psi), 1)(u)^{r'}   \md u \, \md v\right)^{\frac{1}{p'}}\\
&=\left(\int_{\mathbb{R}^N} \int_{\mathbb{R}^N}g_j(v)^q \mathscr{P}_{ij}(1, \tau_{-v} (\mathcal{R} \psi))(u)^{r'}   \md u \, \md v\right)^{\frac{1}{p'}}  { \leq \left(\Cijthree\right)^{r'/p'}} \|g_j\|^{q/p'}_{\lqcomp} \|\psi\|_{\lrpcomp}^{r'/p'},
\end{align*}
Then, we can conclude that
\begin{equation}\label{Jzero}
J_0 \leq \CijQp  \|f_i\|_{\lpcomp}  \|g_j\|_{\lqcomp}  \|\psi\|_{\lrpcomp},
\end{equation} 
where $\CijQp$ is \eqref{Cij Q+}.\\
\textbf{Step 2. (Estimate of $J_\lij$)}. Using the estimate \eqref{Jzero} for the functions $f_i, \, g_j | \cdot|^{\lij}$ and for $f_i| \cdot|^{\lij}, \, g_j $ then \eqref{J def with weight} becomes
\begin{equation*}
J_{\lij} \leq 2^{\lij} \CijQp \left( \|f_i |\cdot|^{\lij}\|_p \, \|g_j\|_q  + \|f_i\|_p \, \|g_j | \cdot|^{\lij}\|_q \right)  \| \psi \|_{r'} .
\end{equation*}
Then, using \eqref{estimate with lij},  we obtain the final estimate for $J_\lij$,
\begin{equation*}
J_{\lij} \leq  2^{\lij} \CijQp \Mij \|f_i\|_{L^p_{\lij, i}} \|g_j\|_{L^q_{\lij, j}} \| \psi \|_{r'},
\end{equation*}
where $\Mij$ is from \eqref{Mij}.
\\
\textbf{Step 3. (Estimate of $\left\| \QFGp\right\|_{\lrnula}$)}.  Thanks to the last estimate on $J_\lij$, by duality we obtain 
\begin{equation}\label{Qij estimate}
\|Q^+_{ij} (f_i, g_j)\|_{\lrcomp} \leq 2^{\lij} \CijQp \Mij \|f_i\|_{L^p_{\lij, i}} \|g_j\|_{L^q_{\lij, j}} .
\end{equation}
Furthermore, we can estimate the norm of the vector form collision operator in terms of its components,
\begin{equation}\label{Q vector in terms of Q comp r norm}
\left\| \QFGp\right\|_{\lrnula} = \left( \sum_{i=1}^I  \int_{\mathbb{R}^N}  \left| \sum_{j=1}^I Q^+_{ij} (f_i, g_j) (v)\right|^r \md v \right)^{1/r} \!\!\!\!\!
\leq  I^\frac{r-1}{r} \sum_{i=1}^I \sum_{j=1}^I \|Q^+_{ij} (f_i, g_j)\|_{\lrcomp},
\end{equation}
which yields, using \eqref{Qij estimate},
\begin{equation*}
\left\| \QFGp\right\|_{\lrnula} 
\leq \CopQr \sum_{i=1}^I \sum_{j=1}^I  \|f_i\|_{L^p_{\lij, i}} \|g_j\|_{L^q_{\lij, j}} \leq \CopQr \sum_{i=1}^I \sum_{j=1}^I  \|f_i\|_{L^p_{\lambda, i}} \|g_j\|_{L^q_{\lambda, j}},
\end{equation*}
by monotonicity of norms since $\lambda=\max_{1 \leq i,j \leq I} \lij$. This concludes the estimate  \eqref{Q+ r norm} in this subcase.\\
\textbf{Subcase 2. ($(p,q,r) = (1,1,1)$)} \\
\textbf{Step 1. (Estimate of $J_0$)}. For this choice of indexes, $J_0$ can be estimated with 
\begin{multline*}
J_0= \int_{\mathbb{R}^N}\int_{\mathbb{R}^N} f_i(v) g_j(v-u) \mathscr{P}_{ij}(\tau_{-v} (\mathcal{R}\psi), 1)(u) \md u \md v\\
\leq \|\mathscr{P}_{ij}(\tau_{-v} (\mathcal{R}\psi), 1)\|_{\infty} \|f_{i}\|_{1} \|g_j\|_{1}  \leq \CopPinf  \|f_{i}\|_{1} \|g_j\|_{1} \|\psi\|_{\infty},
\end{multline*}
where the constant $\CopPinf$ is from \eqref{Cij from L inf norm of P}.\\
\textbf{Step 2. (Estimate of $J_\lij$)}. Using the same idea as in the previous subcase, we apply the   inequality above to  the functions $f_i, \, g_j | \cdot|^{\lij}$ and to $f_i| \cdot|^{\lij}, \, g_j$, and then using \eqref{estimate with lij} we get the estimate for $J_\lij$,
\begin{equation*}
J_{\lij} \leq  2^{\lij} \CopPinf   \Mij \|f_i\|_{L^1_{\lij, i}} \|g_j\|_{L^1_{\lij, j}} \| \psi \|_{\infty}. 
\end{equation*}
\textbf{Step 3. (Estimate of $\left\| \QFGp\right\|_{\lrnula}$)}. Writing the norm of the vector value collision operator in terms of norms of its components and exploiting duality arguments, we have
\begin{multline*}
\left\| \QFGp\right\|_{\lnula} = \sum_{i=1}^I  \int_{\mathbb{R}^N}  \left| \sum_{j=1}^I Q^+_{ij} (f_i, g_j) (v)\right| \md v 
 \\ \leq \CopQone \sum_{i=1}^I\sum_{j=1}^I \|f_i\|_{L^1_{\lij, i}} \|g_j\|_{L^1_{\lij, j}} \leq  \CopQone \sum_{i=1}^I\sum_{j=1}^I \|f_i\|_{L^1_{\lambda, i}} \|g_j\|_{L^1_{\lambda, j}},
\end{multline*}
where the constant $\CopQone$ from \eqref{C1 and Cinfty for Q+}, and 
the last inequality follows from the monotonicity of norms. 
\\
\textbf{Subcase 3. ($(p,q,r) = (p,p',\infty)$)}.\\
\textbf{Step 1. (Estimate of $J_0$)}.   If additionally 
 $(p,p')=(\infty,1)$, then $J_0$ from \eqref{J def} can be rewritten and estimated using the same ideas as in the Subcase 1,
\begin{multline*}
J_0= \int_{\mathbb{R}^N}\int_{\mathbb{R}^N} f_i(v) g_j(v-u) \mathscr{P}_{ij}(\tau_{-v} (\mathcal{R}\psi), 1)(u) \md u \md v\\
\leq \|f_{i}\|_{\infty} \left( \int_{\mathbb{R}^N}\int_{\mathbb{R}^N}  g_j(v) \mathscr{P}_{ij}(1,\tau_{v} (\mathcal{R}\psi))(u) \md u \md v \right)
\leq \Cijfive  \|f_{i}\|_{\linfcomp} \|g_j\|_{\lonecomp} \|\psi\|_{\lonecomp}.
\end{multline*}
Similarly, for $(p,p')=(1, \infty)$, we have
\begin{equation*}
J_0 \leq \Cijsix \|f_{i}\|_{\lonecomp} \|g_j\|_{\linfcomp} \|\psi\|_{\lonecomp}.
\end{equation*}
If $(p,p')\notin \{(1,\infty), (\infty,1)\}$, we rewrite $J$ as
\begin{multline*}
J_0=\int_{\mathbb{R}^N} \int_{\mathbb{R}^N}  \left( f_i(v) \mathscr{P}_{ij}(\tau_{-v} (\mathcal{R} \psi), 1)(u)^{\frac{1}{p}}\right)\\ \times
\left( g_j(v-u) \mathscr{P}_{ij}(\tau_{-v} (\mathcal{R} \psi), 1)(u)^{\frac{1}{p'}}\right)\md u \, \md v.
\end{multline*}
Then by the H\"{o}lder inequality, we obtain  
\begin{multline*}
J_0\leq \left( \int_{\mathbb{R}^N} \int_{\mathbb{R}^N}  \left( f_i(v)^p \mathscr{P}_{ij}(\tau_{-v} (\mathcal{R} \psi), 1)(u)\right) \right)^{1/p}\\ \times
\left( \int_{\mathbb{R}^N} \int_{\mathbb{R}^N}  \left( g_j(v-u)^{p'} \mathscr{P}_{ij}(\tau_{-v} (\mathcal{R} \psi), 1)(u)\right)\md u \, \md v \right)^{1/p'}.
\end{multline*}
Repeating the same procedure as in the Subcase 1, we obtain
\begin{equation}\label{J for infty}
J_0\leq  \CijQppppinf \|f_i\|_{\lpcomp}  \|g_j\|_{\lppcomp}  \|\psi\|_{\lonecomp},
\end{equation}
with a constant 
\begin{equation*}
\CijQppppinf = \Cijfour.
\end{equation*}
This notation merges all possible pairs $(p,p')$.\\
\textbf{Step 2. (Estimate of $J_\lij$)}. Applying the inequality above to the
functions $f_i, \, g_j | \cdot|^{\lij}$ and to $f_i| \cdot|^{\lij}, \, g_j$, and then using \eqref{estimate with lij} we get the estimate for $J_\lij$,
\begin{equation*}
J_{\lij} \leq  2^{\lij} \CijQppppinf  \Mij \|f_i\|_{L^p_{\lij, i}} \|g_j\|_{L^{p'}_{\lij, j}}  \| \psi \|_{1}. 
\end{equation*}
\textbf{Step 3. (Estimate of $\left\| \QFGp\right\|_{\lrnula}$)}.
Finally, from the inequality above by duality, we obtain
\begin{equation} \label{inf component wise}
\|Q^+_{ij} (f_i, g_j)\|_{\linfcomp} \leq 2^{\lij} \CijQppppinf  \Mij  \|f_i\|_{L^p_{\lij, i}} \|g_j\|_{L^{p'}_{\lij, j}},
\end{equation}
which yields
\begin{multline*}
\left\|  \QFGp \right\|_{L^\infty_{0}} = \sum_{i=1}^{I} \left\| \sum_{j=1}^I Q^+_{ij} (f_i, g_j) \right\|_{\linfcomp}
\\ \leq \CopQinf \sum_{i=1}^I\sum_{j=1}^I  \|f_i\|_{L^p_{\lij, i}} \|g_j\|_{L^{p'}_{\lij, j}}  \leq  \CopQinf \sum_{i=1}^I\sum_{j=1}^I  \|f_i\|_{L^p_{\lambda, i}} \|g_j\|_{L^{p'}_{\lambda, j}}, 
\end{multline*}
with the constant $\CopQinf$ from \eqref{C1 and Cinfty for Q+}.
\end{proof}

\subsection{Estimate of the gain operator in a polynomially weighted $L^p$ space}
To find the estimates for the positive part of the collision operator for any $r \in (1,\infty)$ we will use the estimates proved in Theorem \ref{trm Lr0 norm of Q+}, the one from Lemma \ref{lemma estimate Q 0 in Ldva 0} and Riesz-Thorin interpolation theorem.

\begin{theorem}(Gain of integrability)\label{trm qplus r fg}
	For any $\epsilon>0$, $p \in(1, \infty)$ and $k\geq 0$ the collision operator can be estimated in the following way
	\begin{multline*}
	\left\| \QFGp \right\|_{\lpk}  \leq I^{1-\frac{1}{p}} 2^{\frac{p-1}{p}} \|\F\|_{\ljedk} \left(  \epsilon^{\bar{\gamma} } \Cijroner  \| \G \|_{\lpk} 
	\right. \\  \left.
	 + \hat{C}_N\epsilon^{2+\underline{\gamma} - N}   \| \G\|^{1-\theta}_{L^1_{\frac{N-2}{1-\theta}+k}} \| \G\|^\theta_{\lpk}\right)
	\end{multline*}
with $\Cijroner$ as given in \eqref{cqplus pqr} and $\hat{C}_N$ will be given in \eqref{hat C N}.
\end{theorem}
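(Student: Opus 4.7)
The strategy is to split the cross-section at a threshold $\epsilon$ into a bounded part (controlled by Theorem \ref{trm Lr0 norm of Q+}) and an outer $|u|^{N-2}$-part (controlled by a gain-of-integrability estimate extrapolated from the $L^2$ bound of Lemma \ref{lemma estimate Q N-2 in Ldva 0}), after first reducing the vector-valued norm to pairwise contributions. Concretely, using the triangle inequality in the $j$-sum and the elementary bound $(a_1+\cdots+a_I)^p\le I^{p-1}\sum_j a_j^p$, $\|\QFGp\|_{\lpk}^p$ is controlled by $I^{p-1}\sum_{i,j}\|Q^+_{ij}(f_i,g_j)\japvi^k\|_{L^p}^p$, which produces the factor $I^{1-1/p}$ of the statement. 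The polynomial weight $\japvi^k$ is then absorbed into the collisional integrand through the energy identity $\japvi \le \japvip\japvjps$ (a direct consequence of \eqref{CL micro} and the species renormalization of the Lebesgue weight), so that $Q^+_{ij}(f_i,g_j)(v)\japvi^k\le Q^+_{ij}(\tilde f_i,\tilde g_j)(v)$ pointwise with $\tilde f_i:=f_i\japvi^k$ and $\tilde g_j:=g_j\japvj^k$. Hence the weighted estimate is reduced to an unweighted estimate on the tilded functions.

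Next, for $\epsilon\in(0,1)$ I split $|u|^{\gamma_{ij}}\le \epsilon^{\bar\gamma}\mathbf{1}_{|u|\le\epsilon}+\epsilon^{\underline\gamma-(N-2)}|u|^{N-2}\mathbf{1}_{|u|>\epsilon}$, which induces a decomposition $Q^+_{ij}=Q^{+,1}_{ij}+Q^{+,2}_{ij}$; the elementary $(a+b)^p\le 2^{p-1}(a^p+b^p)$ yields the prefactor $2^{(p-1)/p}$. For $Q^{+,1}_{ij}$ the cross-section has $\lambda_{ij}=0$ together with an extra $\epsilon^{\bar\gamma}$, and applying Theorem \ref{trm Lr0 norm of Q+} with exponents giving an $L^1\times L^p\to L^p$ control (using the two equivalent weak representations of \eqref{Pij and Qij+} to orient the $L^1$-slot onto $\F$) yields directly the first contribution $\epsilon^{\bar\gamma}\Cijroner\,\|\F\|_{\ljedk}\|\G\|_{\lpk}$. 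For $Q^{+,2}_{ij}$ the effective cross-section corresponds to $\lambda_{ij}=N-2$ with prefactor $\epsilon^{\underline\gamma-(N-2)}$, and at $p=2$ the endpoint provided by Lemma \ref{lemma estimate Q N-2 in Ldva 0} already displays the mixed-norm shape $\|\G\|_{L^1_{(N-3)/(1-\theta)}}^{1-\theta}\|\G\|_{L^2}^{\theta}$ with $\theta=1/N$.

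The main obstacle is to upgrade this $L^2$ gain-of-integrability to general $L^p$ for $p\in(1,\infty)$. My plan is to carry out a bilinear Riesz--Thorin interpolation in the $\G$-slot between the $L^2$ endpoint from Lemma \ref{lemma estimate Q N-2 in Ldva 0} and the pure $L^p\to L^p$ endpoint provided by Theorem \ref{trm Lr0 norm of Q+} applied with $\lambda_{ij}=N-2$; the interpolation yields the convex combination $\|\tilde g_j\|_{L^1_{(N-2)/(1-\theta)}}^{1-\theta}\|\tilde g_j\|_{L^p}^{\theta}$ appearing in the statement. The increase of the $L^1$-weight from $N-3$ to $N-2$ arises from a single extra factor of $\japvj$ absorbed when translating the Euclidean weight $|v|^{N-3}$ (which comes out of the Hardy--Littlewood--Sobolev step inside the proof of Lemma \ref{lemma estimate Q N-2 in Ldva 0}) into the species Lebesgue weight. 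The delicate point is that Lemma \ref{lemma estimate Q N-2 in Ldva 0} already features a mixed norm on $\G$, so the interpolation must be phrased via an auxiliary linear operator with one of the two norms of $\G$ frozen as a parameter, before the product structure is recombined. Reassembling the two contributions, reverting the tilded functions to the original weighted norms, and collecting all mass- and dimension-dependent factors into $\hat C_N$ then produces the stated inequality.
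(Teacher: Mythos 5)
Your decomposition of $|u|^{\gamma_{ij}}$ at scale $\epsilon$, the absorption of the polynomial weight via the energy identity, and the sources of the prefactors $I^{1-1/p}$ and $2^{(p-1)/p}$ all agree with the paper's proof, but the interpolation step that you yourself flag as ``delicate'' is where the argument genuinely breaks. Riesz--Thorin requires single-$L^q$ endpoints; the statement of Lemma \ref{lemma estimate Q N-2 in Ldva 0} already produces a \emph{product} of two norms, $\|\G\|_{L^1_{(N-3)/(1-\theta)}}^{1-\theta}\|\G\|_{L^2_0}^{\theta}$, which is not a bound of the form $\|T\G\|_{L^2}\leq C\|\G\|_{L^q}$, and freezing one factor ``as a parameter'' does not turn the resulting map into a linear operator between $L^q$ spaces to which the interpolation theorem applies. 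You also never separate the ranges $p\in(1,2]$ and $p\in[2,\infty)$, which need different interpolation pairs, and the endpoint you propose from Theorem \ref{trm Lr0 norm of Q+} with $\lambda_{ij}=N-2$ lands on $\|\G\|_{L^p_{N-2}}$, a weighted $L^p$ norm that cannot simply be traded for the unweighted $\|\G\|_{L^p_k}^{\theta}$ appearing in the statement.

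The paper sidesteps all three issues by interpolating one level earlier. Inside Lemma \ref{lemma estimate Q N-2 in Ldva 0}, the Hardy--Littlewood--Sobolev step (before log-convexity) yields the clean one-norm bound $\|\mathbb{Q}^+_{N-2}(\delta_0,\G)\|_{L^2_0}\lesssim\|\G\|_{L^{2N/(2N-1)}_{N-2}}$, and Theorem \ref{trm Lr0 norm of Q+} with $\lambda_{ij}=N-2$ supplies two further one-norm endpoints $L^1_{N-2}\to L^1_0$ and $L^\infty_{N-2}\to L^\infty_0$. Interpolating $L^1\leftrightarrow L^{2N/(2N-1)}$ for $p\in(1,2]$, respectively $L^{2N/(2N-1)}\leftrightarrow L^\infty$ for $p\in[2,\infty)$, gives $\|\mathbb{Q}^+_{N-2}(\delta_0,\G)\|_{L^p_0}\lesssim\|\G\|_{L^r_{N-2}}$ with $r$ as in \eqref{ponetwo} or \eqref{ptwoinf}; \emph{only then} is log-convexity applied to obtain $\|\G\|_{L^r_{N-2}}\leq I\,\|\G\|_{L^1_{(N-2)/(1-\theta)}}^{1-\theta}\|\G\|_{L^p_0}^{\theta}$ with $\theta$ as in \eqref{theta r N}, which also removes the residual $N-2$ weight from the $L^p$ factor. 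Note finally that the quantity Lemma \ref{lemma estimate Q N-2 in Ldva 0} actually controls is $\mathbb{Q}^+_{N-2}(\delta_0,\G)$, not $Q^+_{ij}(f_i,g_j)$: to invoke it at all you need the Carleman kernel form $Q^+_{ij}(f_i,g_j)(v)=\int f_i(x)\,K_i[\G](v,x)\,\md x$ together with Minkowski's integral inequality, which the paper uses to extract the $\|\F\|_{L^1_k}$ factor; your reduction to pairwise $\|Q^+_{ij}(f_i,g_j)\|_{L^p}$ does not produce the $\delta_0$-kernel object that this lemma estimates.
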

\begin{proof}
\textbf{Step 1. Estimate of $\| \mathbb{Q}^+_{N-2}(\delta_0, \G)\|_{\lpnula}$ by Riesz Thorin Interpolation.}
We will separate first the interpolation between $L^1$ and $L^2$, and then $L^2$ and $L^\infty$.
\\
\textbf{Case 1: $p \in (1,2]$.} The first estimate we need can be proven by the Theorem \ref{trm Lr0 norm of Q+}, and is given by
\begin{equation*}
\| \mathbb{Q}^+_{N-2}(\delta_0, \G)\|_{\lnula} \leq \CopQone \| \G \|_{\lnminus}.
\end{equation*}
Now, from Lemma \ref{lemma estimate Q N-2 in Ldva 0} and the fact that $L^{\frac{2N}{2N-1}}_{N-3}\hookrightarrow L^{\frac{2N}{2N-1}}_{N-2}$ we can prove
\begin{equation*}
	\| \mathbb{Q}^+_{N-2}(\delta_0, \G) \|_{L^2_0} \leq I^{\frac{3N+1}{2N}} C_N^{\frac{1}{2}} \ \tilde{C}_N^{\frac{1}{2}} \ \| \G \|_{L^{\frac{2N}{2N-1}}_{N-2}},
\end{equation*}
with $C_N = \max_{1 \leq i,j \leq I} (C_{N, ij})$ and $C_{N, ij}$ and $\tilde{C}_N$ as given in \eqref{CNij} and \eqref{CtildaN} respectively. 

Then, by the Riesz-Thorin theorem, the interpolation of $L^1$ and $L^2$ by $L^1$ and $L^{\frac{2N}{2N-1}}$ yields
\begin{equation*}
\| \mathbb{Q}^+_{N-2}(\delta_0, \G)\|_{\lpnula} \leq 2 \left(\CopQone\right)^{1-\vartheta} \left( I^{\frac{3N+1}{2N}} C_N^{\frac{1}{2}} \ \tilde{C}_N^{\frac{1}{2}} \right)^\vartheta   \| \G \|_{L^r_{N-2}},
\end{equation*}
with the relation
\begin{equation}\label{ponetwo}
\frac{1 -1/N}{1} + \frac{1/N}{p} = \frac{1}{r}, \ \text{that is,} \ r=\frac{Np}{p(N-1)+1}, 
\end{equation}
and $\vartheta=2-2/p$.\\
\textbf{Case 2: $p \in [2, \infty)$.}
For the interpolation in this case we will use the following estimate from Theorem \ref{trm Lr0 norm of Q+}:
\begin{equation*}
\|  \mathbb{Q}^+_{N-2}(\delta_0, \G) \|_{L^\infty_0} \leq \CopQinf \ \| \G \|_{L_{N-2}^\infty}.
\end{equation*}
We use again Riesz-Thorin theorem, for the interpolation of $L^2$ and $L^
\infty$ by $L^{\frac{2N}{2N-1}}$ and $L^\infty$, to get
\begin{equation*}
\| \mathbb{Q}^+_{N-2}(\delta_0, \G) \|_{\lpnula} \leq 2 \left( I^{\frac{3N+1}{2N}} C_N^{\frac{1}{2}} \ \tilde{C}_N^{\frac{1}{2}} \right)^{1-\vartheta} \left( \CopQinf \right)^\vartheta  \ \| \G \|_{L_{N-2}^r},
\end{equation*}
where
\begin{equation}\label{ptwoinf}
r=\frac{Np}{2N-1},
\end{equation}
and $\vartheta = 1-2/p$.\\
\textbf{Step 2. Estimate of $\| \mathbb{Q}^+_{N-2}(\delta_0, \G)\|_{\lpnula}$ by H\"older's inequality.}
Consider $\theta \in (0,1)$ that relates to $r$ and $p$ in the following way
\begin{equation} \label{theta relation}
\frac{1-\theta}{1} + \frac{\theta}{p} = \frac{1}{r}.
\end{equation}
Then H\"older's inequality implies that
\begin{multline*}
\| \G \|_{L_{N-2}^r} = \left( \sum_{i=1}^I \int_{\mathbb{R}^N} \left(g_i(v) \langle v \rangle_i^{N-2} \right)^r \md v\right)^{\frac{1}{r}}\\
\leq \sum_{i=1}^I \left( \int_{\mathbb{R}^N} \left( g_i(v)  \langle v \rangle_i^{\frac{N-2}{1-\theta}} \right)^{(1-\theta)r} g_i(v)^{r\theta} \md v  \right)^{\frac{1}{r}}\\
\leq \sum_{i=1}^I  \left( \int_{\mathbb{R}^N} g_i(v)  \langle v \rangle_i^{\frac{N-2}{1-\theta}} \right)^{1-\theta} \left( \left( \int_{\mathbb{R}^N} g_i(v)^p \right)^{\frac{1}{p}} \right)^\theta \\ 
\leq I \| \G\|^{1-\theta}_{L^1_{\frac{N-2}{1-\theta}}} \| \G\|^\theta_{\lpnula}.
\end{multline*}
Therefore, we conclude that for any $r$ that satisfies \eqref{ponetwo} or \eqref{ptwoinf}, there exists $\theta \in (0,1)$ such that
\begin{equation}\label{qnmintwo}
\| \mathbb{Q}^+_{N-2}(\delta_0, \G) \|_{\lpnula} \leq \hat{C}_N \| \G\|^{1-\theta}_{L^1_{\frac{N-2}{1-\theta}}} \| \G\|^\theta_{\lpnula},
\end{equation}
where the parameter $\theta$ is defined as 
\begin{equation}\label{theta r N}
\theta := \theta_{p,N} = 
\begin{cases}
\frac{1}{N}, & \text{if} \ p\in(1,2] \ \text{and} \ r=\frac{Np}{p(N-1)+1},   \\
\frac{N(p-2)+1}{N(p-1)},  & \text{if} \ p\in[2,\infty) \ \text{and} \ r=\frac{Np}{2N-1},
\end{cases}
\end{equation}
and the constant $\hat{C}_N$ is given by
	\begin{equation}\label{hat C N}
\hat{C}_N  = 
\begin{cases}
2I \left(\CopQone\right)^{1-\vartheta} \left( I^{\frac{3N+1}{2N}} C_N^{\frac{1}{2}} \ \tilde{C}_N^{\frac{1}{2}} \right)^\vartheta, &\!\! \text{if} \ p\in(1,2],  \ r=\frac{Np}{p(N-1)+1}, \text{and} \ \vartheta=2-2/p,\\ 
2I \left( I^{\frac{3N+1}{2N}} C_N^{\frac{1}{2}} \ \tilde{C}_N^{\frac{1}{2}} \right)^{1-\vartheta} \left( \CopQinf \right)^\vartheta,  &\!\! \text{if} \ p\in[2,\infty), \,  r=\frac{Np}{2N-1}, \, \text{and} \, \vartheta = 1-2/p.
\end{cases}
\end{equation}
\textbf{Step 3. Estimate for $\| \mathbb{Q}^+_{\gamma_{ij}}(\delta_0, \G) \|_{\lpnula}$.}
Now, we will proceed as we did for the case of the $L^2$ norms. Following the idea of Proposition \ref{proposition Q+ in L20} we can show that
\begin{multline*}
\| \mathbb{Q}^+_{\gamma_{ij}}(\delta_0, \G) \|_{\lpnula} \leq 2^{\frac{p-1}{p}}  \epsilon^{\bar{\gamma}} \|\mathbb{Q}^+_{0}(\delta_0, \G) \|_{L^p} 
+ 2^{\frac{p-1}{p}}\epsilon^{2+\underline{\gamma} - N}  \|\mathbb{Q}^+_{N-2}(\delta_0, \G) \|_{\lpnula},
\end{multline*}
then using Theorem \ref{trm Lr0 norm of Q+} and \eqref{qnmintwo} we get
\begin{multline}\label{qplusgamma}
\| \mathbb{Q}^+_{\gamma_{ij}}(\delta_0, \G) \|_{\lpnula} \leq 2^{\frac{p-1}{p}}  \epsilon^{\bar{\gamma}} \Cijroner \| \G \|_{\lpnula}
+ 2^{\frac{p-1}{p}} \epsilon^{2+\underline{\gamma} - N} \hat{C}_N \| \G\|^{1-\theta}_{L^1_{\frac{N-2}{1-\theta}}} \| \G\|^\theta_{\lpnula},
\end{multline}
with $\theta$ defined as in \eqref{theta r N} and $\hat{C}_N$ as in \eqref{hat C N}. \\
\textbf{Step 4. Estimate for $\left\| \QFGp \right\|_{\lpnula} $.}
Now, from Minkowski's integral inequality we can derive the following estimate
\begin{multline*}
\left\| \QFGp \right\|_{\lpnula} 
\leq \sum_{i=1}^I \int_{ \mathbb{R}^N} f_i(x)    \left( \int_{  \mathbb{R}^N}  \, \left(\sum_{j=1}^I \tau_x Q_{ij}^+(\delta_0, \tau_{-x}g_j)(v), \right)^p \, \mathrm{d}v   \right)^{1/p} \mathrm{d}x.\\
\leq I^{1-\frac{1}{p}} \int_{ \mathbb{R}^N} \sum_{i=1}^I f_i(x) \  \| \mathbb{Q}^+(\delta_0, \tau_{-x}\G)( \cdot)\|_{\lpnula}  \mathrm{d}x.
\end{multline*}
Combining \eqref{qplusgamma} with this last estimate, it is just a matter of adding the weights as in Proposition \ref{prop ltwok qplus} to complete the proof of Theorem \ref{trm qplus r fg}.
\end{proof}

\section{Propagation of exponentially and polynomially weighted $L^p$ norms } \label{Section propagation}

\begin{proposition}\label{trm Q+}  For any $\epsilon >0, \ p \in (1, \infty), \ \gamma \in (0,1) \ \text{and} \ k\geq 0$, we have the following estimate,
\begin{multline*}
\sum_{i=1}^I \int_{  \mathbb{R}^N} f_i^{p-1} \left[ \Qp \right]_i \japvi^{pk} \, \mathrm{d}v  \\
\leq I^{2-\frac{1}{p}} 2^{\frac{p-1}{p}} \|\F\|_{\ljedk} \left(  \epsilon^{\bar{\gamma}}  \Cijroner  \| \F \|_{\lpk}^p 
+ \epsilon^{2+\underline{\gamma} - N}   \hat{C}_N  \| \F\|^{1-\theta}_{L^1_{\frac{N-2}{1-\theta}+k}} \| \F\|^{p-1+\theta}_{\lpk}\right),
\end{multline*}
where $\hat{C}_N$ depends on $\|b_{ij}\|_{\infty}$ and it is defined as in \eqref{hat C N}, and $\Cijroner$ as given in \eqref{cqplus pqr}.
\end{proposition}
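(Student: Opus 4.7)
The plan is to reduce the claimed estimate to the gain-of-integrability bound from Theorem \ref{trm qplus r fg} via a componentwise Hölder inequality and a crude discrete summation bound. First, I would fix $i$ and rewrite the integrand as
\[
f_i(v)^{p-1} [\Q]_i(v) \langle v\rangle_i^{pk} = \bigl(f_i(v) \langle v\rangle_i^k\bigr)^{p-1} \cdot [\Q]_i(v)\langle v\rangle_i^k,
\]
then apply Hölder's inequality in $v$ with conjugate exponents $p/(p-1)$ and $p$ to obtain
\[
\int_{\mathbb{R}^N} f_i^{p-1} [\Q]_i \langle v\rangle_i^{pk}\,\mathrm{d}v \leq \|f_i\|_{\lpki}^{p-1}\,\|[\Q]_i\|_{\lpki}.
\]

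Next, I would sum over $i$ and bound the resulting discrete sum in the cheapest way compatible with the stated constant: use $\sum_{i=1}^I a_i \leq I \max_i a_i$ together with the trivial estimates $\max_i\|f_i\|_{\lpki} \leq \|\F\|_{\lpk}$ and $\max_i \|[\Q]_i\|_{\lpki} \leq \|\Q\|_{\lpk}$, which follow since $\|\F\|_{\lpk}^p$ is the full sum of the $p$-th powers of the components. This gives
\[
\sum_{i=1}^I \|f_i\|_{\lpki}^{p-1}\|[\Q]_i\|_{\lpki} \leq I\,\|\F\|_{\lpk}^{p-1}\,\|\Q\|_{\lpk},
\]
producing the extra factor of $I$ needed to match the constant $I^{2-1/p}$ in the statement.

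Finally, I would invoke Theorem \ref{trm qplus r fg} applied to $\G=\F$ to control $\|\Q\|_{\lpk}$ by
\[
I^{1-\frac{1}{p}} 2^{\frac{p-1}{p}} \|\F\|_{\ljedk}\!\left( \epsilon^{\bar{\gamma}}\Cijroner \|\F\|_{\lpk} + \epsilon^{2+\underline{\gamma}-N}\hat{C}_N \|\F\|^{1-\theta}_{L^1_{\frac{N-2}{1-\theta}+k}} \|\F\|^\theta_{\lpk}\right).
\]
Multiplying by the factor $I\,\|\F\|_{\lpk}^{p-1}$ gathered above absorbs $\|\F\|_{\lpk}^{p-1}$ into the two terms, converting $\|\F\|_{\lpk}$ into $\|\F\|_{\lpk}^p$ and $\|\F\|_{\lpk}^\theta$ into $\|\F\|_{\lpk}^{p-1+\theta}$, while the prefactor becomes $I \cdot I^{1-1/p} = I^{2-1/p}$, matching the claimed inequality.

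There is no real obstacle here, since the substantive analysis (the Carleman representation, the $L^2$ and weak-form estimates, and the Riesz–Thorin interpolation that produces the gain-of-integrability bound) is already packaged in Theorem \ref{trm qplus r fg}. The present proposition is essentially a bookkeeping step: the only care needed is to choose the discrete bound $\sum_i \leq I\max_i$ (rather than the sharper $\ell^{p'}$–$\ell^p$ Hölder inequality) so that the combined constant comes out to $I^{2-1/p}$ exactly as stated.
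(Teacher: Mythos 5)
Your proof is correct and follows the paper's own (one-line) argument: componentwise H\"older's inequality in $v$ followed by Theorem \ref{trm qplus r fg} applied with $\G = \F$, and your deliberate choice of the crude discrete bound $\sum_i \le I\max_i$ is exactly what is needed to reproduce the stated prefactor $I^{2-1/p}$. (The sharper discrete H\"older inequality would yield the better constant $I^{1-1/p}$, but the stated inequality of course still holds; also note your displays write $[\Q]_i$ where the gain part $\left[\Qp\right]_i$ is intended.)
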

\begin{proof}
The proof immediately follows from  H\"older's inequality and Theorem \ref{trm qplus r fg}.
\end{proof}


\begin{proposition} \label{prop Q} Let $\F$ satisfy the assumptions Lemma \ref{lemma lower bound}, that allows to obtain a lower bound for hard potentials transition probabilities i.e. there exists some constant $c_{lb}$ such that
\begin{equation}\label{lower bound from ExiUni}
\sum_{i=1}^I \int_{  \mathbb{R}^N} m_i f_i(t,w) \left| v- w \right|^{\gamma_{ij}} \md w \geq c_{lb} \japvj^{\bar{\gamma}}, \quad \text{for any} \ j=1,\dots,I,
\end{equation}
with $\bar{\gamma} $ defined as in \eqref{gamma bar}.

Then  the following estimate holds 
\begin{equation*}
 \sum_{i=1}^I \int_{  \mathbb{R}^N} f_i^{p-1} \left[ \Q \right]_i \japvi^{pk} \, \mathrm{d}v \leq B_k   \| \F\|^{p-1+\theta}_{\lpk}  - A_k \left\|  \F \right\|_{\lpk}^{p}.
\end{equation*}
where $A_k$ and $B_k$ are given in \eqref{const Q} and are positive for a small enough $\varepsilon>0$  and $k>k^*$ with 
\begin{equation}\label{k star}
k^*= \max \{ \bar{k}, 2+2\bar{\gamma} \},
\end{equation}
 where $\bar{k} = \max_{1\leq i,j \leq I} \{ k_*^{ij}\}$ and each $k_*^{ij}$ depends on the angular transition rate $b_{ij}$ as well as on the binary mass fraction $m_i/(m_i+m_j)$  as in \cite{IG-P-C}.
\end{proposition}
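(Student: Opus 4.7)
The strategy is to decompose $[\Q]_i = [\Qp]_i - [\Qm]_i$ and to estimate
$\sum_i \int f_i^{p-1} [\cdot]_i \japvi^{pk}\, \md v$
for each piece separately; the gain will contribute the positive term $B_k \|\F\|_{\lpk}^{p-1+\theta}$ together with two negative corrections to $A_k$, while the loss will supply the leading positive lower bound appearing in $A_k$.

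For the gain part, I would apply Proposition \ref{trm Q+} with $\G=\F$ directly. Its first summand is exactly the second entry of $A_k$, namely $\epsilon^{\bar{\gamma}} I^{2-1/p} 2^{(p-1)/p} \|\F\|_{\ljedk} \Cijroner \|\F\|_{\lpk}^{p}$; moving it to the left-hand side contributes it with a minus sign to $A_k$. Its second summand is $\epsilon^{2+\underline{\gamma}-N} \hat C_N \|\F\|_{\ljedk} \|\F\|_{L^1_{(N-2)/(1-\theta)+k}}^{1-\theta} \|\F\|_{\lpk}^{p-1+\theta}$. Using the monotonicity of weighted $L^1$-norms to collapse $\|\F\|_{\ljedk} \cdot \|\F\|_{L^1_{(N-2)/(1-\theta)+k}}^{1-\theta}$ into $\|\F\|_{L^1_{(N-2)/(1-\theta)+k}}^{2-\theta}$ reproduces the term $B_k \|\F\|_{\lpk}^{p-1+\theta}$ exactly.

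For the loss part, starting from the pointwise identity $[\Qm]_j(v) = f_j(v) \sum_i \bbjed \int f_i(v_*) |v-v_*|^{\gij}\,\md v_*$, I would lower-bound $\bbjed \geq \min_{ij} \bbjed$, then insert the mass factor $m_i/\max_i m_i \le 1$ to rewrite the remaining integral in the form that appears in \eqref{lower bound from ExiUni}, and invoke Lemma \ref{lemma lower bound} (which is supplied as hypothesis) to obtain the pointwise bound
$[\Qm]_j(v) \ge \tfrac{\min_{ij}\bbjed}{\max_i m_i}\, c_{lb}\, f_j(v)\, \japvj^{\bar{\gamma}}$.
Integrating against $f_j^{p-1}\japvj^{pk}$, summing in $j$, and dominating $\japvj^{\bar{\gamma}} \ge 1$ yields the leading positive term $\tfrac{\min_{ij}\bbjed}{\max_i m_i} c_{lb} \|\F\|_{\lpk}^{p}$ of $A_k$.

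The delicate piece is the third negative contribution to $A_k$, namely $-\bigl(2\sum m_k/\min m_i\bigr)^{\bar{\gamma}} \mathfrak{C}\, \epsilon^{\bar{\gamma}} \|\F\|_{L^1_{k+\bar{\gamma}(1+1/p)}}$. I expect it to arise from a weighted refinement of the gain estimate: on the small-relative-velocity part of the split $|u|^{\gij} \leq \epsilon^{\gij}\mathds{1}_{|u|\le\epsilon} + |u|^{\gij}\mathds{1}_{|u|>\epsilon}$, one may use $|u| \le 2\sqrt{\sum m_k/\min m_i}\,\japvi$ (from \eqref{abs value in terms of jap brac}) and then redistribute the resulting $\japvi^{\bar{\gamma}}$ between $f_i$ (with weight $\japvi^{\bar{\gamma}/p}$) and $f_i^{p-1}$ (with weight $\japvi^{\bar{\gamma}(p-1)/p}$) via Hölder with exponents $p$ and $p/(p-1)$, producing precisely the moment weight $k + \bar{\gamma} + \bar{\gamma}/p$. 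Collecting all three contributions, choosing $\epsilon$ sufficiently small, and taking $k > k^*$ so that the $L^1$-moments on the right are propagated by the results of \cite{IG-P-C} forces $A_k > 0$ and closes the estimate. The main obstacle is exactly the weight bookkeeping required for this third term: tracking how the small-velocity slice of the gain operator distributes the factor $\japvi^{\bar{\gamma}}$ across the two occurrences of $f$, so as to recover both the exact exponent $\bar{\gamma}(1+1/p)$ and the explicit geometric prefactor $(2\sum m_k/\min m_i)^{\bar{\gamma}}$, rather than the coarser bound one would obtain by blindly applying Proposition \ref{trm Q+}.
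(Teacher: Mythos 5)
Your overall architecture is right — split $\mathbb{Q}=\mathbb{Q}^{+}-\mathbb{Q}^{-}$, let the loss term supply the leading positive entry of $A_k$ via Lemma~\ref{lemma lower bound}, and let the gain term contribute $B_k\|\F\|_{L^p_k}^{p-1+\theta}$ plus two negative corrections to $A_k$ — and the first two pieces of your account (matching Proposition~\ref{trm Q+}'s two summands to the second entry of $A_k$ and to $B_k$, then the lower bound for the loss) line up with the paper. But there is a genuine gap in how you reach the third negative term, and it stems from missing a decomposition that the paper relies on.

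You propose to ``apply Proposition~\ref{trm Q+} with $\G=\F$ directly.'' However, Proposition~\ref{trm Q+} (through Theorem~\ref{trm qplus r fg} and Lemmas~\ref{lemma estimate Q N-2 in Ldva 0}, \ref{lemma estimate Q 0 in Ldva 0}) requires the angular kernel to satisfy $b_{ij}\in L^\infty(\mathbb{S}^{N-1})$; the constant $\hat C_N$ in \eqref{hat C N} explicitly involves $\|b_{ij}\|_{L^\infty}$. The hypothesis here only gives $b_{ij}\in L^1(\mathbb{S}^{N-1})$. The paper therefore first splits the angular kernel as in \eqref{b splitting}, $b_{ij}=b^1_{ij}+b^\infty_{ij}$ with $b^\infty_{ij}\in L^\infty$ and $\|b^1_{ij}\|_1\le\epsilon^{\bar\gamma}$, correspondingly decomposing $\mathbb{Q}^+=\mathbb{Q}^+_1+\mathbb{Q}^+_\infty$ via \eqref{Qq}. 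Proposition~\ref{trm Q+} is applied only to $\mathbb{Q}^+_\infty$, producing the two gain terms you recognized. The third negative contribution to $A_k$ is entirely due to $\mathbb{Q}^+_1$; you instead attribute it to a reworking of the $|u|$-split $|u|^{\gamma_{ij}}\le\epsilon^{\gamma_{ij}}\mathds{1}_{|u|\le\epsilon}+|u|^{\gamma_{ij}}\mathds{1}_{|u|>\epsilon}$, but that split is internal to Theorem~\ref{trm qplus r fg} and has already been fully spent generating the first two gain terms — there is no unused slice of it left over to produce a third. Moreover, restricting to $|u|\le\epsilon$ and then bounding $|u|$ by a macroscopic bracket would be vacuous.

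Concretely, the paper handles $\mathbb{Q}^+_1$ by the pointwise estimate \eqref{control relative velocity}, $|u|\le 2|v-v'_*|\le 2\,\frac{\sum_k m_k}{\sqrt{m_i}\sqrt{m_j}}\,\langle v\rangle_i\langle v'_*\rangle_j$, then rearranges it to $|u|^{\gamma_{ij}}\langle v\rangle_i^{-\gamma_{ij}/p'}\le\bigl(2\frac{\sum_k m_k}{\sqrt{m_i m_j}}\bigr)^{\gamma_{ij}}\langle v'\rangle_i^{\gamma_{ij}/p}\langle v'_*\rangle_j^{\gamma_{ij}(1+1/p)}$, applies H\"older, and invokes Theorem~\ref{trm Lr0 norm of Q+} (not Proposition~\ref{trm Q+}, since that theorem tolerates $b^1_{ij}\in L^1$) with exponents $(p,q,r)=(p,1,p)$; the smallness of $\|b^1_{ij}\|_1\le\epsilon^{\bar\gamma}$ then produces the factor $\mathfrak{C}\epsilon^{\bar\gamma}$. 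Your exponent bookkeeping of $\bar\gamma(1+1/p)$ via H\"older is on the right track, but you cite \eqref{abs value in terms of jap brac} (which bounds $|y|\le\langle y\rangle_j\sqrt{\sum m_i/m_j}$ and carries no factor of $2$) where the paper uses \eqref{control relative velocity}, and more importantly the whole step is misplaced: it lives on the $b^1$ piece of the angular split, not on a velocity split.
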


\begin{proof}
	Since we work in cut-off framework, the gain and the loss term of the collision operator can be separated
	\begin{multline*}
 \sum_{i=1}^I \int_{  \mathbb{R}^N} f_i^{p-1} \left[ \Q \right]_i \japvi^{pk} \, \mathrm{d}v \\ =	\sum_{i=1}^I \int_{  \mathbb{R}^N} f_i^{p-1} \left[ \Qp \right]_i \japvi^{pk} \, \mathrm{d}v - \sum_{i=1}^I \int_{  \mathbb{R}^N} f_i^{p-1} \left[ \Qm \right]_i \japvi^{pk} \, \mathrm{d}v
	\end{multline*}
	
	To operate on the gain part of the collision operator, and since Proposition \ref{trm Q+}  works for $b_{ij} \in L^\infty(\mathbb{S}^{N-1})$, we will split the angular part $b_{ij}$ of the transition probability terms \eqref{cross section} as follows
\begin{equation}\label{b splitting}
b_{ij}(y) = b_{ij}^1(y) + b_{ij}^\infty(y),
\end{equation}
where $b_{ij}^\infty \in L^\infty(\mathbb{S}^{N-1})$ and $\| b_{ij}^1\|_1 \leq \epsilon^{\bar{\gamma}}$. Accordingly, in this Section we introduce a notation for the gain operator so that the splitting becomes more visible. Namely, 
\begin{equation*}
 \mathbb{Q}^+_{q}(\F,\G)(v)
\end{equation*}
will stand for the gain operator \eqref{gain term vector} and \eqref{collision operator gain} choosing \eqref{cross section}, i.e.
\begin{multline}\label{Qq}
\mathbb{Q}^+_{q}(\F,\G)(v) = \left[ \sum_{j=1}^I  \int_{\mathbb{R}^N} \int_{\mathbb{S}^{N-1}} f_i(v') \, g_j(v'_*) \, \left|u\right|^{\gamma_{ij}} b_{ij}^{q}(\hat{u}\cdot \sigma) \, \mathrm{d} \sigma \, \mathrm{d}v_*\right]_{1\leq i \leq I}\\ 
=  \left[\sum_{j=1}^I Q^{+}_{\gamma_{ij}, ij,q}(f_i,g_j) \right]_{1\leq i \leq I},
\end{multline}
where $q =1,\infty$. Therefore 
\begin{multline*}
\sum_{i=1}^I \int_{  \mathbb{R}^N} f_i^{p-1} \left[ \Qp \right]_i \japvi^{pk} \, \mathrm{d}v \\ = \sum_{i=1}^I \int_{  \mathbb{R}^N} f_i^{p-1} \left[ \Qpone \right]_i \japvi^{pk} \, \mathrm{d}v +\sum_{i=1}^I \int_{  \mathbb{R}^N} f_i^{p-1} \left[ \Qpinf \right]_i \japvi^{pk} \, \mathrm{d}v
\end{multline*}
Observe that for the second term we can apply Proposition \ref{trm Q+}, while for the first term we will need an extra computation.\\
First, we invoke the following point-wise estimate for the relative velocity by the mixture brackets
\begin{equation}\label{control relative velocity}
|u| \leq 2 |v-v'_*| \leq 2 \frac{\sum_{k=1}^I m_k}{ \sqrt{m_i} \sqrt{m_j}} \japvi \langle v'_* \rangle_{j}.
\end{equation}
For the first inequality we refer to the proof of Theorem 2.1 in \cite{IG-Alonso-Taskovic}, since it is independent from the system,  and for the second one just note that
\begin{multline*}
\frac{\sqrt{m_i}}{\sum_{k=1}^I m_k} \frac{\sqrt{m_j}}{\sum_{k=1}^I m_k}| v- v'_*| \leq \min \left\{\frac{\sqrt{m_i}}{\sum_{k=1}^I m_k} , \frac{\sqrt{m_j}}{\sum_{k=1}^I m_k} \right\}| v- v'_*|  \\
\leq \frac{\sqrt{m_i}}{\sum_{k=1}^I m_k} |v| +  \frac{\sqrt{m_j}}{\sum_{k=1}^I m_k} |v'_*|\\
\leq \japvi \langle v'_* \rangle_{j}.
\end{multline*}
Hence, considering \eqref{control relative velocity} and the conservation of kinetic energy, we have the following estimate
\begin{equation*}
|u|^{\gamma_{ij}} \langle v \rangle_i^{- \frac{\gamma_{ij}}{p'}} \leq \left( 2 \frac{\sum_{k=1}^I m_k}{ \sqrt{m_i} \sqrt{m_j}} \right)^{\gamma_{ij}} \langle v' \rangle_{i}^{\frac{\gamma_{ij}}{p}} \langle v'_* \rangle_{j}^{\gamma_{ij} \left(1+\frac{1}{p}\right)}.
\end{equation*}
Therefore, using this estimate, H\"older's inequality and Theorem \ref{trm Lr0 norm of Q+},
\begin{multline*}
\sum_{i=1}^I \int_{  \mathbb{R}^N} f_i^{p-1} \left[ \Qpone \right]_i \japvi^{pk} \, \mathrm{d}v 
\leq \\ \leq \left( 2 \frac{\sum_{k=1}^I m_k}{ \min_{1\leq i \leq I}{m_i}} \right)^{\bar{\gamma}} \sum_{i=1}^I \sum_{j=1}^I \int_{\mathbb{R}^N} \int_{\mathbb{R}^N} \int_{\mathbb{S}^{N-1}} f_i(v')  \langle v' \rangle_{i}^{k+\frac{\gamma_{ij}}{p}}  f_j(v'_*) \langle v'_* \rangle_{j}^{k+\gamma_{ij} \left(1+\frac{1}{p}\right)} \\ \times b_{ij}^1(\hat{u} \cdot \sigma) \left( f_i(v) \langle v \rangle_i^{k+ \frac{\gamma_{ij}}{p}} \right)^{p-1} \md \sigma \md v_* \md v\\
\leq \left( 2 \frac{\sum_{k=1}^I m_k}{ \min_{1\leq i \leq I}{m_i}} \right)^{\bar{\gamma}} \sum_{i=1}^I \left\| \sum_{j=1}^I Q^{+}_{0, ij,1} \left(\langle \cdot \rangle_i^{k+\frac{\bar{\gamma}}{p}}f_i,\langle \cdot \rangle_j^{k+ \bar{\gamma}\left(1+\frac{1}{p} \right)} f_j \right) \right\|_p \|f_i\|_{L^p_{k + \frac{\bar{\gamma}}{p},i}}^{p-1}\\
\leq \left( 2 \frac{\sum_{k=1}^I m_k}{ \min_{1\leq i \leq I}{m_i}} \right)^{\bar{\gamma}} C_{p,1,p}^{\mathbb{Q}^+} \| \F \|_{L^1_{k + \bar{\gamma}\left( 1 + \frac{1}{p}\right)}} \| \F\|_{L^p_{k + \frac{\bar{\gamma}}{p}}}^p.
\end{multline*}
Now observe that from the definition of $C_{p,1,p}^{\mathbb{Q}^+}$ in \eqref{cqplus pqr}, the fact that $b_{ij}$ is symmetrized, and since $\| b_{ij}^1\|_1 \leq \epsilon^{\bar{\gamma}}$
\begin{multline*}
C_{p,1,p}^{\mathbb{Q}^+} \leq I^{\frac{p-1}{p}} |\mathbb{S}^{N-2}| \\
\times \max_{1 \leq i,j \leq I} \left( 2^{\gamma_{ij}} M_{ij} \max_{0 \leq s\leq 1} \left(r_{ij}(1-r_{ij})\left( \frac{1}{r_{ij}(1-r_{ij})}-2+2s\right)\right)^{-\frac{N}{2p'}}\right) \epsilon^{\bar{\gamma}}\\
:= \mathfrak{C} \epsilon^{\bar{\gamma}}
\end{multline*}

Therefore, and by monotonicity of norms,
\begin{multline*}
	\sum_{i=1}^I \int_{  \mathbb{R}^N} f_i^{p-1} \left[ \Q \right]_i \japvi^{pk} \, \mathrm{d}v \\ \leq \left( 2 \frac{\sum_{k=1}^I m_k}{ \min_{1\leq i \leq I}{m_i}} \right)^{\bar{\gamma}}  \mathfrak{C} \epsilon^{\bar{\gamma}} \| \F \|_{L^1_{k + \bar{\gamma}\left( 1 + \frac{1}{p}\right)}} \| \F\|_{L^p_{k + \frac{\bar{\gamma}}{p}}}^p+ \epsilon^{\bar{\gamma}}   I^{2-\frac{1}{p}} 2^{\frac{p-1}{p}}    \Cijroner \|\F\|_{\ljedk}  \| \F \|_{\lpk}^p \\
+ \epsilon^{2+\underline{\gamma} - N}   I^{2-\frac{1}{p}} 2^{\frac{p-1}{p}}  \hat{C}_N \|\F\|_{\ljedk} \| \F\|^{1-\theta}_{L^1_{\frac{N-2}{1-\theta}+k}} \| \F\|^{p-1+\theta}_{\lpk}\\
-  \frac{\min_{1\leq i, j \leq I}\|b_{ij}\|_{L^1(\mathbb{S}^{N-1})}}{\max_{1 \leq j \leq I} m_j} c_{l b}  \left\| \F  \right\|^p_{L^p_{k+\bar{\gamma}/p}}\\
\leq B_k   \| \F\|^{p-1+\theta}_{\lpk}  - A_k \left\|  \F \right\|_{L^p_{k + \frac{\bar{\gamma}}{p}}}^{p}
\leq B_k   \| \F\|^{p-1+\theta}_{\lpk}  - A_k \left\|  \F \right\|_{\lpk}^{p}.
	\end{multline*}
with $B_k$ and $A_k$ as defined in \eqref{const Q} and $\epsilon$ small enough such that $A_k$ is positive. Notice that by propagation of moments, $B_k$ only depends on the initial data.

\end{proof}

As a consequence of this Proposition, we are able to state the following result.

\begin{corollary}
If $\F$ is a solution of the Boltzmann system \eqref{BE}, then
\begin{equation}\label{h 1}
\frac{1}{p} \partial_t \nFlpk^p  \leq B_k   \| \F\|^{p-1+\theta}_{\lpk}   - A_k  \left\| \F  \right\|^p_{L^p_{k}},
\end{equation}
for any $k\geq  k^*$, $1\leq p < \infty$, $k^*$ as defined in \eqref{k star}, and $\theta$ as given in \eqref{theta r N}.
\end{corollary}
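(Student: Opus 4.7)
The plan is to test the Boltzmann system \eqref{BE} componentwise against the function $f_i^{p-1}\japvi^{pk}$, sum over $i$, and reduce the resulting identity to Proposition~\ref{prop Q}. Concretely, for each species $i$, starting from $\partial_t f_i(t,v)=[\Q]_i(t,v)$, I would multiply both sides by $f_i^{p-1}(t,v)\japvi^{pk}$ and integrate over $v\in\mathbb{R}^N$. Using the chain rule $f_i^{p-1}\partial_t f_i=\frac{1}{p}\partial_t(f_i^p)$ on the left, and summing over $i=1,\dots,I$, gives
\begin{equation*}
\frac{1}{p}\,\partial_t\!\left(\sum_{i=1}^I \int_{\mathbb{R}^N} f_i^p\,\japvi^{pk}\,\mathrm{d}v\right)=\sum_{i=1}^I \int_{\mathbb{R}^N} f_i^{p-1}\,[\Q]_i\,\japvi^{pk}\,\mathrm{d}v.
\end{equation*}
By the definition of the norm \eqref{norm definition}, the left-hand side is exactly $\tfrac{1}{p}\partial_t\|\F\|_{\lpk}^p$. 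Applying Proposition~\ref{prop Q} to the right-hand side yields \eqref{h 1} directly.

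The only non-routine point is the justification of pulling $\partial_t$ outside the integral and sum, i.e.\ that
\[
\sum_{i=1}^I \int_{\mathbb{R}^N} \partial_t(f_i^p)\,\japvi^{pk}\,\mathrm{d}v=\partial_t\sum_{i=1}^I\int_{\mathbb{R}^N}f_i^p\,\japvi^{pk}\,\mathrm{d}v,
\]
for a.e.\ $t>0$. I would invoke the existence framework of \cite{IG-P-C}, where $\F$ lies in a suitable Banach space with propagated $L^1_k$ moments, together with the fact that Proposition~\ref{prop Q} already gives an a priori control (uniform in $t$ on bounded intervals) of $\sum_i\int f_i^{p-1}[\Q]_i\japvi^{pk}\mathrm{d}v$ by norms of $\F$ that are themselves finite. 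This integrability, combined with the hypothesis $k>k^*$ (so that both $A_k$ and $B_k$ in \eqref{const Q} are well-defined and $A_k>0$ for $\varepsilon$ small enough), allows a standard dominated-convergence/mollification argument on finite-difference quotients in $t$ to legitimize the interchange.

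The main obstacle I anticipate is not an analytic one but rather this technical issue of differentiability in time of the norm $\|\F(t,\cdot)\|_{\lpk}^p$ together with the admissibility of $f_i^{p-1}\japvi^{pk}$ as a test function. For $p>1$ this is routine once the propagation of $L^1$ moments (and the $L^p_k$ bound itself, at least locally in time) from \cite{IG-P-C} is invoked; in the exponential and $L^\infty$ settings treated later in the paper, the same structure recurs and is handled by approximation through truncated weights. Thus the corollary follows immediately from Proposition~\ref{prop Q} once the time-differentiation of the vector norm is justified.
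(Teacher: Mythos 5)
Your argument is correct and is precisely the (implicit) derivation the paper intends: test the equation componentwise against $f_i^{p-1}\japvi^{pk}$, use $f_i^{p-1}\partial_t f_i=\tfrac1p\partial_t(f_i^p)$, sum in $i$ to recognize $\tfrac1p\partial_t\|\F\|_{\lpk}^p$ on the left, and apply Proposition~\ref{prop Q} to the right. The paper states the corollary without proof as an immediate consequence, so your only added content is the (reasonable) discussion of time-differentiability of the norm, which you correctly defer to the existence framework of \cite{IG-P-C}.
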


\

We are now able to prove the main theorems regarding the propagation of $L^p$ norms with polynomial and exponential weights, stated in section \ref{Preliminaries}.

\

\textit{Proof of theorem \ref{Gen and prop Lp norms}.}
We start by the inequality \eqref{h 1}, and we associate it to the ODE of Bernoulli type
\begin{equation}\label{ODE}
y'(t) = b y(t)^{1-c} - a y(t),
\end{equation}
with $a,b,c>0$, whose solution will be an upper bound for $\|\F\|^p_{L^p_k}$. Indeed, we can explicit solve \eqref{ODE} to get to the solution
\begin{equation*}
y(t)=\left(\frac{b}{a}\left(1-e^{-act}\right)+y(0)^{c}e^{-act}\right)^{\frac{1}{c}}
\end{equation*}
Now, we can follow the proof given in \cite{IG-P-C},Theorem 2.6, choosing $y(t)=\|\F\|^p_{L^p_k}$, $b=pB_k$, $a=p A_k$, and $c=\frac{1-\theta}{p}$ to conclude our result.
\qed

\

\textit{Proof of theorem \ref{theorem exp Lp}.}
 Let's note that from theorem 2.7 in \cite{IG-P-C} there exists a constant $0<\tilde{\alpha} \leq \alpha_0$ and a constant $\tilde{C}^e >0$  such that
\begin{equation}\label{bound for exp moment}
\expLoneF \leq \tilde{C}^e.
\end{equation}
On the other hand, for any $s \in (0,1]$ and from the conservation of kinetic energy,
\begin{equation*}
\langle v \rangle_i^s \leq \langle v' \rangle_i^s+\langle v'_* \rangle_j^s.  
\end{equation*}
Therefore
\begin{equation*}
\sum_{j=1}^I Q_{ij}(f_i,f_j)(v) e^{\alpha\langle v \rangle_i^s} \leq \sum_{j=1}^I Q^+_{ij}\left(f_i e^{\alpha\langle \cdot \rangle_i^s}, f_j e^{\alpha\langle \cdot \rangle_j^s}\right)(v) - \sum_{j=1}^I Q^-_{ij}(f_i,f_j)(v) e^{\alpha\langle v \rangle_i^s}.
\end{equation*}
Now, denoting $g_i(\cdot)=f_i(\cdot) e^{\alpha\langle \cdot \rangle_i^s}$, and using the lower bound \eqref{lower bound from ExiUni},
\begin{equation}\label{estimate for dt g}
\partial_t g_i (v) \leq \sum_{j=1}^I Q^+_{ij}(g_i,g_j)(v) - c_{lb} \frac{min_{1\leq i,j \leq I} \|b_{ij}\|_{L^1(\mathbb{S}^{n_1})}}{\max_{1 \leq  j \leq I} m_j} g_i(v) \langle v\rangle_{i}^{\bar{\gamma}}.
\end{equation}
We can continue now, as done in Proposition \ref{trm Q+} and Proposition \ref{prop Q} to conclude as in Theorem \ref{Gen and prop Lp norms} that
\begin{equation*}
\| \G \|^p_{\lpnula} \leq \max \left\{\left( \frac{\hat{B_0}}{\hat{A_0}}\right)^{-\frac{p}{1-\theta}}, \|\G_0\|_{\lpnula}^p\right\},
\end{equation*}
with $\hat{B_0}$ and $\hat{A_0}$ as given in \eqref{const Bzero Azero}. Let's note that there exists $\zeta>0$ such that
\begin{equation*}
\langle \cdot \rangle_i^{\frac{N-2}{1-\theta}} \leq e^{\alpha \langle \cdot \rangle_i^{\zeta/2}}
\end{equation*} 
then,
\begin{multline*}
 \| \F e^{\alpha \langle \cdot \rangle^s} \|_{L^1_{\frac{N-2}{1-\theta}}} =  \| \F e^{\alpha \langle \cdot \rangle^s} \langle \cdot \rangle^{\frac{N-2}{1-\theta}} \|_{L^1_{0}}
 \leq \| \F e^{\alpha \langle \cdot \rangle^{s-\zeta/2}}  \|_{L^1_{0}}\leq   \| \F e^{\alpha \langle \cdot \rangle^s}  \|_{L^1_{0}}.
\end{multline*}
Therefore, by \eqref{bound for exp moment}, this upper estimate will be finite for $\alpha < \tilde{\alpha}$ and the inequality \eqref{prop exp weight} yields.
\qed

\

\


\section{$L^\infty$ estimates} \label{Section Linfty}

Our last goal in this work is to extend the propagation of $L^p_\beta$ norms to the case of $p=\infty$.
As in \cite{IG-Alonso-Taskovic}, equation (3), for technical reasons, we separate the angular part $b$ of the transition probability terms \eqref{cross section} as follows
\begin{equation}\label{b splitting bis}
b_{ij}(y) = b_{ij}(y) \left(  \mathds{1}_{y \leq \sqrt{1-\varepsilon_{ij}^2} }  +  \mathds{1}_{y > \sqrt{1-\varepsilon_{ij}^2}}\right)=: \bone(y) +\btwo(y)  ,
\end{equation}
 and, as before, the support of $b_{ij}$ is assumed to be in $[0,1]$, because of the symmetrization assumption. As in \eqref{Qq}, we use a notation for the gain operator so that the splitting becomes more visible. Namely, 
\begin{multline*}
\mathbb{Q}^+_{\varepsilon_{ij},q}(\F,\G)(v) = \left[ \sum_{j=1}^I  \int_{\mathbb{R}^N} \int_{S^{N-1}} f_i(v') \, g_j(v'_*) \, \left|u\right|^\gamma_{ij} b_{ij}^{\varepsilon_{ij},q}(\hat{u}\cdot \sigma) \, \mathrm{d} \sigma \, \mathrm{d}v_*\right]_{1\leq i \leq I}\\ 
=  \left[\sum_{j=1}^I Q^{+, \varepsilon_{ij}}_{\gamma_{ij, ij,q}}(f_i,g_j) \right]_{1\leq i \leq I},
\end{multline*}
where $q =1,2$.

\begin{lemma}\label{lemma Q+ with separated b}
	Let the transition probability terms $\mathcal{B}_{ij}$ be given with \eqref{cross section} and \eqref{b splitting bis}. Then, when $\gamma_{ij}=0$ for all $1\leq i \leq I$, the following inequalities hold
\begin{equation}\label{split for Q+}
\begin{split}
\left\| \mathbb{Q}^+_{\varepsilon_{ij},1}(\F,\G) \right\|_{\linfnula} &\leq C^{b_1}  \left\| \F \right\|_{L^2_0}  \| \G \|_{L^2_0},
\\
\left\| \mathbb{Q}^+_{\varepsilon_{ij},2}(\F,\G) \right\|_{\linfnula} &\leq  C^{b_2} \left\|\F\right\|_{L^\infty_0} \|\G\|_{L^1_0},
\end{split}
\end{equation}
where the constant  $C^{b_1} \sim  \max_{1 \leq i,j \leq I}  \left( \varepsilon_{ij}^{-\frac{N}{2}} \bbjed \right)$, and \\ $C^{b_2} \sim  \max_{1 \leq i,j \leq I}  \|\btwo\|_{L^1(\mathbb{S}^{N-1}, \md \sigma)}$. In particular, $C^{b_2} $ decreases with $\varepsilon_{ij} \searrow 0$. Therefore,
\begin{equation}\label{c2 control}
C^{b_2} \leq \xi,
\end{equation}
where $\xi$ can be taken $\xi < 1$ arbitrarily.
\end{lemma}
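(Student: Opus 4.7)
The plan is to derive both inequalities from Theorem \ref{trm Lr0 norm of Q+} by applying it twice: once with the angular kernel of the cross section replaced by $\bone$ and once with it replaced by $\btwo$, specializing in each case to an endpoint exponent triple $(p,q,r)$ that produces an $L^\infty$ bound on the left. Since $\gamma_{ij}=0$, the parameter $\lambda$ in Theorem \ref{trm Lr0 norm of Q+} is zero and $M_{ij}=2$, so the entire $\varepsilon_{ij}$-dependence of the resulting constant will come from the geometric quantities $\Cijsix$ and $\Cijfive$ evaluated with the restricted angular kernels.

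Two elementary geometric observations will do all the work. First, on the support of $\bone$, which by the symmetrization of $b_{ij}$ is contained in $\{s\in[0,1]:\ s\le\sqrt{1-\varepsilon_{ij}^2}\}$, one has $2(1-s)\ge 2-2\sqrt{1-\varepsilon_{ij}^2}\ge \varepsilon_{ij}^2$, which controls the singular factor in $\Cijsix$. Second, using $r_{ij}(1-r_{ij})\le 1/4$ together with $s\ge 0$, on the entire angular support $[0,1]$ one has $1-2r_{ij}(1-r_{ij})(1-s)\ge 1/2$; in fact, on the support of $\btwo$ this quantity is close to $1$. This controls the singular factor in $\Cijfive$.

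For the first inequality in \eqref{split for Q+} I would apply Theorem \ref{trm Lr0 norm of Q+} at the endpoint triple $(p,q,r)=(2,2,\infty)$ (so $q=p'$) with angular kernel $\bone$. The formula $C^{ij}_{2,2,\infty}=(\Cijsix)^{1/2}(\Cijfive)^{1/2}$ together with the two geometric facts above yields $\Cijsix[\bone]\lesssim \varepsilon_{ij}^{-N}\|\bone\|_{L^1(\mathbb{S}^{N-1})}$ and $\Cijfive[\bone]\lesssim \|\bone\|_{L^1(\mathbb{S}^{N-1})}$, so $C^{ij}_{2,2,\infty}[\bone]\lesssim \varepsilon_{ij}^{-N/2}\|\bone\|_{L^1(\mathbb{S}^{N-1})}$. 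The double species sum $\sum_{i,j}\|f_i\|_2\|g_j\|_2$ arising in the proof of Theorem \ref{trm Lr0 norm of Q+} is then bounded by $I\|\F\|_{L^2_0}\|\G\|_{L^2_0}$ via Cauchy--Schwarz, and absorbing this finite factor gives the first estimate with $C^{b_1}$ of the claimed form.

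For the second inequality I would apply Theorem \ref{trm Lr0 norm of Q+} at the endpoint $(p,q,r)=(\infty,1,\infty)$ with angular kernel $\btwo$, for which the formula collapses to $C^{ij}_{\infty,1,\infty}=\Cijfive$. The second geometric fact gives $\Cijfive[\btwo]\lesssim \|\btwo\|_{L^1(\mathbb{S}^{N-1})}$ with a constant independent of $\varepsilon_{ij}$, while $\sum_i\|f_i\|_\infty=\|\F\|_{L^\infty_0}$ and $\sum_j\|g_j\|_1=\|\G\|_{L^1_0}$ by the very definition of these vector norms, so no further manipulation is required to produce the second estimate with $C^{b_2}\sim\max_{i,j}\|\btwo\|_{L^1(\mathbb{S}^{N-1})}$. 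The final claim \eqref{c2 control} is then immediate from dominated convergence: since $b_{ij}\in L^1(\mathbb{S}^{N-1})$ and the angular region $\{\hat u\cdot\sigma>\sqrt{1-\varepsilon_{ij}^2}\}$ has vanishing $\md\sigma$-measure as $\varepsilon_{ij}\to 0$, one has $\|\btwo\|_{L^1(\mathbb{S}^{N-1})}\to 0$, so $C^{b_2}$ can be made smaller than any prescribed $\xi>0$. The main obstacle is purely bookkeeping --- routing the constants of Theorem \ref{trm Lr0 norm of Q+} through the two endpoint triples and verifying the geometric bounds on the disjoint angular supports --- no new analytical ingredient beyond the $L^p$ theory already developed is required.
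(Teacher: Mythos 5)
Your proposal reproduces the paper's argument essentially step for step: applying Theorem~\ref{trm Lr0 norm of Q+} at the endpoint triples $(2,2,\infty)$ for $\bone$ and $(\infty,1,\infty)$ for $\btwo$, tracking the singular factor $(1-s)^{-1}\lesssim\varepsilon_{ij}^{-2}$ on the support of $\bone$, bounding the factor $1-2r_{ij}(1-r_{ij})(1-s)$ away from zero, and closing with dominated convergence for \eqref{c2 control}. The only deviations are cosmetic (you bound $1-2r_{ij}(1-r_{ij})(1-s)\geq 1/2$ uniformly where the paper keeps the explicit $r_{ij}$-dependence, and you explicitly note the $I$ factor from Cauchy--Schwarz that the paper absorbs silently), neither of which changes the asymptotics claimed in the lemma.
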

\begin{proof}
	For the first estimate, we apply the result of Theorem \ref{trm Lr0 norm of Q+} for the case $(p,q,r)=(2,2,\infty)$, which yields the desired estimate with a constant 
	\begin{multline*}
\CopQinf  = \max_{1 \leq i,j \leq I} \Mij \left|S^{N-2}\right|  \left(\int_{0}^1 \left(2(1-r_{ij})^2(1-s)\right)^{-\frac{N}{2}}  \bone(s) (1-s^2)^{\frac{N-3}{2}} \md s \right)^{1/2} \\ \times \left(\int_{0}^1 \left(r_{ij}(1-r_{ij})\left( \frac{1}{r_{ij}(1-r_{ij})} -2 +2s \right)\right)^{-\frac{N}{2}} \bone(s) (1-s^2)^{\frac{N-3}{2}} \md s \right)^{1/2}.
\end{multline*}
We can estimate this constant using the inequality $(1-s)^{-1}\leq 2 \varepsilon_{ij}^{-2}$ in the support of $\bone$, and estimate $\frac{1}{r_{ij}(1-r_{ij})} -2 +2s \geq \frac{1}{r_{ij}(1-r_{ij})} -2$ for $s\geq 0$,
	\begin{multline*}
\CopQinf  \leq  \max_{1 \leq i,j \leq I} \Mij  \left|S^{N-2}\right|  \  r_{ij}^{-\frac{N}{4}} \left(1-r_{ij}\right)^{-\frac{3N}{4}} \left( \frac{1}{r_{ij}(1-r_{ij})} -2 \right)^{-\frac{N}{4}} \\ \times \varepsilon_{ij}^{-\frac{N}{2}} \left(\int_{0}^1 b_{ij}(s) (1-s^2)^{\frac{n-3}{2}} \md s \right)
\\
=  \max_{1 \leq i,j \leq I} \Mij  r_{ij}^{-\frac{N}{4}} \left(1-r_{ij}\right)^{-\frac{3N}{4}} \left( \frac{1}{r_{ij}(1-r_{ij})} -2  \right)^{-\frac{N}{4}} \varepsilon_{ij}^{-\frac{N}{2}} \bbjed\\
:= C^{b_1}.
\end{multline*}

Next, for the second estimate we use the same Theorem \ref{trm Lr0 norm of Q+} for $(p,q,r)=(\infty,1,\infty)$. The constant then becomes
\begin{multline*}
\CopQinf =  \max_{1 \leq i,j \leq I} \Mij  |S^{N-2}|  \int_{0}^1 \left(r_{ij}(1-r_{ij})\left( \frac{1}{r_{ij}(1-r_{ij})} -2 +2s \right)\right)^{-\frac{N}{2}} \\
 \qquad \qquad \times \btwo(s) (1-s^2)^{\frac{N-3}{2}} \md s 
\\
\leq   \max_{1 \leq i,j \leq I} \Mij  \left(r_{ij}(1-r_{ij})\left( \frac{1}{r_{ij}(1-r_{ij})} -2  \right)\right)^{-\frac{N}{2}}  
 \|\btwo\|_{L^1(\mathbb{S}^{N-1}, \md \sigma)}\\
:=  C^{b_2} .
\end{multline*}
Note that $\bbdvaepsjed \rightarrow 0$, as $\varepsilon_{ij} \rightarrow0$ by the dominate convergence theorem. Then, inequalities \eqref{split for Q+} hold.
\end{proof}

\begin{theorem}\label{propagation of l inf norm}
Let $\gamma_{ij} \in (0,1]$, $b_{ij} \in L^1(\mathbb{S}^{N-1})$, and an initial data satisfying the hypothesis of Theorem 2.6 in \cite{IG-P-C} and such that
\begin{equation*}
\| \F_0 \|_{\linfnula} = \mathcal{C}_0,
\end{equation*} for some positive constant $\mathcal{C}_0$. Then there exists a constant $\mathcal{C}(\F_0)$ depending on $\bar{\gamma}, \, m_i, \, b_{ij}$ such that
\begin{equation}\label{ineq l inf norm}
\| \F(t, \cdot) \|_{\linfnula} \leq \mathcal{C}(\F_0), \quad t\geq 0,
\end{equation}
for $\F$ the solution of the Boltzmann system, with $\bar{\gamma}$ defined as in \eqref{gamma bar}.
\end{theorem}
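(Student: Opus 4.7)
The plan is to adapt the $L^\infty$-propagation strategy of \cite{IG-Alonso-Taskovic} to the vector-valued system, combining the angular splitting prepared in Lemma \ref{lemma Q+ with separated b} with the lower bound on the loss collision frequency. Writing the $i$-th component equation in the (Grad cut-off) mild form
\[
f_i(t,v) = e^{-\int_0^t \nu_i[\F](s,v)\,\mathrm{d}s}\, f_{i,0}(v) + \int_0^t e^{-\int_s^t \nu_i[\F](\tau,v)\,\mathrm{d}\tau}\, [\mathbb{Q}^+(\F,\F)]_i(s,v)\,\mathrm{d}s,
\]
and exploiting the uniform-in-time lower bound $\nu_i[\F](s,v) \ge c_{lb}'\langle v\rangle_i^{\bar\gamma}$ from \eqref{lower bound from ExiUni} (preserved because $\F(s)\in\Omega$ along the flow by Theorem 2.6 of \cite{IG-P-C}) furnishes the decay factor needed to beat the gain.

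For the gain I split $b_{ij} = b^{\varepsilon_{ij},1}_{ij}+b^{\varepsilon_{ij},2}_{ij}$ as in \eqref{b splitting bis} and absorb the hard-potential factor $|u|^{\gamma_{ij}}$ by conservation of kinetic energy (using $|u|=|u'|$ together with the pointwise bound $|u|^{\gamma_{ij}} \le C(\langle v'\rangle_i^{\bar\gamma}+\langle v'_*\rangle_j^{\bar\gamma})$), reducing each $Q^{+}_{\gamma_{ij},ij,q}$ to $Q^+_{0,ij,q}$ acting on $\bar\gamma$-weighted inputs. Applying the two asymmetric bounds \eqref{split for Q+} and placing the weights on the integrable factors yields
\[
\|\mathbb{Q}^+(\F,\F)\|_{\linfnula} \le C^{b_1}\,\|\F\|_{L^2_{\bar\gamma}}^2 + C^{b_2}\,\|\F\|_{\linfnula}\,\|\F\|_{L^1_{\bar\gamma}}.
\]
The $L^1_{\bar\gamma}$-norm is uniformly propagated by \cite{IG-P-C}; the $L^2_{\bar\gamma}$-norm is finite at $t=0$ by interpolation between the assumed $\|\F_0\|_{\linfnula}$ and the $L^1_k$ moments in the hypothesis of Theorem 2.6 of \cite{IG-P-C}, and then uniformly propagated by Theorem \ref{Gen and prop Lp norms} with $p=2$; and $C^{b_2}$ can be made arbitrarily small via \eqref{c2 control} by choosing $\varepsilon_{ij}$ small.

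Substituting into the mild formulation and denoting $M(t):=\|\F(t)\|_{\linfnula}$, I obtain
\[
M(t) \le e^{-c_{lb}'t}\,\mathcal{C}_0 + \int_0^t e^{-c_{lb}'(t-s)}\bigl(K + \xi\, C_1\,M(s)\bigr)\,\mathrm{d}s,
\]
with $K$ and $C_1$ bounded in time and $\xi$ a free small parameter inherited from $C^{b_2}$. Tuning $\varepsilon_{ij}$ so that $\xi C_1 < c_{lb}'$, a linear Gronwall argument (equivalently, a barrier argument on the pointwise equation) yields $M(t)\le \mathcal{C}(\F_0):=\max\{\mathcal{C}_0,\,K/(c_{lb}'-\xi C_1)\}$ for all $t\ge 0$, which is \eqref{ineq l inf norm}. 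The main obstacle is exactly the placement of the $\bar\gamma$-weight coming from the hard-potential absorption onto the $L^1_{\bar\gamma}$ and $L^2_{\bar\gamma}$ factors rather than the $L^\infty_0$ factor (the hypotheses do not control a weighted $L^\infty$-norm); this is what makes the asymmetric structure of Lemma \ref{lemma Q+ with separated b}, with one quadratic $L^2$ bound and one $L^\infty\times L^1$ bound with arbitrarily small constant, the precise tool required to close the estimate.
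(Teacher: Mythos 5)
Your overall architecture — Duhamel/integrating-factor form, lower bound on the collision frequency, angular splitting $b_{ij}=b_{ij}^{\varepsilon_{ij},1}+b_{ij}^{\varepsilon_{ij},2}$, the asymmetric pair of bounds from Lemma \ref{lemma Q+ with separated b}, and the final absorption — matches the paper. But there is a genuine gap in the step where you absorb the hard-potential factor. The symmetric pointwise bound you propose, $|u|^{\gamma_{ij}}\le C\bigl(\langle v'\rangle_i^{\bar\gamma}+\langle v'_*\rangle_j^{\bar\gamma}\bigr)$, does not allow you to "place the weights on the integrable factors." The term $\langle v'\rangle_i^{\bar\gamma}$ necessarily attaches to the \emph{first} argument of $Q^+_{ij}$ (evaluated at $v'$), so the $b_{ij}^{\varepsilon_{ij},2}$ piece would require $\|f_i\langle\cdot\rangle_i^{\bar\gamma}\|_{\infty}$, a weighted $L^\infty$-norm that is not controlled by the hypotheses. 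You cannot dodge this by applying the $L^1\times L^\infty\to L^\infty$ convolution bound with the roles of the two slots interchanged: that corresponds to the constant built from $\bigl((1-r_{ij})^2(2-2s)\bigr)^{-N/2}$ (rather than $\bigl(r_{ij}(1-r_{ij})(\tfrac{1}{r_{ij}(1-r_{ij})}-2+2s)\bigr)^{-N/2}$), and on the support of $b_{ij}^{\varepsilon_{ij},2}$, which is concentrated near $s=1$, that factor behaves like $(1-s)^{-N/2}$; the resulting integral $\int(1-s)^{-3/2}\,\mathrm{d}\xi_N^{b_{ij}}(s)$ does not vanish (and may even diverge) as $\varepsilon_{ij}\to 0$. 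So the smallness of $C^{b_2}$ — which is what closes the Gronwall step — is lost.

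The paper avoids this by using the genuinely asymmetric pointwise estimate \eqref{control relative velocity}, namely $|u|\le 2|v-v'_*|$ (valid on the support of the symmetrized $b_{ij}$), which gives $|u|^{\gamma_{ij}}\le C\,\japvi^{\gamma_{ij}}\langle v'_*\rangle_j^{\gamma_{ij}}$. This places the $\langle v'_*\rangle_j^{\gamma_{ij}}$ weight only on the \emph{second} argument of $Q^+_{ij}$ (so that the $b^2$ piece yields $\|f_i\|_\infty\,\|f_j\langle\cdot\rangle_j^{\bar\gamma}\|_1$, with the $L^\infty$ factor unweighted) and factors $\japvi^{\gamma_{ij}}$ out of the collision integral, which exactly cancels against the loss frequency $\langle v\rangle_i^{\bar\gamma}$ (no cancellation is needed for the decay estimate, but the extraction of $\japvi^{\gamma_{ij}}$ is what prevents any weight from landing on $v'$). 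Substituting this bound for yours, the rest of your argument — interpolation to control $\|\F_0\|_{L^2_{\bar\gamma}}$, propagation of $L^1_k$ and $L^2_k$ norms, choice of $\varepsilon_{ij}$ so that the $C^{b_2}$ term is sub-critical, and the linear Gronwall/absorption step — goes through essentially as in the paper.
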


This proof follows a different approach from the case $1 < p < \infty$, since now we need to focus on point-wise estimates on the collisional integral rather than in computing $L^p$ norms by duality.

\begin{proof}
Component-wise, we can apply inequality \eqref{control relative velocity} to the gain part of the collisional operator,
\begin{multline*}
 \sum_{j=1}^I Q^{+}_{ij} (f_i, f_j) (v)  \leq  \sum_{j=1}^I 2 \frac{\sum_{k=1}^I m_k}{ \sqrt{m_i} \sqrt{m_j}} Q^{+}_{0,ij} (f_i, f_j \langle \cdot \rangle_j^{\gamma_{ij}}) (v)  \japvi^{\gamma_{ij}}\\
 \leq  2  \frac{\sum_{k=1}^I m_k}{ \min_{1\leq i \leq I} \sqrt{m_i} \min_{1\leq j \leq I}{\sqrt{m_j}}}   \sum_{j=1}^I Q^{+}_{0,ij} (f_i, f_j \langle \cdot \rangle_j^{\bar{\gamma}}) (v)  \japvi^{\bar{\gamma}}  
\end{multline*}
For the loss term, from Lemma \ref{lemma lower bound}
\begin{multline*}
 \sum_{j=1}^I Q^-_{ij} (f_i, f_j) (v) = f_i(v) \sum_{j=1}^I \|b_{ij}\|_{L^1(\mathbb{S}^{N-1})}   \int_{\mathbb{R}^N}  f_j(v_*) |v-v_{*}|^{\gamma_{ij}}  \md \sigma \md v_*\\
 \geq  f_i(v)  \min_{1\leq i, j \leq I} \|b_{ij}\|_{L^1(\mathbb{S}^{N-1})} \sum_{j=1}^I    \int_{\mathbb{R}^N}  f_j(v_*) |v-v_{*}|^{\gamma_{ij}}  \md v_*\\
 \geq  \min_{1\leq i, j \leq I} \|b_{ij}\|_{L^1(\mathbb{S}^{N-1})} \frac{1}{\max_{1\leq j \leq I}m_j} c_{lb} f_i(v)  \japvi^{\bar{\gamma}}\\
\end{multline*}
where $c_{lb}$ is given by \eqref{clb}.

Denoting
\begin{equation*}
C_G := 2  \frac{\sum_{k=1}^I m_k}{ \min_{1\leq i \leq I} \sqrt{m_i} \min_{1\leq j \leq I}{\sqrt{m_j}}} \ \ \text{and} \ \ 
C_L :=   \frac{\min_{1\leq i, j \leq I} \|b_{ij}\|_{L^1(\mathbb{S}^{N-1})} }{\max_{1\leq j \leq I}m_j}.
\end{equation*}
Then, we obtain the following control
\begin{equation*}
\partial_t f_i = \sum_{j =1}^I Q_{ij}(f_i,f_j)(v) \leq C_G \sum_{j=1}^I Q^{+}_{0, ij} (f_i, f_j \langle \cdot \rangle_j^{\bar{\gamma}}) (v)  \japvi^{\bar{\gamma}} - C_L c_{lb} f_i(v) \japvi^{\bar{\gamma}}, 
\end{equation*}
or equivalently,
\begin{equation*}
\partial_t f_i + C_L c_{lb} f_i(v) \japvi^{\bar{\gamma}} \leq C_G \sum_{j=1}^I Q^{+}_{0, ij} (f_i, f_j \langle \cdot \rangle_j^{\bar{\gamma}}) (v)  \japvi^{\bar{\gamma}}. 
\end{equation*}
Now, we can split the angular function as in \eqref{b splitting bis} and apply Lemma \ref{lemma Q+ with separated b} component wise, as showed in \eqref{inf component wise}
\begin{multline*}
\partial_t \left( f_i(v) e^{C_L c_{lb} \japvi^{\bar{\gamma}}t} \right) \leq e^{C_L c_{lb} \japvi^{\bar{\gamma}}t} C_G \left( \sum_{j=1}^I  Q^{+}_{0,ij} (f_i, f_j \langle \cdot \rangle_j^{\bar{\gamma}}) (v)  \right) \japvi^{\bar{\gamma}} \\
\leq e^{C_L c_{lb} \japvi^{\bar{\gamma}}t} C_G \left( \sum_{j=1}^I  \|Q^{+}_{0,ij} (f_i, f_j \langle \cdot \rangle_j^{\bar{\gamma}})\|_{\infty}  \right) \japvi^{\bar{\gamma}}\\
\leq e^{C_L c_{lb} \japvi^{\bar{\gamma}}t} C_G  \sum_{j=1}^I \left(  \|Q^{+}_{0, ij,\bone} (f_i, f_j \langle \cdot \rangle_j^{\bar{\gamma}})\|_{\infty} + \|Q^{+}_{0, ij,\btwo} (f_i, f_j \langle \cdot \rangle_j^{\bar{\gamma}})\|_{\infty}  \right) \japvi^{\bar{\gamma}}\\
\leq e^{C_L c_{lb} \japvi^{\bar{\gamma}}t} C_G   \left( C^{b_1} \sum_{j=1}^I \|f_i\|_2 \|f_j \langle \cdot \rangle_j^{\bar{\gamma}}\|_2 +  C^{b_2} \sum_{j=1}^I \|f_i\|_\infty \|f_j \langle \cdot \rangle_j^{\bar{\gamma}}\|_1  \right) \japvi^{\bar{\gamma}}\\
\leq e^{C_L c_{lb} \japvi^{\bar{\gamma}}t} C_G   \left(  C^{b_1} \|f_i\|_2 \sum_{j=1}^I  \|f_j \langle \cdot \rangle_j^{\bar{\gamma}}\|_2 +  C^{b_2}\|f_i\|_\infty \sum_{j=1}^I  \|f_j \langle \cdot \rangle_j^{\bar{\gamma}}\|_1  \right) \japvi^{\bar{\gamma}}\\
\leq e^{C_L c_{lb} \japvi^{\bar{\gamma}}t} C_G   \left( C^{b_1} \|f_i\|_2  \| \F \|_{L^2_{\bar{\gamma}}} +   C^{b_2}\|f_i\|_\infty  \| \F \|_{L^1_{\bar{\gamma}}} \right) \japvi^{\bar{\gamma}}.
\end{multline*}
Now, observe that from \eqref{k star}, $\| \F \|_{L^1_{\bar{\gamma}}}  \leq \| \F \|_{L^1_{k_*}}$ and since $\F_0 \in \Omega$ then by theorem 2.6 in \cite{IG-P-C}, $ \| \F \|_{L^1_{\bar{\gamma}}} \leq C(\F_0)$, with $C(\F_0)$ as given in that theorem. 
In the same way, $\| \F \|_{L^2_{\bar{\gamma}}} \leq \| \F \|_{L^2_{k_*}}$ and by theorem \ref{Gen and prop Lp norms} there is propagation of $L^2$ norms, so $ \| \F \|_{L^2_{\bar{\gamma}}} \leq \tilde{C}(\F_0)$, with $\tilde{C}(\F_0)$ as given in \eqref{propagation of Lp} .

Moreover, we can choose $\max_{1 \leq i, j \leq I} \epsilon_{ij}$ small enough such that, choosing $\xi =   \frac{c_{lb}C_L}{4 C(\F_0) C_GI}$ in \eqref{c2 control},
\begin{equation*}
  C^{b_2} \leq \frac{c_{lb}C_L}{4 C(\F_0) C_GI}.
\end{equation*}
Then, it follows that
\begin{equation*}
\partial_t \left( f_i(v) e^{C_L c_{lb} \japvi^{\bar{\gamma}}t} \right) \leq e^{C_L c_{lb} \japvi^{\bar{\gamma}}t}  \left(C_G  C^{b_1}  \tilde{C}(\F_0) \| f_i \|_2 + \frac{c_{lb}C_L}{4I} \| f_i \|_{\infty} \right) \japvi^{\bar{\gamma}}.
\end{equation*}
Now, doing integration over $t$
\begin{multline*}
f_i(v) \leq f_i(0,v) e^{-C_L c_{lb} \japvi^{\bar{\gamma}}t} + \\
\ \ \ \int_{0}^t e^{-C_L c_{lb} \japvi^{\bar{\gamma}}(t-s)}   \left(C_G  C^{b_1} \tilde{C}(\F_0) \|f_i\|_2  +  \frac{c_{lb}C_L}{4I } \|f_i\|_\infty  \right) \md s \japvi^{\bar{\gamma}}\\
\leq \|f_i(0, \cdot) \|_\infty +\frac{ C_GC^{b_1} \tilde{C}(\F_0)}{ c_{lb}C_L} \sup_{0 \leq s\leq t} \|f_i(s, \cdot) \|_2 + \frac{1}{4 I} \sup_{0 \leq s \leq t} \|f_i(s, \cdot) \|_{\infty}.
\end{multline*}
Finally, since the right hand side remains the same, we take the supremum over all $v \in \mathbb{R}^N$ and sum over $1 \leq i \leq I$, to obtain
\begin{multline*}
\| \F(t, \cdot) \|_{L^\infty_0} \leq \| \F_0 \|_{L^\infty_0} +  \frac{ C_GC^{b_1} \tilde{C}(\F_0)}{ c_{lb}C_L} \sum_{i=1}^I \sup_{0 \leq s\leq t} \|f_i(s, \cdot) \|_2 + \frac{1}{4 I} \sum_{i=1}^I \sup_{0 \leq s \leq t} \|f_i(s, \cdot) \|_{\infty}   \\
\leq \| \F_0 \|_{L^\infty_0} + \frac{C_G C^{b_1} \tilde{C}(\F_0)}{ c_{lb}C_L} \sum_{i=1}^I \sup_{0 \leq s\leq t} \|f_i(s, \cdot) \|_2 + \frac{1}{4 I} \sum_{i=1}^I \sup_{0 \leq s \leq t} \|\F(s, \cdot) \|_{\linfnula}  \\
\leq \| \F_0 \|_{L^\infty_0} + \frac{C_G C^{b_1} \tilde{C}(\F_0)}{ c_{lb}C_L} \sum_{i=1}^I \sup_{0 \leq s\leq t} \left(\|f_i(s, \cdot) \|_2^2\right)^{\frac{1}{2}} + \frac{1}{4 } \sup_{0 \leq s \leq t} \|\F(s, \cdot) \|_{\linfnula} \\
\leq \| \F_0 \|_{L^\infty_0} +  \frac{C_G C^{b_1} \tilde{C}(\F_0)}{ c_{lb}C_L} \sum_{i=1}^I \sup_{0 \leq s\leq t} \|\F(s, \cdot) \|_{L^2_0} + \frac{1}{4 } \sup_{0 \leq s \leq t} \|\F(s, \cdot) \|_{\linfnula}   \\
\leq \| \F_0 \|_{L^\infty_0} +\frac{C_G C^{b_1} \tilde{C}(\F_0)I}{ c_{lb}C_L} \sup_{0 \leq s\leq t} \|\F(s, \cdot) \|_{L^2_0} + \frac{1}{4 } \sup_{0 \leq s \leq t} \|\F(s, \cdot) \|_{\linfnula}  \\
\end{multline*}
Again, by the propagation of polynomially weighted $L^p$ norms \eqref{propagation of Lp}, 
\begin{equation*}
 \|\F(s, \cdot) \|_{L^2_0} \leq \tilde{\tilde{C}}(\mathbb{F}_0),
\end{equation*}
and taking the supremum over $t \in (0, T]$, estimate \eqref{ineq l inf norm} follows, and the proof of Theorem \ref{propagation of l inf norm} is complete.
\end{proof}

\textit{Proof of theorem \ref{propagation of l inf norm pol weight}}
From the conservation law of kinetic energy \eqref{CL micro},
\begin{equation*}
\japvi \leq \japvip \japvjps.
\end{equation*}
Then, it holds
\begin{equation*}
\partial_t \tilde{f}_i \leq \sum_{j =1}^I Q_{ij}(\tilde{f}_i,\tilde{f}_j)(v),
\end{equation*}
where 
$$
\tilde{f}_i(v) =  f_i(v) \japvi^k, \quad i=1,\dots,I.
$$
 Note that Lemma \ref{lemma Q+ with separated b} is also valid for $\tilde{\F}$ and $\tilde{\G}$ then we can obtain 
\begin{equation*}
\partial_t \tilde{f}_i \leq \sum_{j =1}^I Q_{ij}(\tilde{f}_i,\tilde{f}_j)(v) \leq C_G \sum_{j=1}^I Q^{+}_{0, ij} (\tilde{f}_i, \tilde{f}_j \langle \cdot \rangle_j^{\bar{\gamma}}) (v)  \japvi^{\bar{\gamma}} - C_L c_{lb} \tilde{f}_i(v) \japvi^{\bar{\gamma}}, 
\end{equation*}
as in the proof of theorem \ref{propagation of l inf norm}. So, it is just a matter of repeating the proof for $\tilde{f}_i$. In fact, we can obtain that 
\begin{multline*}
\partial_t \left(\tilde{f}_i(v) e^{C_L c_{lb} \japvi^{\bar{\gamma}}t} \right) 
\leq e^{C_L c_{lb} \japvi^{\bar{\gamma}}t} C_G   \left( C^{b_1} \|\tilde{f}_i\|_2  \| \tilde{\F} \|_{L^2_{\bar{\gamma}}} +   C^{b_2}\|\tilde{f}_i\|_\infty  \| \tilde{\F} \|_{L^1_{\bar{\gamma}}} \right) \japvi^{\bar{\gamma}}.
\end{multline*}
Now,  $\| \tilde{\F} \|_{L^1_{\bar{\gamma}}} = \| \F \|_{L^1_{\bar{\gamma}+k}} $ and since $\F_0 \in \Omega$ then by theorem 2.6 in \cite{IG-P-C}, we can bound $ \| \F \|_{L^1_{\bar{\gamma}+k}} \leq C(\F_0)$, with $C(\F_0)$ as given in that theorem for $k> k^*$. 
In the same way, $\| \tilde{\F} \|_{L^2_{\bar{\gamma}}}= \| \F \|_{L^2_{\bar{\gamma}+k}}$ and by theorem \ref{Gen and prop Lp norms} there is propagation of weighted $L^2$ norms, so $ \| \F \|_{L^2_{\bar{\gamma}+k}} \leq \tilde{C}(\F_0)$, with $\tilde{C}(\F_0)$ as given in \eqref{propagation of Lp} for $k>k^*$. 
Then we can redo the computations shown before to obtain
\begin{multline*}
\| \tilde{\F}(t, \cdot) \|_{L^\infty_0} 
\leq \| \tilde{\F}_0 \|_{L^\infty_0} +\frac{C_G C^{b_1} \tilde{C}(\F_0)I}{ c_{lb}C_L} \sup_{0 \leq s\leq t} \|\tilde{\F}(s, \cdot) \|_{L^2_0} + \frac{1}{4 } \sup_{0 \leq s \leq t} \|\tilde{\F}(s, \cdot) \|_{\linfnula}.  \\
\end{multline*}
Again, by the propagation of polynomially weighted $L^p$ norms \eqref{propagation of Lp}, 
\begin{equation*}
 \|\tilde{\F}(s, \cdot) \|_{L^2_0} \leq \tilde{\tilde{C}}(\mathbb{F}_0),
\end{equation*}
and taking the supremum over $t \in (0, T]$, estimate \eqref{ineq l inf norm pol weight} follows, and the proof of Theorem \ref{propagation of l inf norm} is complete.
\qed

\textit{Proof of theorem \ref{thm prop exp weight inf norm}}
 Recall from the proof of Theorem \ref{propagation of l inf norm}, 
\begin{equation}\label{relation Q+ and Q0 comp wise}
 \sum_{j=1}^I Q^{+}_{ij} (f_i, f_j) (v) 
 \leq  2  \frac{\sum_{k=1}^I m_k}{ \min_{1\leq i \leq I} \sqrt{m_i} \min_{1\leq j \leq I}{\sqrt{m_j}}}   \sum_{j=1}^I Q^{+}_{0,ij} (f_i, f_j \langle \cdot \rangle_j^{\bar{\gamma}}) (v)  \japvi^{\bar{\gamma}} , 
\end{equation}
then,  from \eqref{estimate for dt g}, yields the following estimate
\begin{multline*}
\partial_t g_i (v) \leq \left( 2  \frac{\sum_{k=1}^I m_k}{ \min_{1\leq i \leq I} \sqrt{m_i} \min_{1\leq j \leq I}{\sqrt{m_j}}}   \sum_{j=1}^I Q^{+}_{0,ij} (g_i, g_j \langle \cdot \rangle_j^{\bar{\gamma}}) (v) \right. \\ \left. - c_{lb} \frac{min_{1\leq i,j \leq I} \|b_{ij}\|_{L^1(\mathbb{S}^{n_1})}}{\max_{1 \leq  j \leq I} m_j} g_i(v) \right) \langle v\rangle_{i}^{\bar{\gamma}},
\end{multline*}
for $g_i(\cdot)=f_i(\cdot) e^{\alpha\langle \cdot \rangle_i^s}$. Then, by splitting the angular function as in \eqref{b splitting bis}, we can repeat the same argument as in the proof of Theorem \ref{propagation of l inf norm}, to conclude that
\begin{equation*}
\| \mathbb{G}(t, \cdot) \|_{\linfnula} \leq \mathcal{C }(\mathbb{G}_0)
\end{equation*}
Note that we have the boundedness of $\| \G \|_{L^1_{\bar{\gamma}}}$, $\| \G \|_{L^2_{\bar{\gamma}}}\|$ and $\| \G \|_{L^2_0}$ by the generation and propagation of exponentially weighted moments and $L^p$ norms.\\
%
%
%
%
\qed

\section{Acknowledgements.} 
The authors would like to thank Professor Ricardo J. Alonso for fruitful discussions on the topic.  The work of the first two authors has been partially supported by NSF grants DMS 1715515 and  RNMS (Ki-Net) DMS-1107444.  Milana Pavi\'c-\v Coli\'c acknowledges the support of Ministry of Education, Science and Technological Development, Republic of Serbia within the Project No. ON174016.   This work was initiated while Milana Pavi\'c-\v Coli\'c was a Fulbright Scholar from the Univerisity of Novi Sad, Serbia, visiting the Oden Institute of Computational Engineering and Sciences (Oden) at the University of Texas Austin co funded by a JTO Fellowship.  Oden support is also gratefully acknowledged.

\appendix

\section{Proof of Theorem \ref{Th Carleman} (Carleman representation of the gain operator)} \label{Appendix proof Carleman}
\begin{proof}
	The proof follows \cite{GambaPanfVil09} and \cite{AlonsoGamba18}. Indeed, using the fact that
	\begin{equation*}
	\int_{\mathbb{S}^{N-1}} F(\left|u\right| \sigma - u) \mathrm{d}\sigma = \frac{2}{\left|u\right|^{N-2}} \int_{ y \in \mathbb{R}^N} \delta(2 y \cdot u + \left| y \right|^2) F(y) \mathrm{d}y,
	\end{equation*}
	where $\delta$ is usual Dirac delta function, one has
	\begin{multline*}
	Q^{ +}_{ij}(f,g)(v) = \int_{v_* \in \mathbb{R}^N}  \frac{2}{\left|u\right|^{N-2}}  \int_{ y \in \mathbb{R}^N} \delta(2 y \cdot u + \left| y \right|^2) \ f(v+(1-r_{ij})y) \ g(v_*-r_{ij} y) \\ \times \mathcal{B}_{ij}\left(\left|u\right|,1-\frac{\left|y\right|^2}{2\left|u\right|^2}\right) \mathrm{d}y \, \mathrm{d}v_*,
	\end{multline*}
	where $u=v-v_*$, and $\left|u\right|=\left|v-v_*\right|= \left|v-v_* + y\right|$.
	We first perform the change of variable $y\mapsto v'=v+(1-r_{ij})y$, with the Jacobian $(1-r_{ij})^N$, and then for $v'$ fixed $v_* \mapsto v'_* = v_* - \frac{r_{ij}}{(1-r_{ij})} (v'-v)$ with the unit Jacobian. After some calculations, this gives
	\begin{multline*}
	Q^{ +}_{ij}(f,g)(v) = 2 (1-r_{ij})^{-N}\int_{v' \in \mathbb{R}^N}  \int_{ v'_* \in \mathbb{R}^N}  \left|u\right|^{-N+2} \ f(v') \ g(v'_*) \\ \times 
	\delta\left(\frac{2}{1-r_{ij}}(v'-v)  \cdot u + \frac{\left| v'-v \right|^2}{(1-r_{ij})^2}\right) 
	\\ \times \mathcal{B}_{ij}\left(\left|u\right|,1-\frac{\left|v'-v\right|^2}{2(1-t)^2\left|u\right|^2}\right) \mathrm{d}v'_* \, \mathrm{d}v',
	\end{multline*}
	where $u=\frac{1}{1-r_{ij}}v - \frac{r_{ij}}{1-r_{ij}} v' -v'_*$, and $\left|u\right|=\left|v'-v'_*\right|$.  With this notation, we can precise the argument of delta function,
	$$
	\frac{2}{1-r_{ij}}(v'-v)  \cdot u + \frac{\left| v'-v \right|^2}{(1-r_{ij})^2} =
	\frac{2}{1-r_{ij}}(v-v')  \cdot (v'_*-v) + \frac{1-2 r_{ij}}{(1-r_{ij})^2} \left| v'-v \right|^2 .
	$$
	This leads us to perform the change of variables 
	$
	v'_* \mapsto \bar{v}= v'_*-v, 
	$
	with unit Jacobian, which implies
	\begin{multline}\label{pomocna 2}
	Q^{ +}_{ij}(f,g)(v) = 2 (1-r_{ij})^{-N}\int_{v' \in \mathbb{R}^N}  \int_{ \bar{v} \in \mathbb{R}^N}  \left|v'-\bar{v}-v\right|^{-N+2} \ f(v') \ g(\bar{v}+v) \\ \times 
	\delta\left(	\frac{2}{1-r_{ij}}(v-v')  \cdot \bar{v} + \frac{1-2 r_{ij}}{(1-r_{ij})^2} \left| v'-v \right|^2 \right) 
	\\ \times \mathcal{B}_{ij}\left(\left|v'-\bar{v}-v\right|,1-\frac{\left|v'-v\right|^2}{2(1-t)^2\left|v'-\bar{v}-v\right|^2}\right) \mathrm{d}\bar{v} \, \mathrm{d}v'.
	\end{multline}
	Now, for fixed $v'$ and $v$, we decompose the vector $\bar{v}$ into the component parallel  to $v-v'$  and orthogonal to it. Let 
	$$n=\frac{v-v'}{\left|v-v'\right|}.$$ 
	Denote with $z^{\parallel}$  the component of $\bar{v}$ in direction of $n$, i.e. let $z^{\parallel} := z \cdot n $. Then its orthogonal vector $z^{\perp}$ belong to the hyperplane $\E$, by definition  \eqref{hyperplane E}. Thus, one may write 
	$$\bar{v}=z^{\perp} + z^{\parallel}n, \quad z^{\parallel} = z \cdot n, \ \ z^{\perp} \in \E,$$  from which it follows.\\
	Moreover, in \eqref{pomocna 2}, we change the variables $\bar{v}\mapsto z^{\perp} + z^{\parallel}n$ with unit Jacobian, and integration is performed via integration with respect to its components $z^{\perp} \in \E$ and $z^{\parallel} \in \mathbb{R}$. This change of variables simplifies the argument of delta function. Indeed,
	\begin{equation*}
	\frac{2}{1-r_{ij}}(v-v')  \cdot \bar{v} + \frac{1-2 r_{ij}}{(1-r_{ij})^2} \left| v'-v \right|^2
	= \frac{2 \left|v'-v\right|}{1-r_{ij}} \left( \frac{1-2r_{ij}}{2(1-r_{ij})} \left|v'-v\right| +{z}^\parallel \right),
	\end{equation*}
	which yields the following representation
	\begin{multline*}
	Q^{ +}_{ij}(f,g)(v) = 2 (1-r_{ij})^{-N}\int_{v' \in \mathbb{R}^N} f(v')  \int_{ z^\perp \in \E} \int_{{z}^\parallel \in \mathbb{R}}  \left|v'-v-z^\perp-z^\parallel n \right|^{-N+2}  \\ \times g\left(v+ z^\perp +z^\parallel n\right) \delta\left(-\frac{2 \left|v'-v\right|}{1-r_{ij}} \left( \frac{2r_{ij}-1}{2(1-r_{ij})} \left|v'-v\right| -{z}^\parallel \right)\right) \\ \times \mathcal{B}_{ij}\left(\left|v'-v-z^\perp-z^\parallel n \right|,1-\frac{\left|v'-v\right|^2}{2(1-r_{ij})^2\left|v'-v-z^\perp-z^\parallel n \right|^2}\right) \mathrm{d}{z}^\parallel\, \mathrm{d}z^\perp \, \mathrm{d}v'.
	\end{multline*}
	Then, by the fact
	\begin{equation*}
	\int_{ y \in \mathbb{R}} \delta(a(b-y)) F(y) \mathrm{d}y= \left|a\right|^{-1} F(b),
	\end{equation*}
	one obtains
	\begin{multline*}
	Q^{+}_{ij}(f,g)(v) = (1-r_{ij})^{-N+1}     \int_{v' \in \mathbb{R}^N} \frac{f(v')}{ \left|v'-v\right|}  \int_{ z^\perp\in\E}   g\left(z^\perp+\p\right) \\ \times {\left|\frac{(v-v')}{2(1-r_{ij})}+z^\perp\right|^{2-N}} \\ \times \mathcal{B}_{ij}\left(\left|\frac{(v-v')}{2(1-r_{ij})}+z^\perp\right|,1-\frac{\left|v'-v\right|^2}{2(1-r_{ij})^2\left|\frac{(v-v')}{2(1-r_{ij})}+z^\perp\right|^2}\right) \mathrm{d}z^\perp \, \mathrm{d}v'.
	\end{multline*}
	It remains to rename the variables.
\end{proof}

\section{Existence and Uniqueness} \label{existence and uniqueness}
For a closure to this work, we will transcript the theorem from \cite{IG-P-C} and the idea behind the proof.

\medskip

\begin{theorem}[Existence and Uniqueness]\label{theorem existence uniqueness} Assume that $\F(0,v)=\F_0(v) \in \Omega$, where
 \begin{multline}\label{Omega}
\Omega = \Big\{ \F(t, \cdot) \in L^1_2: \F \geq 0 \ \text{in} \ v,  \sum_{i=1}^I \int_{\mathbb{R}^N} m_i v \, f_i(t,v)  \mathrm{d}v=0,  
\\
\exists \,  c_0, C_0, c_2 ,  C_2, C_{2+\varepsilon} >0, \text{and} \ C_0 < c_2, \text{ such that} \ \forall t\geq0, 
\\
c_0 \leq \mathfrak{m}_0 F (t) \leq C_0,  \ c_2 \leq \mathfrak{m}_2 F (t) \leq C_2,   
\\
\mathfrak{m}_{2+\varepsilon} F (t) \leq C_{2+\varepsilon}, \ \text{for } \ \varepsilon>0,  
\\
\mathfrak{m}_{k_*}F (t) \leq C_{k_*}, \ \text{with } k_* \text{ as  in \eqref{k star}, and} \ C_{k_*} \ \text{as in} \text{ \eqref{c k star}} \  \Big\} 
\end{multline}
where
\begin{equation*}
\mathfrak{m}_\omega F(t) := \left\| \F \right\|_{L^1_\omega}= \sum_{i=1}^I \int_{\mathbb{R}^3} \left| f_i(t,v)  \right|\japvi^{\omega} \mathrm{d}v.
\end{equation*} 
  Then the Boltzmann system \eqref{Cauchy problem} for the cross section \eqref{cross section} has a unique solution in $\mathcal{C}\left(\left[0, \infty \right), \Omega \right) \cap \mathcal{C}^1\left(\left(0,\infty  \right), \ldva \right)$.
  \end{theorem}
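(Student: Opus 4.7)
The plan is to apply an abstract ODE theorem in a suitable Banach space to the multi-species collision operator $\mathbb{Q}$, viewed as a vector field on the invariant set $\Omega \subset L^1_2$. The natural framework is the one developed for the scalar Boltzmann equation (\`a la Bressan / Martin / Arkeryd), which applies whenever: (i) $\mathbb{Q}$ is H\"older/Lipschitz continuous on bounded subsets of the appropriate weighted $L^1$ space, (ii) one has a one-sided (sub-tangential) condition ensuring that the flow preserves the convex cone of nonnegative functions, and (iii) a priori moment estimates show that the flow actually stays inside $\Omega$ for all $t \geq 0$. The weight $\langle v\rangle_i = (1 + \tfrac{m_i}{\sum_k m_k}|v|^2)^{1/2}$ used throughout the excerpt is precisely the one needed so that the microscopic energy conservation \eqref{CL micro} becomes the identity $\langle v'\rangle_i^2 + \langle v'_*\rangle_j^2 = \langle v\rangle_i^2 + \langle v_*\rangle_j^2$, which is the single structural fact driving every subsequent estimate.

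First I would construct the solution by a truncation/approximation scheme: replace the cross section $\mathcal{B}_{ij}$ by $\mathcal{B}_{ij}\wedge n$ and restrict velocities to $|v|\leq R$, solve the resulting Lipschitz ODE in $L^1_2$ by Picard iteration, and obtain $\F^{(n,R)}$. For each approximate solution I would derive, componentwise and then in vector form, the $L^1_k$ moment ODE using the weak form \eqref{weak form Qij}: one gets a differential inequality
\begin{equation*}
\tfrac{d}{dt}\mathfrak{m}_k\F \leq -A_k \mathfrak{m}_{k+\bar\gamma}\F + B_k \sum_{\ell=1}^{\lceil k/2\rceil -1} \binom{k}{\ell}(\mathfrak{m}_{\ell+\bar\gamma}\F)(\mathfrak{m}_{k-\ell}\F),
\end{equation*}
which is a multi-species Povzner-type estimate. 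The crucial input is the angular averaging in the Povzner lemma adapted to the asymmetric masses $m_i,m_j$: the displacement $(r_{ij}-\tfrac12)u$ in Figure 1 must be handled, and one recovers dissipativity of the high moment provided $k > k^*$ (hence the definition of $k^*$ in \eqref{k star}, where $\bar{k}$ depends on the angular threshold appearing in the Povzner gain per binary pair). Using Jensen's inequality on $\mathfrak{m}_{k+\bar\gamma}\F \geq (\mathfrak{m}_k\F)^{1+\bar\gamma/k}/(\mathfrak{m}_0\F)^{\bar\gamma/k}$ closes the ODE and yields both generation and propagation of $\mathfrak{m}_{k_*}$, giving the uniform in time constant $C_{k_*}$ in \eqref{Omega}. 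Conservation of total mass, total momentum (chosen zero) and total energy, all of which pass through the weak form for $\psi(v) = 1$, $m_iv$, $m_i|v|^2$ respectively, provide the bounds on $\mathfrak{m}_0, \mathfrak{m}_2$.

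The second key ingredient is the coercivity furnished by the loss term: using $\mathfrak{m}_0\F \geq c_0 > 0$ (nondegeneracy) together with the uniform $\mathfrak{m}_{k_*}$ bound, an elementary computation (splitting $|v-w|^{\gamma_{ij}}$ on the region $\{|w| \leq R\langle v\rangle_j\}$ and its complement) yields the pointwise lower bound $\sum_i \int m_i f_i(t,w)|v-w|^{\gamma_{ij}}\,dw \geq c_{lb}\langle v\rangle_j^{\bar\gamma}$. This lower bound, when paired with the sublinear $L^1_k$-control of the gain term inherited from the weak form estimates of Section 4 (taking $\psi = \langle\cdot\rangle_i^k$), ensures that the $L^1_k$ norm does not blow up and that passage to the limit $n,R\to\infty$ preserves the bounds. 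Tightness via the uniform $\mathfrak{m}_{2+\varepsilon}$ bound gives compactness, and the stability of the weak formulation identifies the limit as a solution of \eqref{Cauchy problem}. Continuity in time $\F \in \mathcal{C}([0,\infty),\Omega)$ follows from the $L^1_2$ bound on $\partial_t\F = \mathbb{Q}(\F)$, and the $\mathcal{C}^1$-regularity on $(0,\infty)$ follows because $\mathbb{Q}:\Omega \to L^1_2$ is continuous.

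For uniqueness I would use the bilinearity of $\mathbb{Q}$. Given two solutions $\F,\mathbb{H}$ in $\Omega$, write
\begin{equation*}
\partial_t(\F-\mathbb{H}) = \mathbb{Q}(\F-\mathbb{H},\F) + \mathbb{Q}(\mathbb{H},\F-\mathbb{H}),
\end{equation*}
and by controlling $Q_{ij}^\pm(f-h,g)$ and $Q_{ij}^\pm(h,f-g)$ in $L^1_2$ using Theorem \ref{trm Lr0 norm of Q+} with $(p,q,r) = (1,1,1)$ and the uniform $\mathfrak{m}_{2+\bar\gamma}$ bound available on $\Omega$, one arrives at $\tfrac{d}{dt}\|\F-\mathbb{H}\|_{L^1_2} \leq C\|\F-\mathbb{H}\|_{L^1_2}$ and Gronwall concludes. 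The \textbf{main obstacle} is the multi-species Povzner estimate: because the mass ratio $r_{ij}$ differs from $\tfrac12$, the standard integration over $\sigma\in\mathbb{S}^{N-1}$ used for single-species no longer preserves the gain-vs-loss symmetry, and one must re-derive the angular gain constant $k^{ij}_*$ for each pair using the shifted post-collisional formulas \eqref{collision transformation}; this is precisely the quantity buried inside $\bar k = \max_{i,j} k^{ij}_*$.
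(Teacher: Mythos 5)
You correctly identify the Martin abstract ODE framework and the three structural conditions that drive the paper's (cited) proof, but you then confusingly abandon it for a truncation-plus-compactness construction: cut the cross section $\mathcal{B}_{ij}\wedge n$, restrict to $|v|\leq R$, run Picard iteration, and pass to the limit by tightness and stability. That is a legitimate alternate route to existence, but it is not the route the paper takes, and it makes your opening framing superfluous — once the three Martin conditions are verified on the invariant region $\Omega$, existence \emph{and} uniqueness follow at once and no separate approximation/compactness machinery is needed. You also misstate the third condition: Martin's theorem asks for a \emph{one-sided Lipschitz} estimate on $\mathbb{Q}$ over $\Omega$, not merely "a priori moment estimates showing the flow stays in $\Omega$"; invariance of $\Omega$ is the business of the sub-tangent condition (ii), and the Povzner/Jensen moment ODE and the map $\mathcal{L}_{\bar\gamma,k_*}$ enter precisely to fix the constant $C_{k_*}$ so that $\Omega$ is a closed, bounded, convex set left invariant by the flow.

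The real gap is in your uniqueness step. From $\partial_t(\F-\mathbb{H}) = \mathbb{Q}(\F-\mathbb{H},\F)+\mathbb{Q}(\mathbb{H},\F-\mathbb{H})$ you propose to bound each term in $L^1_2$ via Theorem \ref{trm Lr0 norm of Q+} with $(p,q,r)=(1,1,1)$ and conclude $\tfrac{d}{dt}\|\F-\mathbb{H}\|_{L^1_2}\leq C\|\F-\mathbb{H}\|_{L^1_2}$. That bound does not hold: the $(1,1,1)$-estimate with the $L^1_2$ weight added reads $\|\mathbb{Q}^+(\mathbb{U},\G)\|_{L^1_2}\lesssim \|\mathbb{U}\|_{L^1_{2+\lambda}}\|\G\|_{L^1_{2+\lambda}}$, and $\|\F-\mathbb{H}\|_{L^1_{2+\bar\gamma}}$ is not controlled linearly by $\|\F-\mathbb{H}\|_{L^1_2}$. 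Interpolation against the uniform $\mathfrak{m}_{k_*}$ bound on $\Omega$ only gives $\|\F-\mathbb{H}\|_{L^1_{2+\bar\gamma}}\leq \|\F-\mathbb{H}\|_{L^1_2}^{\theta}\,\|\F-\mathbb{H}\|_{L^1_{k_*}}^{1-\theta}$ with $\theta<1$, so the differential inequality $y'\leq C\,y^{\theta}$ does not yield uniqueness at $y(0)=0$. This moment loss is exactly why the paper's cited route demands a \emph{one-sided} Lipschitz estimate rather than a full Lipschitz estimate: the loss term $-\mathbb{Q}^-$ carries the dissipative weight $\sum_i\int m_i f_i(t,w)|v-w|^{\gamma_{ij}}\,\mathrm{d}w \geq c_{lb}\langle v\rangle_j^{\bar\gamma}$ (Lemma \ref{lemma lower bound}), and the good sign of that term absorbs the highest-order piece of the gain in the difference estimate. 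Your Gronwall sketch throws that cancellation away, so the argument does not close as written.
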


The proof is based in an abstract framework of ODE theory in Banach spaces, which can be found in \cite{Martin}.
In order to apply that theory, it is crucial to state the invariant region $\Omega \subset L^1_2$ in which the collision operator $\mathbb{Q}: \Omega \rightarrow L^1_2$ satisfies \emph{(i) H\"{o}lder continuity},  \emph{(ii) Sub-tangent} and \emph{(iii) one-sided Lipschitz} conditions. 

To that end, the authors studied the  map $\mathcal{L}_{\bar{\gamma}, k_*}:[0,\infty) \rightarrow \mathbb{R}$,  defined as $$\mathcal{L}_{\bar{\gamma}, k_*}(x)=- Ax^{1+ \frac{\bar{\gamma}}{k_*}}+  B   x,$$ 
where $A$ and $B$ are positive constants,  $\bar{\gamma} \in (0,1]$ and  $k_*$ as defined in  \eqref{k star}. This map has only one root, denoted with $x^*_{\bar{\gamma}, k_*}$, at which $\mathcal{L}_{\bar{\gamma}, k_*}$ changes from positive to negative. Thus, for any $x\geq0$, they  write
\begin{equation*}
\mathcal{L}_{\bar{\gamma}, k_*}(x) \leq \max_{0\leq x \leq x^*_{\bar{\gamma}, k_*}} \mathcal{L}_{\bar{\gamma}, k_*}(x) =: \mathcal{L}^*_{\bar{\gamma}, k_*}.
\end{equation*}
Then, defining
\begin{equation}\label{c k star}
C_{k_*} := x^*_{\bar{\gamma}, k_*} +  \mathcal{L}^*_{\bar{\gamma}, k_*},
\end{equation}
they were able to write such a region $\Omega$.

\section{Lower bound of the cross section}
\label{section lower bound}
We will state the following Lemma, whose proof can be found in \cite[Appendix]{IG-P-C}.

\begin{lemma}\label{lemma lower bound}
	Let $\gij \in [0,2]$, for any $i, j \in \left\{1,\dots, I \right\}$, and assume \\ $0\leq \left\{ F(t) = \left[ f_1(t) \dots f_I(t) \right]^T \right\}_{t\geq 0} \subset \ldva$ satisfies
	\begin{multline*}
	c \leq \sum_{i=1}^I \int_{\mathbb{R}^3} m_i \, f_i (t, v)  \mathrm{d}v \leq C, \qquad c \leq \sum_{i=1}^I \int_{\mathbb{R}^3} f_i (t, v) m_i \left|v\right|^2 \mathrm{d}v \leq C,\\ \sum_{i=1}^I \int_{\mathbb{R}^3} f_i (t, v) m_i v \mathrm{d}v=0, 
	\end{multline*} 
	for some positive constants $c$ and $C$. Assume also boundedness of the moment
	\begin{equation*}
	\sum_{i=1}^I \int_{\mathbb{R}^3} f_i (t, v) m_i \left|v\right|^{2+\varepsilon}  \mathrm{d}v \leq B, \quad \varepsilon>0. 
	\end{equation*}
	Then, there exists a constant $\clb$  defined as
		\begin{multline}\label{clb}
	\clb
= \frac{c}{2} \ct  \left( 2^{2+\varepsilon} \left(\frac{\max\{C,B\}}{c} \right)\left(1+ \left( \frac{{\color{black} 2} \, C}{\ct \, c} \right)^\frac{2}{\bar{\gamma}}\right)^{\frac{2+\varepsilon}{2}}\right)^{\frac{-2+\underline{\gamma}}{\varepsilon}}
\\ { \color{black} \times \left( 1 + \frac{\max_{1\leq j \leq I} m_j}{\sum_{i=1}^I m_i}  \left( \frac{{\color{black} 2} \, C}{\ct \, c} \right)^2 \right)^{-\bar{\gamma}/2},}
 	\end{multline} such that
	\begin{equation}\label{lower bound}
	\sum_{i=1}^I \int_{\mathbb{R}^3} m_i f_i (t, w) \left|v-w\right|^\gij \md w \geq \clb \japvj^{\bar{\gamma}}, 
	\end{equation}
	for any $j \in \{1,\dots,I\}$, {\color{black} with $\bar{\gamma}=\max_{1 \leq i,j \leq I} \gij$}.
\end{lemma}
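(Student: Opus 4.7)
The strategy is a Povzner-type truncation: the higher-moment bound $\pmdvapF\leq B$ forces the bulk of the mass of $\F$ into a bounded ball, on which the triangle inequality makes $|v-w|^{\gij}$ comparable to a suitable power of $|v|$ when $|v|$ is large. The remaining regime of bounded $|v|$ is then handled by the basic mass and energy bounds combined with $\gij>0$.

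First I would apply Markov's inequality to the $(2+\varepsilon)$-moment, giving
$$\sum_{i=1}^{I} m_i \int_{|w|>R} f_i(t,w)\,\md w \;\leq\; \frac{B}{R^{2+\varepsilon}} \quad \text{for any } R>0,$$
and then combine with the lower mass bound to obtain
$$\sum_{i=1}^{I} m_i \int_{|w|\leq R} f_i(t,w)\,\md w \;\geq\; c - \frac{B}{R^{2+\varepsilon}} \;\geq\; \tfrac{c}{2}$$
as soon as $R \geq R_0 := (2B/c)^{1/(2+\varepsilon)}$. Next, for $|v|\geq 2R_0$ and $|w|\leq R_0$, the triangle inequality yields $|v-w|\geq |v|/2$, so restricting the integration to this good set gives a lower bound of the form $\ct\,(|v|/2)^{\bar{\gamma}}$, provided the $i$-dependence of $\gij$ is absorbed by pairing it with the bound $\underline{\gamma}$ on the smallest exponent in the system and with the size of $R_0$.

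For bounded $|v|$ the quantity $\japvj^{\bar{\gamma}}$ is itself bounded, so a uniform positive lower bound on the integral suffices, and this follows by combining the mass and $(2+\varepsilon)$-moment bounds: the $(2+\varepsilon)$-moment prevents mass from escaping to infinity, and the total mass lower bound prevents all of the mass from concentrating arbitrarily close to $v$, so the hard-potential weight $|v-w|^{\gij}$ contributes a strictly positive amount. Passing at the end from $|v|$ to the mixture bracket $\japvj=\sqrt{1+m_j|v|^2/\sum_k m_k}$ is what produces the outer factor $\bigl(1+\tfrac{\max_j m_j}{\sum_i m_i}(\tfrac{2C}{\ct c})^{2}\bigr)^{-\bar{\gamma}/2}$ that appears in the definition of $\clb$.

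The main obstacle is the precise calibration of the truncation radius $R_0$: it must simultaneously be large enough to absorb the $(2+\varepsilon)$-tail into a small fraction of the total mass (ensuring at least $c/2$ mass remains inside the ball) and yet small enough so that $|v|/2\geq R_0$ still captures the full $|v|^{\bar{\gamma}}$ growth coming from the triangle inequality. Balancing these two constraints while keeping all constants explicit, and lifting the estimate uniformly in $i$ across the spread $\underline{\gamma}\leq \gij\leq \bar{\gamma}$, is what generates the nested exponent structure $(-2+\underline{\gamma})/\varepsilon$, $2/\bar{\gamma}$, and $(2+\varepsilon)/2$ visible in the closed-form expression for $\clb$.
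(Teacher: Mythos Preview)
Your overall architecture is the standard one and matches the argument in the cited reference \cite{IG-P-C} (this paper does not reprove the lemma): truncate via the $(2+\varepsilon)$-moment to trap at least half the mass in a ball of radius $R_0$, use the triangle inequality for $|v|$ large, and argue separately for $|v|$ bounded.

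There is, however, a genuine gap in your bounded-$|v|$ step. You claim that ``the total mass lower bound prevents all of the mass from concentrating arbitrarily close to $v$''. It does not: a single Dirac mass at $v$ satisfies both the upper and lower mass bounds and makes the integral vanish. What rules this out is the \emph{zero-momentum hypothesis together with the strictly positive energy lower bound}, which you never invoke. The quantitative mechanism used in \cite{IG-P-C} is the quadratic expansion
\[
\sum_{i=1}^I \int_{\mathbb{R}^3} m_i f_i(t,w)\,|v-w|^2\,\md w
\;=\; |v|^2\!\sum_{i} \!\int m_i f_i \;-\; 2v\cdot\!\sum_i\!\int m_i f_i\,w \;+\; \sum_i\!\int m_i f_i\,|w|^2
\;\geq\; c\,|v|^2 + c,
\]
valid precisely because the momentum term vanishes and the energy is bounded below by $c$. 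On the truncated region $|w|\leq R_0$ one then uses $|v-w|^{\gij}\geq (|v|+R_0)^{\,\gij-2}\,|v-w|^2\geq (|v|+R_0)^{\,\underline{\gamma}-2}\,|v-w|^2$; combining this with the quadratic lower bound is what produces the exponent $(-2+\underline{\gamma})/\varepsilon$ in the explicit constant, after $R_0$ is chosen in terms of $B$, $c$, and $\varepsilon$. Without the momentum and energy inputs the small-$|v|$ bound simply fails, so your sketch needs to be corrected at this point rather than merely sharpened.
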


\medskip
\medskip

\end{document}